\newtheorem{thm}{Theorem}[section]
\newtheorem{cor}[thm]{Corollary}
\newtheorem{defn}[thm]{Definition}
\newtheorem{lem}[thm]{Lemma}
\newtheorem{prop}[thm]{Proposition}
\newtheorem{exa}[thm]{Example}
\newtheorem{rmk}[thm]{Remark}
\newtheorem{ass}[thm]{Assumption}
\newtheorem{proposition}[thm]{Proposition}
\newtheorem{remark}[thm]{Remark}
\newtheorem{example}[thm]{Example}
\newcommand{\End}{\mathop{\mathrm{End}}}
\numberwithin{equation}{section}
\begin{document}

\title[A new approach to slice analysis via slice topology]{A new approach to slice analysis via slice topology}
\author{Xinyuan Dou}
\email[Xinyuan Dou]{douxy@mail.ustc.edu.cn}
\address{Department of Mathematics, University of Science and Technology of China, Hefei 230026, China}
\author{Ming Jin}
\email[M.~Jin]{jinming@fudan.edu.cn}
\address{School of Mathematical Sciences, Fudan University, Shanghai 200433, People’s Republic of China}
\author{Guangbin Ren}
\email[Guangbin Ren]{rengb@ustc.edu.cn}
\address{Department of Mathematics, University of Science and Technology of China, Hefei 230026, China}
\author{Irene Sabadini}
\email[Irene Sabadini]{irene.sabadini@polimi.it}
\address{Dipartimento di Matematica, Politecnico di Milano, Via Bonardi, 9, 20133 Milano, Italy}
\keywords{Domains of holomorphy; quaternions; slice regular functions; representation formula; slice topology}
\thanks{This work was supported by the NNSF of China (11771412)}

\subjclass[2010]{Primary: 30G35; Secondary: 32A30, 32D05}

\begin{abstract}
In this paper we summarize some known facts on slice topology in the quaternionic case, and we deepen some of them by proving new results and discussing some examples. We then show, following \cite{Dou2020004}, how this setting allows us to generalize slice analysis to the general case of functions with values in a real left alternative algebra, which includes the case of slice monogenic functions with values in Clifford algebra. Moreover, we further extend slice analysis, in one and several variables, to functions with values in a Euclidean space of even dimension. In this framework, we study the domains of slice regularity, we prove some extension properties and the validity of a Taylor expansion for a slice regular function.
\end{abstract}

\maketitle

\section{Introduction}

Quaternions are a kind of hypercomplex numbers first described by Hamilton in 1843. With the development of the theory of holomorphic functions in complex analysis and its generalizations to higher dimensional cases, similar theories for quaternions were established. The most well-known function theory in quaternionic analysis, was initiated by  Gr. Moisil and R. Fueter \cite{Fueter1934001} who, with his school, greatly contributed to the development of the theory. The holomorphy in quaternionic analysis is defined via the so-called Cauchy-Riemann-Fueter equation
\begin{equation*}	\frac{\partial{f}}{\partial{x_1}}+i\frac{\partial{f}}{\partial{x_2}}+j\frac{\partial{f}}{\partial{x_3}}+k\frac{\partial{f}}{\partial x_4}=0,
\end{equation*}
and has been widely studied, see e.g. \cite{MR2089988, MR1096955, MR2369875,Sudbery1979001}.

Since the class of functions considered by Moisil and Fueter does not contain neither the identity function nor any other monomial function  $f(q)=q^n$, C. G. Cullen \cite{Cullen1965001} considered another class, using the notion of intrinsic functions introduced by Rinehart in \cite{Rinehart1960001}.
Inspired by Cullen's approach, in  \cite{Gentili2007001} Gentili and Struppa started the study of a new theory of quaternionic functions which are holomorphic in a suitable sense. This theory includes convergent power series of the form $\sum_{n\in\mathbb{N}}q^n a_n$ and it is nowadays known as slice quaternionic analysis; it has been widely studied in the past fifteen years, see \cite{Gentili2014001,Colombo2010003,Colombo2015001} and also the books \cite{Colombo2011001B,Gentili2013001B,Alpay2016001B}. Slice analysis has been extended to octonions \cite{Gentili2010001}, Clifford algebras \cite{Colombo2009002} and real alternative $*$-algebra \cite{Ghiloni2011001,Ghiloni20171002}. The richness of slice analysis in one variable makes it natural to look for generalization to several variables and in fact the quaternonic case is considered in \cite{Colombo2012002}, the case of Clifford algebras in \cite{Ghiloni2012001}, the one of octonions in \cite{Ren2020001} and finally, the real alternative $*$-algebras case in \cite{Ghiloni2020001}. The most general setting of Euclidean spaces is treated in \cite{Dou2020004} and in this paper.

A fundamental role in slice quaternionic analysis is played by the so-called representation formula, see \cite{Colombo2009Structure,Colombo2009001}. The formula allows to extend many known results in complex analysis to slice quaternionic analysis, e.g. the quaternionic power series expansion \cite{Gentili2012001,Stoppato2012001}, geometric function theory \cite{Ren2017001,Ren2017002,Wang2017001,Xu2018001}, results in quaternionic Schur analysis \cite{Alpay2012001} and in quaternionic operator theory \cite{Alpay2015001,MR3887616,MR3967697}. However, the representation formula just works on axially symmetric s-domains (see \cite{Colombo2009001} for the definition). This causes difficulties while considering more general, non necessarily axially symmetric, domains.

In the recent paper \cite{Dou2020001}, we generalize the representation formula to non-axially symmetric sets.
To this end, a crucial tool is the use of a suitable topology on $\mathbb{H}$, finer than the Euclidean one, called slice topology and denoted by $\tau_s$. In this paper we deepen the study of the slice topology also proving that it is not metrizable.

One can consider functions defined on open sets in this topology, hence enlarging the class of slice regular functions.  It is important to note that, in this framework, the conditions under which analytic continuation for slice regular functions is possible are weaker than the conditions in use with the Euclidean topology. This fact makes it easier the study of analogues of the Riemann domains (Riemann domains over $(\mathbb{H},\tau_s)$) and generalized manifolds over $(\mathbb{H},\tau_s)$. In the slice topology the class of `domains of holomorphy' includes the axially symmetric open sets and also the so-called hyper-$\sigma$-polydiscs that we shall introduce in Section \ref{sc-dsr}.

We then show how the notion of slice regularity can be given for functions with values in a finite-dimensional left alternative algebra, see Section \ref{sc-srla}, and also for functions with values in the Euclidean space $\mathbb R^{2n}$. It is important to note that slice regularity in this paper is meant in the original sense of Gentili and Struppa, see \cite{Gentili2007001} (we shall refer to it as the weak slice regularity) and not in the sense of Ghiloni and Perotti \cite{Ghiloni2020001} (regularity in their sense may also be called strong slice regularity). Moreover, we also consider the several variables case thus giving rise to results of a more general validity than those one available in the literature.
Our approach is rather new and it is spread in various papers, see \cite{Dou2020001, Dou2020003, Dou2020004}, and the main purpose of this work is to give a unified overview of the main ideas, complemented with some other new results and examples.

The paper is organized in seven sections, besides this introduction. In Section 2 we introduce the slice topology in the quaternionic case and we prove some of its properties among which the non-metrizability. In Section 3 we discuss the definition of quaternionic slice regular functions, some main properties and examples. In Section 4 we consider the generalization to the case of functions with values in a  left alternative algebra and defined on the quadratic cone whereas in Section 5 we come to the case of functions with values in a real Euclidean space, even dimensional, and defined on the so-called slice-cones. In this extremely general setting we can prove, in Section 6, an extension result. The domains of slice regularity are discussed in Section 7, while a Taylor formula is proved in Section 8.

\section{Slice topology}
In this section, we define the so called slice topology $\tau_s$ on $\mathbb H$, originally introduced in \cite{Dou2020001}, we state some of its properties and we discuss some examples. We shall show, in particular, that slice topology has some special features near the real axis and this fact has important consequences when considering connectedness.

We recall that the algebra of real quaternions $\mathbb H$ consists of elements of the form $q=x_0+x_1i+x_2j+x_3k$ where the imaginary units $i$, $j$, $k$ satisfy $i^2=j^2=k^2=-1$, $ij=-ji=k$, $jk=-kj=i$, $ki=-ik=j$.

The set of imaginary units in $\mathbb{H}$ is defined by
\begin{equation*}
	\mathbb{S}:=\{q\in\mathbb{H}\ :\ q^2=-1\}.
\end{equation*}

The slice topology that we introduce below is designed around the notion of slice regularity, see Definition \ref{slreg}, and is based on the following:
\begin{defn}\label{df-so}
	A subset $\Omega$ of $\mathbb{H}$ is called slice-open, if
	\begin{equation*}
		\Omega_I:=\Omega\cap\mathbb{C}_I
	\end{equation*}
 is open in $\mathbb{C}_I$ for any $I\in\mathbb{S}$.
\end{defn}
The following result is easily proved with classical arguments:
\begin{lem} The family
	$$\tau_s(\mathbb{H}):=\{\Omega\subset\mathbb{H}:\Omega\ \mbox{is slice-open}\}$$ defines a topology of $\mathbb{H}$.
\end{lem}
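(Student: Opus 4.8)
The plan is to verify directly the three axioms of a topology, exploiting the elementary fact that, for each fixed $I\in\mathbb{S}$, the slicing operation $\Omega\mapsto\Omega_I=\Omega\cap\mathbb{C}_I$ is merely intersection with a fixed subset of $\mathbb{H}$ and therefore commutes with arbitrary unions and with arbitrary (in particular finite) intersections. Throughout, $\mathbb{C}_I$ is given its Euclidean topology, under which $\mathbb{C}_I\cong\mathbb{C}$.

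First I would check the trivial sets: $\emptyset_I=\emptyset$ and $\mathbb{H}_I=\mathbb{C}_I$ are both open in $\mathbb{C}_I$ for every $I\in\mathbb{S}$, so $\emptyset,\mathbb{H}\in\tau_s(\mathbb{H})$. Next, given a family $\{\Omega_\lambda\}_{\lambda\in\Lambda}$ of slice-open sets and any $I\in\mathbb{S}$, one has $\bigl(\bigcup_{\lambda\in\Lambda}\Omega_\lambda\bigr)_I=\bigcup_{\lambda\in\Lambda}(\Omega_\lambda)_I$, which is a union of subsets open in $\mathbb{C}_I$, hence open in $\mathbb{C}_I$; thus $\bigcup_{\lambda\in\Lambda}\Omega_\lambda$ is slice-open. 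Finally, for finitely many slice-open sets $\Omega_1,\dots,\Omega_n$ and any $I\in\mathbb{S}$, $\bigl(\bigcap_{k=1}^n\Omega_k\bigr)_I=\bigcap_{k=1}^n(\Omega_k)_I$ is a finite intersection of subsets open in $\mathbb{C}_I$, hence open in $\mathbb{C}_I$; thus $\bigcap_{k=1}^n\Omega_k$ is slice-open. This establishes all three axioms.

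There is essentially no obstacle: the entire content lies in the observation that Definition \ref{df-so} tests openness one slice at a time, each test being against a genuine topology on $\mathbb{C}_I$. The only point deserving a word of care is that distinct slices $\mathbb{C}_I$ and $\mathbb{C}_J$ overlap along the real axis $\mathbb{R}$; but since the defining condition quantifies separately over all $I\in\mathbb{S}$, no compatibility condition on these overlaps is required, and the argument above goes through verbatim. Equivalently, and more conceptually, one may simply observe that $\tau_s(\mathbb{H})$ is precisely the final topology on $\mathbb{H}$ determined by the family of inclusion maps $\{\mathbb{C}_I\hookrightarrow\mathbb{H}\}_{I\in\mathbb{S}}$, since for an inclusion $\iota_I\colon\mathbb{C}_I\hookrightarrow\mathbb{H}$ one has $\iota_I^{-1}(\Omega)=\Omega_I$; as the final topology induced by any family of maps is automatically a topology, the lemma follows.
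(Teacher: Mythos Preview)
Your proof is correct and is precisely the ``classical argument'' the paper alludes to without spelling out: the paper provides no explicit proof of this lemma, merely remarking that it follows by standard reasoning. Your direct verification of the topology axioms, together with the observation that $\tau_s(\mathbb{H})$ is the final topology for the inclusions $\mathbb{C}_I\hookrightarrow\mathbb{H}$, is exactly what is intended.
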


\begin{defn}
	We call $\tau_s(\mathbb{H})$ the slice topology on $\mathbb{H}$. Open sets, connected sets and paths in the slice topology are called  slice-open sets, { slice-connected sets} and slice-paths. A domain in the slice topology is called {\em slice topology-domain} or, in short, {\em st-domain}.
\end{defn}
A similar terminology will be used for all the other notions in the slice topology. We made an exception and we do not use the term slice-domain to denote a domain in the slice topology, since this notion is already used in the literature to denote something different. In fact a set  $\Omega$ in $\mathbb{H}$ is called {\em classical slice domain}, in short {\em s-domain}, if it is a domain in the Euclidean topology such that $\Omega_\mathbb{R}:=\Omega\cap \mathbb R\neq\varnothing$, and $\Omega_I$  is a domain in $\mathbb{C}_I$ for any $I\in\mathbb{S}$.

It is immediate from the definition that for any $I\in\mathbb{S}$, the subspace topology $\tau_s(\mathbb{C}_I\backslash\mathbb{R})$ of $\tau_s(\mathbb{H})$ coincides the Euclidean topology $\tau(\mathbb{C}_I\backslash\mathbb{R})$. However, $\tau_s$ is quite different from Euclidean topology near $\mathbb{R}$ as  Example 3.2 in \cite{Dou2020001} shows.

The peculiarity of the topology $\tau_s$ near $\mathbb{R}$ appears, in particular, in the notion of connectedness, so we need to introduce another useful notion.

\begin{defn}
Let	$\Omega\subset\mathbb{H}$ and let $\Omega_{\mathbb{R}}:=\Omega\cap\mathbb{R}$. The set $\Omega$ is called real-connected if $\Omega_{\mathbb{R}}\not=\varnothing$ and $\Omega_{\mathbb{R}}$ is connected in $\tau(\mathbb{R})$ or if $\Omega_{\mathbb{R}}=\varnothing$.
\end{defn}

For any $q\in\mathbb{H}\backslash\mathbb{R}$, there is $r\in\mathbb{R}_+$ such that the ball $B_I(q,r)$ in $\mathbb{C}_I$ does not intersect $\mathbb{R}$. Note that for such $r>0$  $B_I(q,r)\cap\mathbb{R}=\varnothing$ is connected in $\mathbb{R}$. It is then clear that the slice topology has a basis of real-connected sets at any point outside $\mathbb{R}$, and the following result implies that slice topology has a real-connected basis also near $\mathbb{R}$.

\begin{prop}\label{pr-sor}
	For any slice-open set $\Omega$ in $\mathbb{H}$ and $q\in\Omega$, there is a real-connected st-domain $U\subset\Omega$ containing $q$.
\end{prop}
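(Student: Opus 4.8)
The plan is to treat the cases $q\in\mathbb H\setminus\mathbb R$ and $q\in\mathbb R$ separately; the first is essentially the observation recalled just before the statement, so the real content lies in the second.

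For $q\notin\mathbb R$, choose $I\in\mathbb S$ with $q\in\mathbb C_I$ and $r>0$ so small that $B_I(q,r)\subset\Omega_I$ and $B_I(q,r)\cap\mathbb R=\emptyset$. Then $U:=B_I(q,r)$ works: $U\subset\Omega$; $U$ is slice-open, since $U\cap\mathbb C_J$ equals $U$ for $J=\pm I$ and is contained in $U\cap\mathbb R=\emptyset$ otherwise; $U$ is real-connected as $U_{\mathbb R}=\emptyset$; and $U$ is slice-connected because the subspace slice topology on $\mathbb C_I$ is coarser than the Euclidean one (each slice-open set meets $\mathbb C_I$ in a Euclidean-open set), so a Euclidean disc remains connected in it.

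For $q\in\mathbb R$ I would first fix a symmetric real interval about $q$ inside $\Omega$: since $\Omega_{\mathbb R}=\Omega_I\cap\mathbb R$ is open in $\mathbb R$, there is $\varepsilon>0$ with $L:=(q-\varepsilon,q+\varepsilon)\subset\Omega$. Then, for every $I\in\mathbb S$ and every $x\in L$, pick $r_{I,x}>0$ small enough that $B_I(x,r_{I,x})\subset\Omega_I$ and $B_I(x,r_{I,x})\cap\mathbb R\subset L$ (possible since $\Omega_I$ is open and $x$ is interior to $L$ in $\mathbb R$), and set
\[
U_I:=\bigcup_{x\in L}B_I(x,r_{I,x})\subset\mathbb C_I,\qquad U:=\bigcup_{I\in\mathbb S}U_I.
\]
The crux is the identity $U_I\cap\mathbb R=L$ for every $I$, where ``$\supseteq$'' holds because $x\in B_I(x,r_{I,x})$ and ``$\subseteq$'' is the second constraint on the radii. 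Granting this, for any $J\in\mathbb S$ the set $U\cap\mathbb C_J$ is the union of $U_J$, $U_{-J}$ and sets $U_I\cap\mathbb R\subset L\subset U_J$ (for $I\ne\pm J$), hence equals $U_J\cup U_{-J}$, which is open in $\mathbb C_J$; so $U$ is slice-open. Moreover $U\cap\mathbb R=\bigcup_I(U_I\cap\mathbb R)=L$ is connected, so $U$ is real-connected; and each $U_I$ is a union of Euclidean discs all meeting the connected set $L$, hence Euclidean-connected, hence slice-connected (as in the first case), so their union $U$ is slice-connected since all the $U_I$ contain $q$. Thus $U\subset\Omega$ is the desired real-connected st-domain through $q$.

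I expect the only real pitfall to be the temptation to take an ``obvious'' candidate such as the slice-connected component of $\Omega$ at $q$, or a Euclidean component of each $\Omega_I$ through $q$: these may fail to be real-connected, because a connected open subset of $\mathbb C_I$ can meet $\mathbb R$ in a disconnected set. The construction above sidesteps this by assembling $U$ slice by slice with the disc radii truncated so that each $U_I$ touches $\mathbb R$ only along $L$ — which is exactly what allows the different slices to glue into a slice-open set without enlarging the real trace beyond $L$.
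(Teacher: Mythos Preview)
Your proof is correct. Both cases are handled cleanly; in particular, the key observation that the subspace slice topology on $\mathbb{C}_I$ is coarser than the Euclidean one (since every slice-open set meets $\mathbb{C}_I$ in a Euclidean-open set) is exactly what makes Euclidean-connectedness of each $U_I$ transfer to slice-connectedness, and the radius truncation forcing $U_I\cap\mathbb{R}=L$ is precisely what guarantees real-connectedness of the assembled $U$.

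Your route, however, is genuinely different from the paper's. The paper does not build $U$ slice by slice from balls; instead it takes, in one stroke, the slice-connected component of $(\Omega\setminus\Omega_{\mathbb{R}})\cup A$ containing $q$, where $A$ is the connected component of $\Omega_{\mathbb{R}}$ through $q$ when $q\in\mathbb{R}$ and $A=\emptyset$ otherwise. The point is that deleting all of $\Omega_{\mathbb{R}}$ except the single real component $A$ forces the real trace of any slice-connected piece to lie in $A$, so real-connectedness is automatic; one must then only check that $(\Omega\setminus\Omega_{\mathbb{R}})\cup A$ is itself slice-open and that slice-connected components of slice-open sets are st-domains. This yields a canonical (and in general larger) $U$ with no choices of radii involved. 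Your construction, by contrast, is more hands-on and self-contained: it avoids any appeal to properties of slice-connected components, at the price of producing a smaller, non-canonical neighbourhood. Either approach suffices, and your closing remark about the pitfall of naively taking the full slice-connected component of $\Omega$ is exactly the obstacle the paper's deletion of $\Omega_{\mathbb{R}}\setminus A$ is designed to circumvent.
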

It is useful to explain that such a $U$ can be constructed as the slice-connected component of the set
$$(\Omega\backslash\Omega_{\mathbb{R}})\cup A$$ containing $q$.
 Here when
    $q\in\mathbb{R}$,  we take $A$ to be the connected component of $\Omega_{\mathbb{R}}$ containing $q$ in $\mathbb{R}$ and when $q\not\in\mathbb{R}$ we set $A:=\varnothing$.

Now we describe slice-connectedness of real-connected slice-domains by using suitable slice-paths.

\begin{defn}
	A path $\gamma$ in $(\mathbb{H},\tau)$ is called on a slice, if $\gamma\subset\mathbb{C}_I$ for some $I\in\mathbb{S}$.
\end{defn}

One can prove, see \cite[Proposition 3.6]{Dou2020001}, that any path on a slice is a slice-path. Moreover we have, see \cite[Proposition 3.8]{Dou2020001}:

\begin{prop}\label{pr-rcsd}
	For each real-connected st-domain $U$, the following assertions hold:
	\begin{enumerate}[\upshape (i)]
		
		\item If $U_\mathbb{R}=\varnothing$, then $U\subset\mathbb{C}_I$ for some $I\in\mathbb{S}$.
		
		\item If $U_\mathbb{R}\neq\varnothing$, then for each $q\in U$ and $x\in U_\mathbb{R}$, there is a path on a slice from $q$ to $x$.
		
		\item $U_I$ is a domain in $\mathbb{C}_I$ for each $I\in\mathbb{S}$.
		
		\item For each $p,q\in U$, there are two paths on a slice $\gamma_1,\gamma_2$ in $U$ such that $\gamma_1(1)=\gamma_2(0)$ and $\gamma_1\gamma_2$ is a slice-path from $p$ to $q$.
		
	\end{enumerate}
\end{prop}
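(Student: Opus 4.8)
The plan is to prove the four assertions in order, leaning on the structural description of real-connected st-domains and on the fact (quoted above from \cite[Proposition 3.6]{Dou2020001}) that every path lying on a single slice $\mathbb{C}_I$ is a slice-path. For (i): if $U_\mathbb{R}=\emptyset$, then $U\subset\mathbb{H}\setminus\mathbb{R}=\bigsqcup_{I\in\mathbb{S}^+}(\mathbb{C}_I\setminus\mathbb{R})$, where $\mathbb{S}^+$ picks one imaginary unit from each pair $\pm I$. Since on $\mathbb{H}\setminus\mathbb{R}$ the slice topology agrees with the Euclidean topology, each $\mathbb{C}_I\setminus\mathbb{R}$ is slice-open, so this is a partition of $U$ into relatively slice-open pieces. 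A slice-domain is slice-connected, hence must be contained in exactly one such piece, giving $U\subset\mathbb{C}_I$ for a single $I$.

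For (ii) I would argue via the first-countable--type local structure near $\mathbb{R}$ together with slice-path-connectedness. Fix $x\in U_\mathbb{R}$ and consider the set $V$ of all points $q\in U$ that can be joined to $x$ by a path on a slice inside $U$. I claim $V$ is both slice-open and slice-closed in $U$; since $U$ is slice-connected and $x\in V\neq\emptyset$, this forces $V=U$. Openness: given $q\in V$ reached by a path on $\mathbb{C}_I$, any point of $U$ in a small $\mathbb{C}_I$-ball around $q$ is reachable by extending that path within $\mathbb{C}_I$ — and for $q\notin\mathbb{R}$ such a ball sits inside $U_I$ which is open in $\mathbb{C}_I$, while for $q\in\mathbb{R}$ one first moves within $U_\mathbb{R}$ (a real interval, since $U$ is real-connected) and then into any slice. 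Closedness of $V$ in $U$ is the mirror argument: a point $q$ in the slice-closure of $V$ has every slice-neighbourhood meeting $V$, and picking a point $q'\in V\cap\mathbb{C}_{I}$-ball of $q$ lets us prepend a short path on $\mathbb{C}_I$ (or first slide along $\mathbb{R}$) to a path-on-a-slice-chain from $q'$ to $x$; some care is needed because two different short paths may lie on different slices, but they can always be routed through a common real point of $U_\mathbb{R}$.

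For (iii): fix $I\in\mathbb{S}$. Openness of $U_I$ in $\mathbb{C}_I$ is immediate from the definition of slice-open. For connectedness, if $U_\mathbb{R}=\emptyset$ this is part (i) combined with the fact that $U_J=\emptyset$ for $J\neq\pm I$ and $U_I=U$ is slice-connected, hence connected (the slice topology on $\mathbb{C}_I\setminus\mathbb{R}$ is the Euclidean one). If $U_\mathbb{R}\neq\emptyset$, pick any $p,q\in U_I$ and any $x\in U_\mathbb{R}\subset U_I$; by (ii) there are paths on a slice in $U$ from $p$ to $x$ and from $q$ to $x$. A priori these could lie on slices other than $\mathbb{C}_I$, so they need not be paths in $U_I$; here I would refine the argument of (ii) to note that a point of $U_I$ can be joined to a point of $U_\mathbb{R}$ by a path lying on $\mathbb{C}_I$ itself (take the straight segment in $\mathbb{C}_I$ from $p$ toward the real axis, staying in $U_I$ by a connectedness/openness argument inside the planar open set $U_I$, or invoke ordinary path-connectedness of the Euclidean-open set $U_I$ once one knows $U_I$ is connected — so really (iii) and (ii) should be proved together by a single induction on the local structure). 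Concatenating the $\mathbb{C}_I$-path from $p$ to $x$ with the reverse of the one from $q$ to $x$ gives a path in $U_I$ from $p$ to $q$, so $U_I$ is path-connected, hence connected.

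For (iv): if $U_\mathbb{R}=\emptyset$ then by (i) $U\subset\mathbb{C}_I$ and $U=U_I$ is a domain by (iii), so $p$ and $q$ are joined by a single path on the slice $\mathbb{C}_I$; split it at any interior point to get $\gamma_1,\gamma_2$. If $U_\mathbb{R}\neq\emptyset$, apply (ii) to get a path on a slice $\alpha$ from $p$ to some $x\in U_\mathbb{R}$ and a path on a slice $\beta$ from $q$ to the same $x$; then $\gamma_1:=\alpha$ and $\gamma_2:=\overline{\beta}$ (the reversal of $\beta$) satisfy $\gamma_1(1)=x=\gamma_2(0)$ and $\gamma_1\gamma_2$ is a slice-path from $p$ to $q$, since a concatenation of two paths on (possibly different) slices is still a slice-path by \cite[Proposition 3.6]{Dou2020001} applied to each piece together with the gluing lemma for paths.

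The main obstacle is the careful handling of points on $\mathbb{R}$ in (ii) and (iii): near a real point the slice topology is genuinely non-Euclidean, so one cannot simply take a small Euclidean ball and connect inside it. The key device is that $U$ is real-connected, so $U_\mathbb{R}$ is an interval of $\mathbb{R}$, and one can always slide along this interval and then exit into whichever slice is needed; making the open/closed dichotomy rigorous for the set $V$ of slice-path-reachable points, uniformly in the presence of real points, is where the real work lies. I would in fact organize (ii) and (iii) as a single lemma — ``$U_I$ is a connected open subset of $\mathbb{C}_I$ and every point of $U$ is joined to every point of $U_\mathbb{R}$ by a path on a slice'' — proved by the connectedness (open--closed) argument sketched above, and then deduce (iv) formally.
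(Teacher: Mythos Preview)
The paper itself does not prove this proposition; it merely cites \cite[Proposition~3.8]{Dou2020001}. So there is no in-paper argument to compare against, and I will assess your plan on its own terms.

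Your treatment of (i) and (iv) is fine. The soft spot is the open--closed argument for (ii), which you yourself flag as ``where the real work lies''. The difficulty you hit --- showing that the set $V$ of points reachable from a fixed $x\in U_\mathbb{R}$ by a path on a slice is slice-open near real points --- is genuine, and your sketch (``move within $U_\mathbb{R}$ and then into any slice'') does not actually manufacture a slice-open neighbourhood of a real point. The closedness half has a related snag: if $q'\in V$ is slice-close to $q$ and reaches $x$ by a path on $\mathbb{C}_J$, prepending a short $\mathbb{C}_I$-segment from $q$ to $q'$ does not in general give a path on a \emph{single} slice, which is what (ii) demands. Your proposal for (iii) then leans on (ii) in a way you correctly identify as circular, and the suggested escape (``take the straight segment toward the real axis'') can leave $U_I$.

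The clean repair is to test slice-openness \emph{slice by slice} rather than point by point. By definition $V$ is slice-open iff $V_I:=V\cap\mathbb{C}_I$ is open in $\mathbb{C}_I$ for each $I$. For $q\in U_I\setminus\mathbb{R}$ the connecting path in the definition of $V$ is forced to lie on $\mathbb{C}_I$, while every point of $U_\mathbb{R}$ lies in $V$ trivially; using that $U_\mathbb{R}$ is connected, one checks that $V_I$ is exactly the connected component of $U_I$ containing $U_\mathbb{R}$, hence open. Likewise $(U\setminus V)_I$ is the union of the remaining components of $U_I$, also open. Thus $U=V\sqcup(U\setminus V)$ is a slice-open partition and slice-connectedness gives $V=U$; this proves (ii) and (iii) simultaneously. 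Equivalently, one can prove (iii) first by contradiction: if $U_I=A\sqcup B$ with $A,B$ nonempty open and $U_\mathbb{R}\subset A$, then $B\subset\mathbb{C}_I\setminus\mathbb{R}$ satisfies $B\cap\mathbb{C}_J=\emptyset$ for $J\neq\pm I$, so both $B$ and $U\setminus B$ are slice-open, contradicting slice-connectedness of $U$; (ii) then follows from (iii) by ordinary path-connectedness of the planar domain $U_I$.
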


By Proposition \ref{pr-sor} and \ref{pr-rcsd} (iv), we deduce the next important result:

\begin{prop}
	The topological space $(\mathbb{H},\tau_s)$ is connected, local path-connected and path-connected.
\end{prop}

\begin{cor}\label{co-dsd}
	A set $\Omega\subset\mathbb{H}$ is an st-domain if $\Omega_\mathbb{R}\neq\varnothing$ and $\Omega_I$ is a domain in $\mathbb{C}_I$ for any $I\in\mathbb{S}$.
\end{cor}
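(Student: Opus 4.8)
The plan is to verify directly the three requirements defining an st-domain: that $\Omega$ is nonempty, slice-open, and slice-connected. The first two are immediate from the hypotheses. Since $\Omega_I$ is a domain in $\mathbb{C}_I$ for every $I\in\mathbb{S}$, in particular $\Omega_I$ is open in $\mathbb{C}_I$, so Definition \ref{df-so} gives at once $\Omega\in\tau_s(\mathbb{H})$; and $\emptyset\neq\Omega_\mathbb{R}\subset\Omega$. The one observation worth recording before moving on is that, because $\mathbb{R}\subset\mathbb{C}_I$ for every $I$, we have $\Omega_\mathbb{R}=\Omega\cap\mathbb{R}=(\Omega\cap\mathbb{C}_I)\cap\mathbb{R}=\Omega_I\cap\mathbb{R}$; in particular the nonempty set $\Omega_\mathbb{R}$ is contained in $\Omega_I$ for \emph{every} $I\in\mathbb{S}$.

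The substantive point is slice-connectedness, which I intend to obtain by showing that $\Omega$ is actually path-connected in $\tau_s$. Fix once and for all a base point $x_0\in\Omega_\mathbb{R}$. Given an arbitrary $p\in\Omega$, choose $I\in\mathbb{S}$ with $p\in\mathbb{C}_I$ (any $I$ will do when $p\in\mathbb{R}$); then both $p$ and $x_0$ lie in $\Omega_I$, which by hypothesis is a domain in $\mathbb{C}_I$, hence path-connected, so there is a path in $\Omega_I\subset\mathbb{C}_I$ from $p$ to $x_0$. By the fact recalled above (see \cite[Proposition 3.6]{Dou2020001}) a path on a slice is automatically a slice-path, so this is a slice-path contained in $\Omega$. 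Concatenating such a path from $p$ to $x_0$ with the reverse of such a path from $q$ to $x_0$ produces a slice-path in $\Omega$ joining any two prescribed points $p,q\in\Omega$; hence $(\Omega,\tau_s|_\Omega)$ is path-connected, therefore connected, and $\Omega$ is an st-domain.

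I do not foresee a real obstacle: the only items needing a word of care are the inclusion $\Omega_\mathbb{R}\subset\Omega_I$ for all $I$ (so that the single real base point is simultaneously available on every slice, which is what makes the argument go through even though $\Omega$ is \emph{not} assumed real-connected) and the triviality that a concatenation of slice-paths is again a slice-path — immediate, since a slice-path is by definition nothing more than a path in the topological space $(\mathbb{H},\tau_s)$. It is worth remarking, in contrast with Proposition \ref{pr-rcsd}(iii), that only the stated implication holds: an st-domain with nonempty real part need not have connected real part, nor need all of its slices $\Omega_I$ be connected, so the corollary cannot be upgraded to an equivalence without the real-connectedness hypothesis.
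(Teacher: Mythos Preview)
Your proof is correct and follows essentially the same idea the paper has in mind: slice-openness is immediate from the hypothesis, and slice-connectedness is obtained by routing every point through a fixed real base point along a path on a slice (which, as you cite, is automatically a slice-path). The paper does not spell out a proof of the corollary---it is stated as a direct consequence of the preceding material---but your argument is exactly the natural one; your explicit observation that $\Omega_\mathbb{R}\subset\Omega_I$ for \emph{every} $I$ is the key point, and your closing remark that the implication cannot be reversed without real-connectedness is a useful clarification.
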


	By Corollary \ref{co-dsd}, any s-domain is an st-domain. Therefore  the notion of st-domain is a generalization of the notion of s-domain. However, the converse is not true since not every st-domain $\Omega$ is an s-domain, not even when $\Omega$ is a domain in $\mathbb{H}$, as Example 3.13 in \cite{Dou2020001} shows. In that example we consider the set
	\begin{equation*}
		\Omega:=\mathbb B(0,2)\cup \mathbb B(6,2)\cup \left\{q\in\mathbb{H}:\mbox{dist}(q-I,[0,6])<\frac 12\right\},
	\end{equation*}
which is a domain in $\tau(\mathbb{H})$.
Let us fix
 $I\in\mathbb{S}$
	and let $J\in\mathbb{S}$ be such that $J\bot I$. Then
	\begin{equation*}
		\Omega_J=\mathbb B_J(0,2)\cup\mathbb B_J(6,2)
	\end{equation*}
	 is not connected in $\tau(\mathbb{C}_J)$, and as a consequence, $\Omega$ is not an s-domain. However, $\Omega$ is slice-connected, because any point in $\Omega$ can be connected to $0$ or $6$ by a path in a slice, and $0$ can be connected to $6$ by a path in $\mathbb{C}_I$. So $\Omega$ is an st-domain since $\Omega_J$ is open in $\mathbb{C}_J$ for any $J\in\mathbb{S}$.

We note that in the quaternionic case, besides the Euclidean and the slice topology, we can also consider the topology introduced in \cite{Gentili2012001} and based on the so-called $\sigma$-distance which is
defined by
\begin{equation*}
	\sigma(q,p):=\left\{\begin{aligned}
		&|q-p|,&&\qquad\exists\ I\in\mathbb{S},\ s.t.\ p,q\in\mathbb{C}_I,
		\\&\sqrt{(Re(q-p))^2+|Im(q)|^2+|Im(p)|^2},&&\qquad {\rm otherwise}.
	\end{aligned}\right.
\end{equation*}
The $\sigma$-balls centered at  $q\in\mathbb{H}$, namely the balls according to the $\sigma$-distance, are defined by:
\begin{equation*}
	\Sigma(p,r):=\{q\in\mathbb{H}:\sigma(p,q)<r\}.
\end{equation*}
The slice topology is finer than the topology $\tau_\sigma$ induced by the  $\sigma$-distance. In fact, the relations between {
the topologies} $\tau, \tau_\sigma,\tau_s$ is described in the next result:
\begin{prop}
The topologies $\tau, \tau_\sigma,\tau_s$ are such that
$$
\tau\subsetneq \tau_\sigma\subsetneq\tau_s.
$$
\end{prop}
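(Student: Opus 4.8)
The plan is to prove the two inclusions $\tau\subseteq\tau_\sigma$ and $\tau_\sigma\subseteq\tau_s$ separately by short neighborhood arguments, and then to exhibit for each a concrete open set witnessing strictness. Everything rests on two elementary remarks about $\sigma$: (a) two points of a single slice $\mathbb C_I$ automatically lie on a common slice, so $\sigma(q,p)=|q-p|$ whenever $q,p\in\mathbb C_I$; in particular $\Sigma(q,\delta)\cap\mathbb C_I=\mathbb B_I(q,\delta)$ for $q\in\mathbb C_I$, and $\sigma(0,p)=|p|$ for every $p$ since $0\in\mathbb R$ lies in every slice; and (b) in the ``otherwise'' branch of the definition one has $\sigma(q,p)^2\ge|\mathrm{Im}(q)|^2$, so a $\sigma$-ball of radius $<|\mathrm{Im}(q)|$ about a non-real $q$ never meets that branch.

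For $\tau\subseteq\tau_\sigma$: let $\Omega$ be Euclidean-open and $q\in\Omega$, and fix $\epsilon>0$ with $\mathbb B(q,\epsilon)\subseteq\Omega$. If $q\in\mathbb R$ then $\Sigma(q,\epsilon)=\mathbb B(q,\epsilon)\subseteq\Omega$ by (a). If $q\notin\mathbb R$, put $\delta=\min\{\epsilon,|\mathrm{Im}(q)|\}>0$; any $p$ with $\sigma(q,p)<\delta$ cannot lie in the ``otherwise'' branch by (b), hence $|q-p|=\sigma(q,p)<\delta\le\epsilon$ and $p\in\Omega$, so $\Sigma(q,\delta)\subseteq\Omega$. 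Thus $\Omega$ is a $\tau_\sigma$-neighborhood of each of its points, i.e. $\Omega\in\tau_\sigma$. For $\tau_\sigma\subseteq\tau_s$: it suffices to check that a $\tau_\sigma$-open $\Omega$ is slice-open. Given $I\in\mathbb S$ and $q\in\Omega_I$, choose $\delta>0$ with $\Sigma(q,\delta)\subseteq\Omega$; by (a), $\Sigma(q,\delta)\cap\mathbb C_I=\mathbb B_I(q,\delta)\subseteq\Omega_I$, so $\Omega_I$ is open in $\mathbb C_I$. (Equivalently: by (a) and the first inclusion, $\tau_\sigma$ restricts to the Euclidean topology on every slice, and $\tau_s$ is, by definition, the finest topology on $\mathbb H$ with that property, so $\tau_\sigma\subseteq\tau_s$.)

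It remains to show both inclusions are strict. For $\tau\subsetneq\tau_\sigma$, consider the $\sigma$-ball $\Sigma(i,\tfrac12)$. If $p\notin\mathbb C_i$ then $p$ is not on a common slice with $i$ (since $i\notin\mathbb R$ forces the only slice through $i$ to be $\mathbb C_i$), so $\sigma(i,p)\ge|\mathrm{Im}(i)|=1>\tfrac12$; hence $\Sigma(i,\tfrac12)=\mathbb B_i(i,\tfrac12)$ is a nonempty subset of the proper affine $2$-plane $\mathbb C_i\subset\mathbb H$, therefore $\tau_\sigma$-open but with empty Euclidean interior, so not in $\tau$. For $\tau_\sigma\subsetneq\tau_s$, pick pairwise distinct $I_n\in\mathbb S$, $n\ge2$, with $I_n\ne-I_m$ for all $n,m$ (for instance $I_n=\cos\theta_n\,i+\sin\theta_n\,j$ with $\theta_n\in(0,\pi/2)$ distinct), and set $\Omega:=\mathbb H\setminus\{\tfrac1n I_n:n\ge2\}$. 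Since the slices $\mathbb C_{I_n}$ are then pairwise distinct, each slice $\mathbb C_J$ contains at most one of the removed points, so $\Omega_J$ is either $\mathbb C_J$ or $\mathbb C_J$ minus one point; in both cases it is open in $\mathbb C_J$, hence $\Omega\in\tau_s$. But $0\in\Omega\cap\mathbb R$ while every Euclidean ball about $0$ contains some $\tfrac1n I_n\notin\Omega$; since $\Sigma(0,\delta)=\mathbb B(0,\delta)$ for all $\delta$ by (a), no $\tau_\sigma$-neighborhood of $0$ lies in $\Omega$, so $\Omega\notin\tau_\sigma$. (Alternatively one could contrast the metrizability of $\tau_\sigma$ with the non-metrizability of $\tau_s$.)

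The routine inclusions and the first strictness example are straightforward; the point that needs care is the second strictness example, where one must genuinely verify that $\Omega=\mathbb H\setminus\{\tfrac1n I_n\}$ is slice-open — this is exactly where the choice of pairwise non-antipodal $I_n$ is used, ensuring no slice carries more than one removed point — and one must use, from the definition of $\tau_\sigma$ in \cite{Gentili2012001}, that the $\sigma$-balls form a neighborhood basis, which is immediate once $\sigma$ is taken to be a genuine distance. No input beyond remarks (a) and (b) and these observations seems to be required.
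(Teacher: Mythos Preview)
Your proof is correct. The inclusions $\tau\subseteq\tau_\sigma\subseteq\tau_s$ and the first strictness are handled essentially as in the paper (which simply declares these ``immediate''; you have supplied the details the paper omits).

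Where you genuinely diverge is in the witness for $\tau_\sigma\subsetneq\tau_s$. The paper builds a slice-open set $\Omega=\bigcup_{J\in\mathbb S}\Omega_J$ where each $\Omega_J$ is an ellipse in $\mathbb C_J$ whose minor semi-axis is $\mathrm{dist}(J,\mathbb C_I)^{1/2}$ for a fixed $I$; as $J\to I$ these ellipses collapse, forcing $\sigma(0,\mathbb H\setminus\Omega)=0$. Your example instead removes a countable set $\{\tfrac1n I_n\}$ from $\mathbb H$, with the $I_n$ pairwise non-antipodal so that no slice loses more than one point. Both constructions exploit the same phenomenon---at a real point, $\sigma$-balls coincide with Euclidean balls, while slice-openness only looks at one slice at a time---but your version is more elementary: it avoids any continuity or limit computation and makes the ``one obstruction per slice'' mechanism completely transparent. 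The paper's construction, on the other hand, produces a \emph{connected} slice-open set and is recycled verbatim in the next proposition to show non-metrizability of $\tau_s$; your removed-points example would need adaptation for that purpose. Your parenthetical alternative (metrizable versus non-metrizable) is also valid, though in the paper's ordering it would be a forward reference.
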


\begin{proof} The inclusion $\tau\subsetneq \tau_\sigma$ is immediate. It is { also immediate} to check that $\tau_\sigma\subset\tau_s$. To prove that the second inclusion is strict, we need to show a set $\Omega\in\tau_s\backslash\tau_\sigma$. To construct such a set we fix $I\in\mathbb{S}$ and we consider the slice-open set  $\Omega$ in $\mathbb H$ defined by
	\begin{equation}\label{eq-ob}
		\Omega:=\bigcup_{J\in\mathbb{S}}\Omega_J,
	\end{equation}
	where
	\begin{equation*}
		\Omega_J:=\left\{
		\begin{aligned}
			&\{x+yJ\in\mathbb{C}_J:x^2+\frac{y^2}{\mbox{dist}(J,\mathbb{C}_I)}<1\},\qquad&&J\neq\pm I,
			\\&\{x+yJ\in\mathbb{C}_J:x^2+y^2<1\}, &&J=\pm I,
		\end{aligned}\right.
	\end{equation*}
	and $\mbox{dist}(J,\mathbb{C}_I)$ is the Euclidean distance in $\mathbb{H}$ from $J$ to $\mathbb{C}_I$.
The set $\Omega$ is evidently a slice-open set. Since $\mathbb{H}\backslash\Omega\supset\mathbb{C}_J\backslash\Omega_J$, we have
\begin{equation}\label{eq-s0}
	\sigma(0,\mathbb{H}\backslash\Omega)\le \sigma(0,\mathbb{C}_J\backslash\Omega_J)= \mbox{dist}_{\mathbb{C}_J}(0,\mathbb{C}_J\backslash\Omega_J),
\end{equation}
where $\mbox{dist}_{\mathbb{C}_J}(0,\mathbb{C}_J\backslash\Omega_J)$ is the Euclidean distance in $\mathbb{C}_J$ between $0$ and $\mathbb{C}_J\backslash\Omega_J$. Since
\begin{equation}\label{eq-lj}
	\lim_{J\rightarrow I, J\neq I}\left[\mbox{dist}_{\mathbb{C}_J}(0,\mathbb{C}_J\backslash\Omega_J)\right]=0,
\end{equation}
by \eqref{eq-s0} and \eqref{eq-lj}, we have $$\sigma(0,\mathbb{H}\backslash\Omega)=0.$$ Hence, $0$ is not an interior point in $\Omega$ in the topology $\tau_\sigma$. We deduce that $\Omega$ is not open in $\tau_\sigma$.
\end{proof}

Another peculiarity of the slice topology is described in the next proposition:
\begin{proposition}
The topology $\tau_s$ is not a metrizable topology.
\end{proposition}
\begin{proof}
Suppose that $\tau_s$ is induced by a metric $d_s$. Then the set
	\begin{equation*}
		\mathcal{O}:=\{q\in\mathbb{H}:d_s(q,0)<1\}
	\end{equation*}
	is a slice-open set in $\tau_s$. For any $\epsilon>0$ we introduce the set
	\begin{equation}
\mathbb{S}_\epsilon:=\{I\in\mathbb{S}:B_I(0,\epsilon)\subset\mathcal{O}\},
	\end{equation}
and we denote by $\left|\mathbb{S}_\epsilon\right|$ its cardinality.
We claim that $\left|\mathbb{S}_\epsilon\right|$ is finite.
	Assume the contrary. Then there are $J_1,...,J_k,...\in \mathbb{S}_\epsilon$ with $J_\imath\neq\pm J_\jmath$. Set
	\begin{equation*}
		U[I]=\begin{cases}
			\left\{x+yJ_k:{x^2}+\frac{y^2}{(1/k^2)}<1\right\},& I=\pm J_k,\mbox{ for some } k\in\mathbb{N}_+,
			\\\left\{x+yI:{x^2}+{y^2}<1\right\},& \mbox{otherwise}.
		\end{cases}
	\end{equation*}
	The set
	\begin{equation*}
		U:=\bigcup_{I\in\mathbb{S}} U[I]
	\end{equation*}
	is a slice-open set. However
	\begin{equation*}
		d_s(0,\mathbb{H}\backslash U)\le \lim_{k\rightarrow +\infty} d_s(0,\mathbb{C}_{J_k}\backslash U[J_k])
		=\lim_{k\rightarrow +\infty}\frac{1}{k}=0,
	\end{equation*}
	so $0$ is not an interior point of $U$ in the slice topology, and $U$ is not slice-open, which is a contradiction. Hence $\left|\mathbb{S}_\epsilon\right|$ is finite.
	
	On the other hand, for each $I\in\mathbb{S}$,
	\begin{equation*}
		\mbox{dist}_{\mathbb{C}_I}(0,\mathbb{H}\backslash \mathcal{O}_I)>0,
	\end{equation*}
	since ${\mathcal{O}_I}$ is open in $\mathbb{C}_I$. Then $I\in\mathbb{S}_{\frac{1}{n}}$ for some $n\in\mathbb{N}_+$ and
	\begin{equation*}
		\mathbb{S}=\bigcup_{k\in\mathbb{N}_+}\mathbb{S}_{\frac{1}{k}}.
	\end{equation*}
	Since the cardinality of $\mathbb{S}_{\frac{1}{k}}$ is finite, we deduce that $\mathbb{S}$ is a countable set, which is absurd. Hence $\tau_s$ cannot be induced by any metric.
\end{proof}

\section{Main results and examples}\label{sc-mr}

The definition of slice regular functions recalled below is well known since \cite{Gentili2006001}, but the novelty here is that we work with the slice topology and the functions are defined on slice-open sets in $\mathbb{H}$. We present some results below, whose proof is given in \cite{Dou2020001}, and we discuss some examples that further clarify how our approach with the slice topology gives a richer function theory and allows more general situations.

\begin{defn}\label{slreg}
	Let $\Omega$ be a slice-open set in $\mathbb{H}$. A function $f:\Omega\rightarrow\mathbb{H}$ is called (left) slice regular, if $f_I:=f|_{\Omega_I}$ is left holomorphic for any $I\in\mathbb{S}$, i.e.
	if $f$ is real differentiable and satisfies
	\begin{equation*}
		\frac{1}{2}\left(\frac{\partial}{\partial x}+I \frac{\partial}{\partial y}\right) f(x+yI)=0\qquad\mbox{on}\qquad{\Omega_I},
	\end{equation*}
	where
	\begin{equation*}
		\Omega_I:=\Omega\cap\mathbb{C}_I.
	\end{equation*}
\end{defn}

The following result is known as Splitting Lemma and it is based on writing the values of a quaternionic function by means of two complex-valued functions, and for this reason the result holds also in this framework:

\begin{lem}
	(Splitting Lemma)
	Let $\Omega\in\tau_s(\mathbb{H})$. A function $f:\Omega\rightarrow\mathbb{H}$ is slice regular, if and only if for all $I,J\in\mathbb{S}$ with $I\bot J$, there are two $\mathbb{C}_I$-valued holomorphic functions $F,G: \Omega_I\rightarrow\mathbb{C}_I$ such that $f_I=F+GJ$.
\end{lem}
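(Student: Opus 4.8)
The plan is to reduce the statement to the classical Splitting Lemma carried out on each slice $\mathbb{C}_I$, the only place where the slice-topology hypothesis enters being that $\Omega_I$ is an \emph{open} subset of $\mathbb{C}_I$, which is precisely Definition \ref{df-so}. Fix $I,J\in\mathbb{S}$ with $I\bot J$. The elements $1,I,J,IJ$ form a real basis of $\mathbb{H}$, so $\mathbb{H}=\mathbb{C}_I\oplus\mathbb{C}_IJ$ as a direct sum of complementary real subspaces; hence for every $q\in\Omega_I$ there are unique $F(q),G(q)\in\mathbb{C}_I$ with $f(q)=F(q)+G(q)J$. This defines $F,G\colon\Omega_I\to\mathbb{C}_I$. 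Since $F$ is the composition of $f_I$ with the fixed real-linear projection $\mathbb{H}\to\mathbb{C}_I$, and likewise $G$ up to the fixed isomorphism $\mathbb{C}_IJ\cong\mathbb{C}_I$, while conversely $f_I=F+GJ$ is obtained from $F,G$ by addition and right multiplication by the constant $J$, the function $f_I$ is real differentiable on $\Omega_I$ if and only if both $F$ and $G$ are.

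Next I would carry out the differential computation showing that the operator of Definition \ref{slreg} splits along this decomposition. Because $I\in\mathbb{C}_I$, because $J$ is constant, and because $\mathbb{C}_I$ is commutative so that $I\,\partial_yG$ again lies in $\mathbb{C}_I$, one obtains on $\Omega_I$
\begin{equation*}
	\frac{1}{2}\left(\frac{\partial}{\partial x}+I\frac{\partial}{\partial y}\right)f_I
	=\frac{1}{2}\left(\frac{\partial}{\partial x}+I\frac{\partial}{\partial y}\right)F
	+\left[\frac{1}{2}\left(\frac{\partial}{\partial x}+I\frac{\partial}{\partial y}\right)G\right]J ,
\end{equation*}
where the two summands lie in the complementary subspaces $\mathbb{C}_I$ and $\mathbb{C}_IJ$ respectively. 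Consequently the left-hand side vanishes if and only if each bracketed operator annihilates $F$ and $G$; identifying $\mathbb{C}_I$ with $\mathbb{C}$ via $I\mapsto i$, this is exactly the statement that $F$ and $G$ are holomorphic on $\Omega_I$ in the ordinary sense. It is at this last step that openness of $\Omega_I$ in $\mathbb{C}_I$ is used, both to make ``holomorphic on $\Omega_I$'' meaningful and to invoke the usual equivalence between the Cauchy--Riemann equation and holomorphy.

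Combining the two observations yields both implications. If $f$ is slice regular then, for any chosen pair $I\bot J$, the restriction $f_I$ is left holomorphic, hence real differentiable and annihilated by $\tfrac{1}{2}(\partial_x+I\partial_y)$, so the associated $F,G$ are holomorphic and $f_I=F+GJ$. Conversely, if for each $I\in\mathbb{S}$ we may pick some $J\bot I$ together with holomorphic $F,G\colon\Omega_I\to\mathbb{C}_I$ satisfying $f_I=F+GJ$, then $f_I$ is real differentiable and satisfies the Cauchy--Riemann equation on $\Omega_I$ for every $I$, i.e.\ $f$ is slice regular. I do not expect a genuine obstacle in this argument: the whole content is the linear-algebraic decomposition $\mathbb{H}=\mathbb{C}_I\oplus\mathbb{C}_IJ$ plus commutativity of $\mathbb{C}_I$. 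The only points deserving care are that multiplication by the constant $J$ must be performed on the right, so that no spurious conjugation appears in the computation, and that — in contrast with the Euclidean setting — the openness of $\Omega_I$ here is supplied directly by the definition of $\tau_s$ rather than by $\Omega$ being open in the Euclidean topology.
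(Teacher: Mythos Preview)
Your proof is correct and is exactly the classical argument the paper has in mind: the paper does not spell out a proof at all, remarking only that the result ``is based on writing the values of a quaternionic function by means of two complex-valued functions, and for this reason the result holds also in this framework.'' Your write-up makes that sentence precise, including the one point specific to the present setting, namely that $\Omega\in\tau_s(\mathbb{H})$ is what guarantees $\Omega_I$ is open in $\mathbb{C}_I$.
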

A result which is classical for holomorphic functions is the identity principle. This result holds also for slice regular functions defined on domains in the Euclidean topology, but when considering the slice topology in $\mathbb H$, its proof is more delicate. We recall its statement here since it is crucial to prove various results:

\begin{thm}
	(Identity Principle)
	Let $\Omega$ be an st-domain, namely a domain in $\tau_s(\mathbb{H})$, and let $f,g:\Omega\rightarrow\mathbb{H}$ be slice regular.  If  $f$ and $g$ coincide on a subset of $\Omega_I$ with an accumulation point in $\Omega_I$ for some $I\in\mathbb{S}$, then $f=g$ on $\Omega$.
\end{thm}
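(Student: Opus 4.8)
The plan is to put $h:=f-g$, which is slice regular on $\Omega$, and to prove $h\equiv 0$. The two ingredients are the classical one-variable identity principle for $\mathbb{C}$-valued holomorphic functions — transferred to quaternionic-valued functions on a single slice through the Splitting Lemma — and Propositions \ref{pr-sor} and \ref{pr-rcsd}, which provide, around each point of $\Omega$, a real-connected st-domain through whose real axis one can spread the vanishing from one slice to every other slice.

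First I would isolate the following local statement: \emph{if $h$ is slice regular on a real-connected st-domain $U$ and vanishes on some nonempty slice-open subset of $U$, then $h\equiv 0$ on $U$.} Pick $J_1\in\mathbb{S}$ so that $\mathbb{C}_{J_1}$ meets that subset. If $U_\mathbb{R}=\emptyset$, then Proposition \ref{pr-rcsd}(i) gives $U\subset\mathbb{C}_K$ for some $K\in\mathbb{S}$, which forces $\mathbb{C}_{J_1}=\mathbb{C}_K$ and $U=U_{J_1}$, so $h\equiv 0$ on $U$ by the Splitting Lemma and the classical identity principle on the domain $U_{J_1}$. If $U_\mathbb{R}\neq\emptyset$, then $U_{J_1}$ is a domain by Proposition \ref{pr-rcsd}(iii), hence $h\equiv 0$ on $U_{J_1}$; in particular $h$ vanishes on the nonempty open subset $U_\mathbb{R}$ of $\mathbb{R}$, and since for every $J\in\mathbb{S}$ the slice $U_J$ is a domain containing $U_\mathbb{R}$, the identity principle yields $h\equiv 0$ on $U_J$, so $h\equiv 0$ on $U=\bigcup_{J\in\mathbb{S}}U_J$.

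Then I would run a connectedness argument on $\Omega$. Let $\mathcal{U}$ be the set of $q\in\Omega$ admitting a slice-open neighborhood contained in $\Omega$ on which $h$ vanishes; it is slice-open by construction, and $\Omega$ is slice-connected by definition of st-domain, so it suffices to show $\mathcal{U}$ is nonempty and slice-closed in $\Omega$. For nonemptiness: by the Splitting Lemma and the classical identity principle, $h$ vanishes on the connected component $D$ of $\Omega_I$ containing the given accumulation point; if $D\cap\mathbb{R}=\emptyset$ then $D$ is itself slice-open and $D\subset\mathcal{U}$, whereas if some $x_0\in D\cap\mathbb{R}$, a real-connected st-domain $U\subset\Omega$ containing $x_0$ (Proposition \ref{pr-sor}) has $h\equiv 0$ on the nonempty open set $U_I\cap D$, hence on $U_I$, hence on $U$ by the local statement, so $U\subset\mathcal{U}$. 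For slice-closedness: if $q$ lies in the slice-closure of $\mathcal{U}$ relative to $\Omega$, pick a real-connected st-domain $U$ with $q\in U\subset\Omega$ (Proposition \ref{pr-sor}); then $U\cap\mathcal{U}\neq\emptyset$, so $h$ vanishes on a nonempty slice-open subset of $U$, whence $h\equiv 0$ on $U$ by the local statement and $q\in U\subset\mathcal{U}$. Therefore $\mathcal{U}=\Omega$ and $h\equiv 0$ on $\Omega$.

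The main obstacle — and the reason the Euclidean-topology proof does not transcribe verbatim — is that for a general st-domain the slices $\Omega_I$ may be disconnected and $\Omega_\mathbb{R}$ may even be empty, so one cannot pass directly to $\Omega_\mathbb{R}$ and spread the vanishing slice by slice on all of $\Omega$. Propositions \ref{pr-sor} and \ref{pr-rcsd} repair this locally, and the genuinely delicate point is the closedness of $\mathcal{U}$ in the slice topology, which is exactly where the existence of real-connected neighborhoods is essential.
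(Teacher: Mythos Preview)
The paper does not actually prove this theorem: it is recalled from \cite{Dou2020001} with the remark that ``its proof is more delicate'' than in the Euclidean setting, and the later Theorem~\ref{tm-ip} (the Identity Principle over slice-cones) is likewise stated without proof. So there is no in-paper argument to compare against.

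Your argument is sound and is the natural route one expects from the original source: reduce to $h=f-g$, use the Splitting Lemma to transfer the classical one-variable identity principle to each slice, and run an open--closed argument in $\tau_s$ using the real-connected st-domains supplied by Propositions~\ref{pr-sor} and~\ref{pr-rcsd}. One cosmetic wrinkle: your local lemma is phrased with the hypothesis ``$h$ vanishes on some nonempty \emph{slice-open} subset of $U$'', but in the nonemptiness step you invoke it after concluding $h\equiv 0$ on $U_I$, and $U_I$ is \emph{not} slice-open when it meets $\mathbb{R}$ (for $J\neq\pm I$ one has $U_I\cap\mathbb{C}_J=U_\mathbb{R}$, which has empty interior in $\mathbb{C}_J$). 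The repair is immediate: state the local lemma with the weaker hypothesis ``$h$ vanishes on a nonempty open subset of $U_{J_1}$ for some $J_1\in\mathbb{S}$''---which is precisely what your proof of the lemma actually uses---and then both the nonemptiness and the closedness steps go through verbatim.
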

Another crucial result for slice regular functions is the so-called extension formula, see \cite[Theorem 4.2]{Colombo2009001}, which is used to prove the general representation formula \cite[Theorem 3.2]{Colombo2009001} in the class of axially symmetric slice domains. In \cite{Dou2020001}, we have extended this result to a more general setting and we proved the following:

\begin{thm}(Extension Theorem)
	Let $I_1,I_2\in\mathbb{S}$ with $I_1\neq I_2$, $U_1\in\tau(\mathbb{C}_{I_1})$ and $U_2\in\tau(\mathbb{C}_{I_2})$.  If  $f:U_1\cup U_2\rightarrow\mathbb{H}$ is a function such that  $f|_{U_1}$ and $f|_{U_2}$ are holomorphic, then the function $f|_{V^+}$ admits a slice regular extension $\widetilde{f}$ to $V^{+\Delta}\in\tau_s(\mathbb{H})$.
	
	Moreover, for each domain $W$ in $\tau_s(\mathbb{H})$,
	\begin{equation*}
		W\subset V^{+\Delta}, \qquad W\cap V^+\neq\varnothing,
	\end{equation*}
	$\widetilde{f}|_W$ is slice and the unique slice regular extension on $W$  of $f|_{W\cap V^+}$, where
	
	\begin{eqnarray*}
		V^+ &:=& (U_1\cup \mathbb{C}_{I_1}^+)\bigsqcup (U_2\cap\mathbb{C}_{I_2}^+)\bigsqcup (U_1\cap U_2\cap\mathbb{R}),
		\\
		V^\Delta &:=& \{x+y \mathbb{S}: x+yI_1\in U_1, x+yI_2\in U_2,    y\in\mathbb{R},  \ y\ge 0\},
		\\
		V^{+\Delta} &:=& V^+\cup V^\Delta.
	\end{eqnarray*}
\end{thm}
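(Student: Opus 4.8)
The plan is to build $\widetilde f$ from a stem pair $(\alpha,\beta)$ read off the values of $f$ on the two slices, propagate it over the axially symmetric set $V^\Delta$, and paste it to $f$ on $V^+$. Concretely, on the open set $E:=\{(x,y)\in\mathbb R^2:x+yI_1\in U_1,\ x+yI_2\in U_2\}$ I would let $\alpha,\beta\colon E\to\mathbb H$ be the unique solution of the linear system $\alpha+I_1\beta=f(x+yI_1)$, $\alpha+I_2\beta=f(x+yI_2)$, which exists because $I_1-I_2$ is an invertible quaternion. Two features of $(\alpha,\beta)$ drive everything. First, since $f$ is single-valued on $U_1\cap U_2$, subtracting the two equations at a point $(x,0)\in E$ forces $\beta(x,0)=0$ and $\alpha(x,0)=f(x)$. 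Second, writing the holomorphy of $f|_{U_1}$ and $f|_{U_2}$ in the real coordinates of $\mathbb C_{I_1}$ and $\mathbb C_{I_2}$ and substituting $f(x+yI_k)=\alpha+I_k\beta$ gives $(\alpha_x-\beta_y)+I_k(\beta_x+\alpha_y)=0$ for $k=1,2$; subtracting these and using the invertibility of $I_1-I_2$ once more yields $\alpha_x=\beta_y$ and $\alpha_y=-\beta_x$ on $E$.

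With this in hand I would set $\widetilde f:=f$ on $V^+$ and, for every sphere $x+y\mathbb S\subset V^\Delta$ (so $(x,y)\in E$, $y\ge0$), $\widetilde f(x+yI):=\alpha(x,y)+I\beta(x,y)$ for all $I\in\mathbb S$. This is well defined: the $y\ge0$ representation of a sphere is unique, the vanishing of $\beta$ at real points makes the value there independent of $I$, and on $V^+\cap V^\Delta$ the two prescriptions agree by the defining equations for $\alpha,\beta$. Next I would check $V^{+\Delta}\in\tau_s(\mathbb H)$, that is, $(V^{+\Delta})_I$ is open in $\mathbb C_I$ for each $I$: points in the open parts of $V^+$ meeting $\mathbb C_I$ are interior; points of $(V^\Delta)_I=\{u+vI:(u,|v|)\in E\}$ are interior because $E$ is open; and a small $\mathbb C_I$-disc about a real point of $V^{+\Delta}$ is covered by its upper half (lying in $U_1$, in $U_2$, or in $(V^\Delta)_I$, according to $I$), its lower half (lying in $(V^\Delta)_I$) and the real segment.

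To prove slice regularity I would fix $I\in\mathbb S$ and show $\widetilde f|_{(V^{+\Delta})_I}$ is holomorphic. On $(V^\Delta)_I$ one has $\widetilde f(u+vI)=\alpha(u,|v|)\pm I\beta(u,|v|)$ according to the sign of $v$, and the two identities of the first paragraph give $\tfrac12(\partial_u+I\partial_v)\widetilde f=0$ for $v\ne0$; since the two one-sided limits at a real point both equal $\alpha(u,0)=f(u)$ (here $\beta(u,0)=0$ is used), $\widetilde f$ is continuous there, so $\widetilde f$, being continuous and holomorphic off the real line, is holomorphic on all of $(V^\Delta)_I$. For $I\ne\pm I_1,\pm I_2$ we have $(V^{+\Delta})_I=(V^\Delta)_I$ and we are done; for $I=I_1$ (resp.\ $I_2$) the points of $(V^{+\Delta})_I$ outside $(V^\Delta)_I$ lie in the open set $U_1\cap\mathbb C_{I_1}^+$ (resp.\ $U_2\cap\mathbb C_{I_2}^+$), where $\widetilde f=f$ is holomorphic, so holomorphy extends to all of $(V^{+\Delta})_I$. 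Hence $\widetilde f$ is slice regular on $V^{+\Delta}$ and, by construction, $\widetilde f|_{V^+}=f|_{V^+}$.

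For the ``moreover'' part, note first that a sphere $x+y\mathbb S\subset V^{+\Delta}$ with $y>0$ is automatically contained in $V^\Delta$ (being non-real, $x+yI_1$ must lie in $U_1$ and $x+yI_2$ in $U_2$, so $(x,y)\in E$), so on every sphere it contains $\widetilde f$ has the form $\alpha+I\beta$; thus $\widetilde f$, and therefore $\widetilde f|_W$, is slice. As the restriction of a slice regular function to the slice-open set $W$, $\widetilde f|_W$ is slice regular and extends $f|_{W\cap V^+}$; uniqueness follows from the Identity Principle, since $W$ being slice-open and $W\cap V^+\ne\emptyset$ forces $W\cap V^+$ to contain a nonempty subset open in $\mathbb C_{I_1}$ or $\mathbb C_{I_2}$ (the upper half of a small disc about one of its points), hence a set with an accumulation point in $W$. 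I expect the slice-regularity step to be the main obstacle: converting the holomorphy of the two restrictions into the single pair $\alpha_x=\beta_y$, $\alpha_y=-\beta_x$ — exactly where the hypothesis $I_1\ne I_2$ is indispensable — and then propagating slice regularity across the real axis, where $\widetilde f$ is defined a priori by two formulas that could fail to match; the match is rescued by the vanishing of $\beta$ on $\mathbb R$, i.e.\ by $f$ being single-valued on $U_1\cap U_2\cap\mathbb R$, after which a removable-singularity argument along the real line finishes the job.
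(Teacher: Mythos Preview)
Your construction is essentially the specialization to $\mathbb{H}$ of the paper's general Extension Theorem (Theorem~\ref{th-et} in Section~6); the quaternionic statement itself is only quoted here, with proof deferred to \cite{Dou2020001}. The paper's argument packages your stem pair $(\alpha,\beta)$ as $\zeta^+(J)f(x+yJ)$ with $J=(I_1,I_2)^T$ and invokes the Extension Lemma (Lemma~\ref{lm-lut}) to get holomorphy of $g[I](x+yI)=(1,I)\zeta^+(J)f(x+yJ)$ in one stroke, whereas you unpack this by hand: you solve the $2\times2$ system, derive $\alpha_x=\beta_y$, $\alpha_y=-\beta_x$ from the two Cauchy--Riemann systems, and then verify holomorphy of $\alpha+I\beta$ slice by slice, including the removable-singularity step across $\mathbb{R}$. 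The two routes are the same argument at different levels of abstraction; yours is more explicit about why $I_1\neq I_2$ and single-valuedness on $\mathbb{R}$ are exactly what is needed.

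One point in your ``moreover'' paragraph deserves tightening. Your argument that $\widetilde f|_W$ is slice is phrased in terms of spheres \emph{contained} in $V^{+\Delta}$, but the slice condition must be checked pointwise: for every $x+yI\in W$ you need a stem value $F(x+yi)$ compatible with all $J$ for which $x+yJ\in W$. Your observation that any full sphere in $V^{+\Delta}$ already lies in $V^\Delta$ is correct, but you should also note that if $x+yI\in V^+\setminus V^\Delta$ (so $(x,y)\notin E$, $y>0$), then no other $x+yJ$ with $J\neq\pm I$ lies in $V^{+\Delta}$ at all, so the stem value there is unconstrained and can be chosen freely. With that remark the sliceness is complete; the uniqueness via the Identity Principle is fine as you wrote it.
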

We note that if we consider a disc  $B_I(q,r)\subset\mathbb{C}_I$, $I\in\mathbb{S}$, with center $q\in\mathbb{C}_I$ and radius $r\in\mathbb{R}_+$, and a holomorphic function $f:B_I(q,r)\rightarrow\mathbb{H}$, then $f$ can be uniquely extended  to be  a slice regular function { on the $\sigma$-ball}  $\Sigma(q,r)$.
\smallskip

Another approach to slice regular functions makes use of the notion of slice functions. To recall this notion, we first introduce a notation: for any $\Omega\subset\mathbb{H}$, define a set in $\mathbb{C}$ by
\begin{equation*}
	\Omega_s:=\{x+yi\in\mathbb{C}:x,y\in\mathbb{R},\ \exists\ J\in\mathbb{S}\ |\ x+yJ\in\Omega\}.
\end{equation*}

\begin{defn}
	Let $\Omega\subset\mathbb{H}$ and function $f:\Omega\rightarrow\mathbb{H}$; $f$ is called slice function if there is a  $F:\Omega_s\rightarrow\mathbb{H}^{2\times 1}$ such that
	\begin{equation}\label{eq-fzfx}
		f(x+yI)=(1,\ I)F(x+yi),
	\end{equation}
	for each $x+yI\in\Omega$ with $x,y\in\mathbb{R}$ and $I\in\mathbb{S}$.
	We call $F$ is a stem function of $f$.
\end{defn}
For any $I\in\mathbb{S}$, there is an isomorphism
\begin{align*}
	\mathcal{P}_I:\quad\mathbb{C}\quad\xlongrightarrow[\hskip1cm]{}&\quad \mathbb{C}_I
	\\x+yi\ \shortmid\!\xlongrightarrow[\hskip1cm]{}&\ x+yI,
\end{align*}
and for each path $\gamma:[0,1]\rightarrow\mathbb{C}$, the corresponding path in $\mathbb{C}_I$ is denoted by
\begin{equation*}
	\gamma^I:=\mathcal{P}_I\circ\gamma.
\end{equation*}

Let  $\mathscr{P}(\mathbb{C})$ be the set of paths $\gamma: [0,1]\longrightarrow \mathbb C$ with initial point $\gamma(0)$ in $\mathbb{R}$. We define a subset of $\mathscr{P}(\mathbb{C})$ by setting
\begin{equation*}
	\mathscr{P}(\mathbb{C}^+):=\{\gamma\in\mathscr{P}(\mathbb{C}):\gamma(0,1]\subset\mathbb{C}^+\}.
\end{equation*}
Given $\Omega\subset\mathbb{H}$ and $\gamma\in\mathscr{P}(\mathbb{C})$ we define
\begin{equation*}
	\mathscr{P}(\mathbb{C},\Omega):=\{\delta\in\mathscr{P}(\mathbb{C}):\exists\ {I\in\mathbb{S}},\mbox{ s.t. }\delta^{I}\subset\Omega\}
\end{equation*}
\begin{equation*}
	\mathscr{P}(\mathbb{C}^+,\Omega):=\{\delta\in\mathscr{P}(\mathbb{C}^+):\exists\ {I\in\mathbb{S}},\mbox{ s.t. }\delta^{I}\subset\Omega\}
\end{equation*}
and
\begin{equation*}
	\mathbb{S}(\gamma,\Omega):=\{I\in\mathbb{S}:\gamma^{I}\subset\Omega\}.
\end{equation*}

We now generalize the definition of slice function:
\begin{defn}\label{Def36}
	Let $\Omega\subset\mathbb{H}$. A function $f:\Omega\rightarrow\mathbb{H}$ is called path-slice if there is a function $F:\mathscr{P}(\mathbb{C},\Omega)\rightarrow\mathbb{H}^{2\times 1}$ such that
	\begin{equation*}
		f\circ\gamma^{I}(1)=(1,\ I)F(\gamma),
	\end{equation*}
	for each $\gamma\in\mathscr{P}(\mathbb{C},\Omega)$ and $I\in\mathbb{S}(\gamma,\Omega)$.
	We call $F$ is a path-slice stem function of $f$.
\end{defn}

Path-slice functions are characterised in the next result:

\begin{prop}\label{pr-ps}
	Let $\Omega\subset\mathbb{H}$ and $f:\Omega\rightarrow\mathbb{H}$. Then following statements are equivalent:
	\begin{enumerate}[\upshape (i)]
		
		\item $f$ is a path-slice function.
		
		\item For each $\gamma\in\mathscr{P}(\mathbb{C},\Omega)$, there is an element  $q_\gamma\in\mathbb{H}^{2\times 1}$ such that
		\begin{equation*}
			f\circ\gamma^I(1)=(1,I)q_\gamma\qquad\forall\ I\in{\mathbb{S}(\gamma,\Omega)}.
		\end{equation*}
		
		\item For each $\gamma\in\mathscr{P}(\mathbb{C}^+,\Omega)$, there is an element  $p_\gamma\in\mathbb{H}^{2\times 1}$ such that for each $I\in\mathbb{S}(\gamma,\Omega)$,
		\begin{equation*}
			f\circ\gamma^I(1)=(1,I)p_\gamma.
		\end{equation*}
		
		\item For each $\gamma\in\mathscr{P}(\mathbb{C},\Omega)$ and $I,J,K\in\mathbb{S}(\gamma,\Omega)$ with $J\neq K$,
		\begin{equation*}
			f\circ\gamma^I=(1,I)\left(\begin{matrix}
				1&J\\1&K
			\end{matrix}\right)^{-1}
			\left(\begin{matrix}
				f\circ\gamma^J\\f\circ\gamma^K
			\end{matrix}\right)=\left(\begin{matrix}
(J-K)^{-1}J&(K-J)^{-1}K\\(J-K)^{-1}&(K-J)^{-1}
\end{matrix}\right)\left(\begin{matrix}
				f\circ\gamma^J\\f\circ\gamma^K
			\end{matrix}\right).
		\end{equation*}
		
		\item For each $\gamma\in\mathscr{P}(\mathbb{C})$ and $I,J,K\in\mathbb{S}(\gamma,\Omega)$ with $J\neq K$,
		\begin{equation*}
			\begin{split}
				f\circ\gamma^I=(J-K)^{-1}(Jf\circ\gamma^J-Kf\circ\gamma^K)+I(J-K)^{-1}(f\circ\gamma^J-f\circ\gamma^K).
			\end{split}
		\end{equation*}
		
	\end{enumerate}
\end{prop}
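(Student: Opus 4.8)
The plan is to prove the implications (i)$\Leftrightarrow$(ii), (ii)$\Rightarrow$(iii)$\Rightarrow$(ii) and (ii)$\Leftrightarrow$(iv)$\Leftrightarrow$(v), which together give the equivalence of all five conditions. The equivalence (i)$\Leftrightarrow$(ii) is merely an unfolding of Definition~\ref{Def36}: a path-slice stem function $F$ provides the elements $q_\gamma:=F(\gamma)$, while a choice of one admissible $q_\gamma$ for each $\gamma\in\mathscr{P}(\mathbb{C},\Omega)$ defines such an $F$. The implication (ii)$\Rightarrow$(iii) is immediate since $\mathscr{P}(\mathbb{C}^+,\Omega)\subset\mathscr{P}(\mathbb{C},\Omega)$. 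The equivalence (iv)$\Leftrightarrow$(v) is purely computational: for $J\neq K$ the element $K-J$ is invertible in the division ring $\mathbb{H}$, so $\left(\begin{matrix}1&J\\1&K\end{matrix}\right)$ is invertible in $\mathbb{H}^{2\times2}$ with inverse $\left(\begin{matrix}(J-K)^{-1}J&(K-J)^{-1}K\\(J-K)^{-1}&(K-J)^{-1}\end{matrix}\right)$, and multiplying $(1,I)$ by this matrix and then by $\left(\begin{matrix}f\circ\gamma^J\\f\circ\gamma^K\end{matrix}\right)$, using $(K-J)^{-1}=-(J-K)^{-1}$, reproduces exactly the right-hand side of (v).

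For (ii)$\Rightarrow$(iv) I would fix $\gamma\in\mathscr{P}(\mathbb{C},\Omega)$ and $I,J,K\in\mathbb{S}(\gamma,\Omega)$ with $J\neq K$, and for $t\in[0,1]$ let $\gamma_t$ be $\gamma|_{[0,t]}$ reparametrized on $[0,1]$. Since each $\gamma_t^L$ is a subpath of $\gamma^L\subset\Omega$, one has $\gamma_t\in\mathscr{P}(\mathbb{C},\Omega)$ with $I,J,K\in\mathbb{S}(\gamma_t,\Omega)$. Applying (ii) to $\gamma_t$ gives $q_{\gamma_t}$ with $f(\gamma^L(t))=f\circ\gamma_t^L(1)=(1,L)q_{\gamma_t}$ for $L\in\{I,J,K\}$; the equations for $L=J,K$ force $q_{\gamma_t}=\left(\begin{matrix}1&J\\1&K\end{matrix}\right)^{-1}\left(\begin{matrix}f(\gamma^J(t))\\f(\gamma^K(t))\end{matrix}\right)$, and substituting this into the equation for $L=I$ yields the identity of (iv) at the parameter $t$. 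For the converse (iv)$\Rightarrow$(ii): if $\mathbb{S}(\gamma,\Omega)=\{I_0\}$ is a singleton the assertion holds with $q_\gamma=\left(\begin{matrix}f\circ\gamma^{I_0}(1)\\0\end{matrix}\right)$; otherwise pick distinct $J,K\in\mathbb{S}(\gamma,\Omega)$, set $q_\gamma:=\left(\begin{matrix}1&J\\1&K\end{matrix}\right)^{-1}\left(\begin{matrix}f\circ\gamma^J(1)\\f\circ\gamma^K(1)\end{matrix}\right)$, and verify $f\circ\gamma^I(1)=(1,I)q_\gamma$ for every $I\in\mathbb{S}(\gamma,\Omega)$ (for $I\in\{J,K\}$ by construction, otherwise by (iv) at $t=1$).

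The heart of the proof is (iii)$\Rightarrow$(ii), which removes the hypothesis that the path remain in the open upper half-plane after time $0$. Let $\gamma\in\mathscr{P}(\mathbb{C},\Omega)$, so that $\mathbb{S}(\gamma,\Omega)\neq\emptyset$, and set $t^*:=\sup\{t\in[0,1]:\gamma(t)\in\mathbb{R}\}$, which is well defined ($\gamma(0)\in\mathbb{R}$) and satisfies $\gamma(t^*)\in\mathbb{R}$. If $t^*=1$ then $\gamma(1)\in\mathbb{R}$, so $\gamma^I(1)=\gamma(1)$ for every $I$ and $q_\gamma:=\left(\begin{matrix}f(\gamma(1))\\0\end{matrix}\right)$ works. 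If $t^*<1$, then $\gamma((t^*,1])$ misses $\mathbb{R}$ and, being connected, lies entirely in $\mathbb{C}^+$ or entirely in $\mathbb{C}^-$; let $\delta$ be $\gamma|_{[t^*,1]}$ reparametrized on $[0,1]$, so $\delta(0)=\gamma(t^*)\in\mathbb{R}$. If $\gamma((t^*,1])\subset\mathbb{C}^+$ then $\delta\in\mathscr{P}(\mathbb{C}^+)$ and every $I\in\mathbb{S}(\gamma,\Omega)$ has $\delta^I\subset\gamma^I\subset\Omega$, so $\delta\in\mathscr{P}(\mathbb{C}^+,\Omega)$ with $\mathbb{S}(\gamma,\Omega)\subset\mathbb{S}(\delta,\Omega)$; then (iii) applied to $\delta$ yields $p$ with $f\circ\gamma^I(1)=f\circ\delta^I(1)=(1,I)p$ for all $I\in\mathbb{S}(\gamma,\Omega)$, and $q_\gamma:=p$ works. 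If $\gamma((t^*,1])\subset\mathbb{C}^-$, pass to the conjugate path $\overline{\delta}$ with $\overline{\delta}(t):=\overline{\delta(t)}$, which lies in $\mathscr{P}(\mathbb{C}^+)$ and satisfies $\overline{\delta}^{I}=\delta^{-I}$ for every $I$, whence $\mathbb{S}(\overline{\delta},\Omega)=-\mathbb{S}(\delta,\Omega)\supset-\mathbb{S}(\gamma,\Omega)$; applying (iii) to $\overline{\delta}$ and evaluating its conclusion at $I=-J$ produces $p=\left(\begin{matrix}p_1\\p_2\end{matrix}\right)$ with $f\circ\gamma^J(1)=f\circ\overline{\delta}^{-J}(1)=(1,-J)p$ for all $J\in\mathbb{S}(\gamma,\Omega)$, so $q_\gamma:=\left(\begin{matrix}p_1\\-p_2\end{matrix}\right)$ works. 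Finally (ii)$\Rightarrow$(i) is once more the definition, closing the cycle.

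I expect the only genuinely delicate step to be this (iii)$\Rightarrow$(ii): identifying the correct cut parameter $t^*$, and, in the lower-half-plane case, keeping the conjugation identity $\overline{\delta}^{I}=\delta^{-I}$ (equivalently $\mathcal{P}_I\circ\overline{\delta}=\mathcal{P}_{-I}\circ\delta$) and the relabelling $\mathbb{S}(\overline{\delta},\Omega)=-\mathbb{S}(\delta,\Omega)$ straight. All the remaining steps are elementary, once one notes that $\left(\begin{matrix}1&J\\1&K\end{matrix}\right)$ has a genuine two-sided inverse over $\mathbb{H}$ whenever $J\neq K$, so that the $2\times2$ linear systems above are unambiguously solvable.
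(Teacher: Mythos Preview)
The paper does not actually prove this proposition: Section~\ref{sc-mr} states it among several results whose proofs are explicitly deferred to \cite{Dou2020001}, so there is no in-paper argument to compare against. That said, your proof is correct and self-contained. The equivalences (i)$\Leftrightarrow$(ii) and (iv)$\Leftrightarrow$(v) are indeed purely formal; your handling of (ii)$\Rightarrow$(iv) via the truncated paths $\gamma_t$ is the right way to upgrade the endpoint identity in (ii) to the full path identity in (iv); and your case split in (iv)$\Rightarrow$(ii) according to whether $|\mathbb{S}(\gamma,\Omega)|=1$ or $\ge 2$ correctly covers the vacuous case of (iv).

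The substantive step (iii)$\Rightarrow$(ii) is also handled cleanly. The choice of $t^*=\sup\{t:\gamma(t)\in\mathbb{R}\}$ isolates a terminal subpath $\delta$ lying in a single open half-plane, and the conjugation trick $\overline{\delta}^{\,I}=\delta^{-I}$ (equivalently $\mathcal{P}_I\circ\overline{\delta}=\mathcal{P}_{-I}\circ\delta$) together with the sign flip $q_\gamma=(p_1,-p_2)^T$ correctly reduces the $\mathbb{C}^-$ case to the $\mathbb{C}^+$ case. One small remark: you use that $\gamma(t^*)\in\mathbb{R}$, which follows from closedness of $\gamma^{-1}(\mathbb{R})$ --- worth saying explicitly, since it is what guarantees $\delta(0)\in\mathbb{R}$ and hence $\delta\in\mathscr{P}(\mathbb{C}^+)$.
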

The above definition is indeed a generalization of the notion of slice functions, and in fact in \cite{Dou2020001} we proved:
\begin{prop}
	Every slice function defined on a subset of $\mathbb{H}$ is path-slice.
\end{prop}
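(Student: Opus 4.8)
The plan is to start from a slice function $f\colon\Omega\to\mathbb H$ with stem function $F\colon\Omega_s\to\mathbb H^{2\times1}$, so that $f(x+yI)=(1,I)F(x+yi)$ for every $x+yI\in\Omega$, and produce from $F$ a path-slice stem function $\widetilde F\colon\mathscr P(\mathbb C,\Omega)\to\mathbb H^{2\times1}$. The natural candidate is $\widetilde F(\gamma):=F(\gamma(1))$, where for $\gamma\in\mathscr P(\mathbb C,\Omega)$ the endpoint $\gamma(1)$ lies in $\mathbb C$; one must first check $\gamma(1)\in\Omega_s$ so that $F(\gamma(1))$ is defined. This is immediate: if $\delta^I\subset\Omega$ for some $I\in\mathbb S$ then in particular the endpoint $\delta^I(1)=\mathcal P_I(\gamma(1))\in\Omega$, and writing $\gamma(1)=x+yi$ we get $x+yI\in\Omega$, hence $x+yi\in\Omega_s$ by definition of $\Omega_s$.

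Next I would verify the defining identity of Definition \ref{Def36}. Fix $\gamma\in\mathscr P(\mathbb C,\Omega)$ and $I\in\mathbb S(\gamma,\Omega)$, so $\gamma^I\subset\Omega$. Write $\gamma(1)=x+yi$ with $x,y\in\mathbb R$; then $\gamma^I(1)=x+yI\in\Omega$, and since $f$ is a slice function with stem function $F$,
\begin{equation*}
	f\circ\gamma^I(1)=f(x+yI)=(1,\ I)F(x+yi)=(1,\ I)F(\gamma(1))=(1,\ I)\widetilde F(\gamma).
\end{equation*}
This is exactly the required relation, so $f$ is path-slice with path-slice stem function $\widetilde F$.

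There is essentially no hard step here; the only point requiring a moment's care is the well-definedness, namely that the formula $\widetilde F(\gamma)=F(\gamma(1))$ depends only on $\gamma$ (it does, trivially, since $\gamma(1)$ is a single point of $\mathbb C$) and that it lands in the right domain $\Omega_s$, which is handled above. One could alternatively phrase the argument through the characterization in Proposition \ref{pr-ps}(ii): for $\gamma\in\mathscr P(\mathbb C,\Omega)$ set $q_\gamma:=F(\gamma(1))$, and the displayed computation shows $f\circ\gamma^I(1)=(1,I)q_\gamma$ for all $I\in\mathbb S(\gamma,\Omega)$, giving path-sliceness directly. Either route is short; I expect the "main obstacle," such as it is, to be merely bookkeeping the identification $\gamma(1)\leftrightarrow x+yi$ and confirming membership in $\Omega_s$.
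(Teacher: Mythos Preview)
Your argument is correct and is the natural one: given a stem function $F:\Omega_s\to\mathbb H^{2\times 1}$, the assignment $\widetilde F(\gamma):=F(\gamma(1))$ is well defined (since $\gamma\in\mathscr P(\mathbb C,\Omega)$ forces $\gamma(1)\in\Omega_s$) and immediately gives the path-slice identity. The paper does not actually supply its own proof of this proposition in the text---it merely records the statement and cites \cite{Dou2020001}---but your route is essentially the only sensible one, and it matches what one would expect the cited proof to do. One cosmetic slip: in the well-definedness check you write $\delta^I$ once where you mean $\gamma^I$.
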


The class of path-slice functions also contains the class of slice regular functions:

\begin{thm}
	Every slice regular function defined on an open set in $\tau_s(\mathbb{H})$ is path-slice.
\end{thm}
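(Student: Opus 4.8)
The plan is to verify the defining property of path-slice functions (Definition \ref{Def36}) for a given slice regular function $f:\Omega\to\mathbb H$ with $\Omega\in\tau_s(\mathbb H)$. Concretely, one must produce a stem function $F:\mathscr P(\mathbb C,\Omega)\to\mathbb H^{2\times 1}$ satisfying $f\circ\gamma^I(1)=(1,I)F(\gamma)$ for all $\gamma\in\mathscr P(\mathbb C,\Omega)$ and all $I\in\mathbb S(\gamma,\Omega)$. By the equivalence in Proposition \ref{pr-ps}, it suffices to check condition (v): for each $\gamma\in\mathscr P(\mathbb C)$ and any $I,J,K\in\mathbb S(\gamma,\Omega)$ with $J\neq K$,
\begin{equation*}
f\circ\gamma^I=(J-K)^{-1}(Jf\circ\gamma^J-Kf\circ\gamma^K)+I(J-K)^{-1}(f\circ\gamma^J-f\circ\gamma^K).
\end{equation*}
So the real content is: the "representation-formula" identity holds along any path that starts on the real axis, once we only know holomorphy slice-by-slice.

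First I would reduce to a local-then-globalize argument along the path $\gamma:[0,1]\to\mathbb C$. Fix $J\neq K$ in $\mathbb S(\gamma,\Omega)$ together with an arbitrary $I\in\mathbb S(\gamma,\Omega)$; we want to show the displayed identity holds at the endpoint $t=1$. I would actually prove the stronger statement that it holds for all $t\in[0,1]$, i.e. that the function
\begin{equation*}
t\longmapsto f(\gamma^I(t))-\big[(J-K)^{-1}(Jf(\gamma^J(t))-Kf(\gamma^K(t)))+I(J-K)^{-1}(f(\gamma^J(t))-f(\gamma^K(t)))\big]
\end{equation*}
is identically zero on $[0,1]$. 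At $t=0$ we have $\gamma(0)\in\mathbb R$, so $\gamma^I(0)=\gamma^J(0)=\gamma^K(0)\in\mathbb R$ and the bracket collapses to $f(\gamma^I(0))$ using $(J-K)^{-1}(J-K)=1$ and $(J-K)^{-1}(1-1)=0$; hence the expression vanishes at $t=0$. The strategy is then a connectedness/continuity argument on the parameter interval: let $E\subset[0,1]$ be the set of $t$ at which the identity holds; $E$ is closed by continuity of $f$ along each of the three slice paths (slice regular functions are real-differentiable, hence continuous in the Euclidean topology on each slice, and the paths are Euclidean-continuous into $\mathbb C_I,\mathbb C_J,\mathbb C_K$), and $0\in E$. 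It remains to show $E$ is open in $[0,1]$.

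For openness at a point $t_0\in E$, set $x_0+y_0i=\gamma(t_0)$. If $y_0\neq 0$, then a small Euclidean disc $B_I(\gamma^I(t_0),r)$ lies in $\Omega\cap\mathbb C_I$ and does not meet $\mathbb R$, and similarly for $J,K$; by the Extension Theorem (equivalently the remark that a holomorphic function on $B_I(q,r)$ extends uniquely to the $\sigma$-ball $\Sigma(q,r)$), $f$ is slice regular on a genuinely three-(real-)dimensional neighbourhood of the circle $x_0+y_0\mathbb S$, so the classical representation formula applies there and forces the identity locally, i.e. on $\gamma^{-1}$ of that disc intersected with a neighbourhood of $t_0$. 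The delicate case is $y_0=0$, i.e. $\gamma(t_0)\in\mathbb R$: here the three slice paths all pass through the same real point, and one cannot simply invoke the Euclidean representation formula because $\Omega$ near $\mathbb R$ need not be axially symmetric. Instead I would use the Extension Theorem directly with $I_1=J$, $I_2=K$ (or $I_1=I$, $I_2=J$), applied to the holomorphic data $f|_{\Omega_J}$ and $f|_{\Omega_K}$ on small discs around the real point $\gamma(t_0)$: it produces a slice regular extension $\widetilde f$ on a slice-open set $V^{+\Delta}$ that meets all three slices near $\gamma(t_0)$, and on which $\widetilde f$ coincides with $f$ (by the uniqueness clause of the Extension Theorem and the Identity Principle on the relevant st-domains $\Omega_I,\Omega_J,\Omega_K$, which are domains by Proposition \ref{pr-rcsd}(iii) / Corollary \ref{co-dsd}). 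Since $\widetilde f$, being produced by the extension construction, satisfies the representation identity by construction on $V^{+\Delta}$, so does $f$ in a neighbourhood of $t_0$ on the three paths; this gives openness of $E$ at $t_0$.

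The main obstacle is exactly this real-point case: making rigorous that the Extension Theorem's output $\widetilde f$ agrees with $f$ on all three slices simultaneously in a full neighbourhood of a real point, and that the identity built into the extension construction is precisely condition (v) of Proposition \ref{pr-ps}. This requires carefully matching the sets $U_1,U_2,V^+,V^\Delta,V^{+\Delta}$ from the Extension Theorem to the geometry of the three paths near $\gamma(t_0)$, and invoking the uniqueness-of-slice-regular-extension part of that theorem together with the Identity Principle to pin down $\widetilde f=f$. Everything else — closedness of $E$, the non-real case, the algebra showing the bracket collapses correctly at $t=0$ — is routine. Once $E=[0,1]$ is established for arbitrary $I,J,K$, evaluating at $t=1$ gives condition (v), and Proposition \ref{pr-ps} then yields that $f$ is path-slice, completing the proof.
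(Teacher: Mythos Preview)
Your reduction to condition (v) of Proposition~\ref{pr-ps} is correct and matches the paper's strategy: the theorem is equivalent to the Representation Formula (Theorem~\ref{th-rf}), and the substance lies in proving that formula along paths.

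However, your clopen argument on $[0,1]$ has a genuine gap at non-real points. You claim that at $t_0$ with $y_0\neq 0$, extending $f|_{B_I}$ to a $\sigma$-ball and invoking ``the classical representation formula'' forces the identity locally. This does not work: for $r<|y_0|$ the $\sigma$-ball $\Sigma(\gamma^I(t_0),r)$ is nothing but the disc $B_I(\gamma^I(t_0),r)$ inside the single slice $\mathbb C_I$, so no other slice is reached and no representation formula is available there. Even if you build an extension from $f|_{B_J},f|_{B_K}$ via the Extension Theorem, the resulting function need not agree with $f_I$ on $B_I$ --- that agreement is exactly what you want to prove, and knowing only $t_0\in E$ gives coincidence at a single point, which is not enough for the identity principle. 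So openness of $E$ at non-real $t_0$ is not established; contrary to your final paragraph, the non-real case is \emph{not} routine.

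The paper's (referenced) approach, visible in the proof of Proposition~\ref{pr-li} and in Lemmas~\ref{lm-los} and~\ref{lm-lut}, bypasses the $[0,1]$ framework entirely. One first finds a connected open $U\subset\mathbb C$ containing $\gamma([0,1])$ with $\mathcal P_I(U),\mathcal P_J(U),\mathcal P_K(U)\subset\Omega$ (compactness). Then the extension-formula function
\[
g[I](x+yI):=(1,I)\begin{pmatrix}1&J\\1&K\end{pmatrix}^{-1}\begin{pmatrix}f(x+yJ)\\f(x+yK)\end{pmatrix}
\]
is $I$-holomorphic on $\mathcal P_I(U)$ and coincides with $f_I$ on the open real interval $U\cap\mathbb R\ni\gamma(0)$. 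The classical identity principle for holomorphic functions, applied on the connected component of $\mathcal P_I(U)\cap\Omega_I$ containing $\gamma^I(0)$ (which contains all of $\gamma^I([0,1])$ since $\gamma^I$ is a path in that intersection), yields $g[I]=f_I$ along the entire path in one stroke. Your real-point argument is essentially this key step, correctly identified; the point is that it already does all the work, and no separate treatment of non-real $t_0$ is needed.
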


Note that a slice regular function is not necessarily a slice function, unless one adds hypothesis on the open set of definition, for example that it is an axially symmetric s-domain.

A cornerstone of the future development of slice quaternionic analysis on slice-open sets is the following result, originally proved in \cite{Dou2020001}:

\begin{thm}\label{th-rf}
	(Representation Formula) Let $\Omega\in\tau_s(\mathbb{H})$ and $f:\Omega\rightarrow\mathbb{H}$ be slice regular. For each $\gamma\in\mathscr{P}(\mathbb{C},\Omega)$ and $I,J,K\in\mathbb{S}(\gamma,\Omega)$ with $J\neq K$, we have
	\begin{equation*}
		f\circ\gamma^I=(1,I)
		\begin{pmatrix}
			1&J\\1&K
		\end{pmatrix}^{-1}
		\begin{pmatrix}
			f\circ\gamma^{J}\\f\circ\gamma^{K}
		\end{pmatrix}.
	\end{equation*}
\end{thm}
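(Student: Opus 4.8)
The plan is to deduce the Representation Formula from the already-established fact that a slice regular function is path-slice (the immediately preceding theorem) together with the characterization of path-slice functions in Proposition \ref{pr-ps}. Since the formula in the statement of Theorem \ref{th-rf} is literally item (iv) of Proposition \ref{pr-ps} (the $2\times 2$ matrix identity), once we know that $f$ is path-slice the conclusion is immediate: by the preceding theorem $f$ is path-slice, hence by Proposition \ref{pr-ps} the equivalent condition (iv) holds, which is exactly the asserted equation. So the genuine content to be verified is only that the matrix $\begin{pmatrix}1 & J\\ 1 & K\end{pmatrix}$ is invertible in the quaternionic sense whenever $J\neq K$, and that the $\gamma$ and $I,J,K$ appearing in the statement do satisfy the hypotheses needed to apply Proposition \ref{pr-ps}.

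First I would record that for $J,K\in\mathbb{S}$ with $J\neq K$ one has $J-K\neq 0$, and since $J-K\in\mathbb H\setminus\{0\}$ it is invertible; a direct computation then shows
\begin{equation*}
\begin{pmatrix}1 & J\\ 1 & K\end{pmatrix}^{-1}
=\begin{pmatrix}(J-K)^{-1}J & (K-J)^{-1}K\\ (J-K)^{-1} & (K-J)^{-1}\end{pmatrix},
\end{equation*}
so the right-hand side of the representation formula is well defined. Next I would unwind the definitions: given $\gamma\in\mathscr P(\mathbb C,\Omega)$ and $I,J,K\in\mathbb S(\gamma,\Omega)$, by definition $\gamma^I,\gamma^J,\gamma^K$ are paths lying in $\Omega$ inside the slices $\mathbb C_I,\mathbb C_J,\mathbb C_K$ respectively, so the compositions $f\circ\gamma^I$, $f\circ\gamma^J$, $f\circ\gamma^K$ make sense as $\mathbb H$-valued paths. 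Then I would invoke the preceding theorem to conclude $f$ is path-slice, so by the equivalence (i)$\Leftrightarrow$(iv) in Proposition \ref{pr-ps} the desired identity holds pointwise along the whole path, in particular (and this is all the statement claims) it holds as an identity of the functions $t\mapsto f\circ\gamma^\bullet(t)$, or if one prefers just at the endpoint $t=1$.

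I do not expect a serious obstacle here, since the hard analytic work — showing that slice regularity forces the path-slice property, which is where analytic continuation along slice-paths and the Identity Principle enter — has already been carried out in the results quoted above. The only point requiring a little care is bookkeeping: making sure that the hypotheses "$\gamma\in\mathscr P(\mathbb C,\Omega)$ and $I,J,K\in\mathbb S(\gamma,\Omega)$" are exactly those under which Proposition \ref{pr-ps}(iv) is stated, so that the citation is legitimate, and noting that one does not even need $J,K$ to be distinct from $I$ — only $J\neq K$ is required for the matrix inversion. A remark worth adding at the end is that when $\Omega$ is an axially symmetric s-domain and $\gamma$ runs over the obvious radial paths, this specializes to the classical representation formula of \cite{Colombo2009001}, so Theorem \ref{th-rf} is a genuine generalization to the slice-topology setting.
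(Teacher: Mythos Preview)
Your derivation is correct and is exactly the route the paper's organization suggests: the paper states the preceding theorem (slice regular $\Rightarrow$ path-slice) and Proposition \ref{pr-ps}, whose item (iv) is verbatim the Representation Formula, so Theorem \ref{th-rf} follows immediately as a corollary; the paper itself does not spell out a separate proof but simply cites \cite{Dou2020001}. Your checks on the invertibility of $\begin{pmatrix}1&J\\1&K\end{pmatrix}$ for $J\neq K$ and on the hypotheses matching those of Proposition \ref{pr-ps}(iv) are the right bookkeeping, and no further argument is needed.
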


Another important concept is the one of domain of holomorphy for slice regular functions, namely the notion of domain of slice regularity.

\begin{defn}\label{df-dsr}
A slice-open set $\Omega\subset\mathbb{H}$ is called a domain of slice regularity if there are no slice-open sets $\Omega_1$ and $\Omega_2$ in $\mathbb{H}$ with the following properties.
	\begin{enumerate}[\upshape (i)]
		\item $\varnothing\neq\Omega_1\subset\Omega_2\cap\Omega$.
		\item $\Omega_2$ is slice-connected and not contained in $\Omega$.
		\item For any slice regular function $f$ on $\Omega$, there is a slice regular function $\widetilde{f}$ on $\Omega_2$ such that $f=\widetilde{f}$ in $\Omega_1$.
	\end{enumerate}
Moreover, if there are slice-open sets $\Omega,\Omega_1,\Omega_2$ satisfying (i)-(iii), then we call $(\Omega,\Omega_1,\Omega_2)$ a slice-triple.
\end{defn}
Here the situation is different from complex analysis, since not every slice-open set is a domain of slice regularity. It is true however that the $\sigma$-balls and axially symmetric slice-open sets are particular domains of slice regularity, see Proposition 9.5 and Proposition 9.6 in \cite{Dou2020001}. Below we illustrate an example:

\begin{exa}
The $\sigma$-ball $\Sigma(\frac{I}{2},1)$ is a domain of slice regularity  for any $I\in\mathbb{S}$.
 The function $f:\Sigma(\frac{I}{2},1)\rightarrow\mathbb{H}$ defined by
	\begin{equation*}
	f(q)=\sum_{n\in\mathbb{N}}(q-I/2)^{*2^n},
	\end{equation*}
	does not extend to a slice regular function near any point of the boundary in any slice $\mathbb{C}_J$, $J\in\mathbb{S}$.\\
\end{exa}
This example shows that there exist
 slice regular functions defined on a slice-open set which is not open in the Euclidean topology but it is open in the $\tau_\sigma$ topology. Moreover, there are examples of functions defined on slice-open sets which are neither open in the Euclidean topology nor in $\tau_\sigma$, as the Example \ref{exa314} shows. The example is obtained by further elaborating Example \ref{exa313} which comes from ideas in \cite{Dou2020001}, Section 8.
 \begin{exa}\label{exa313}
 We consider a ray  $\gamma_s: [0, 1)\longrightarrow  \mathbb{C}$,  $s\in[0,1]$ fixed,  by
\begin{equation*}
\gamma_s (t):=\frac{i}{2}+\frac{t}{1-t}e^{i(\frac{\pi }{4}+(s\pi)/2)}.
\end{equation*}
The ray starts from $\frac{i}{2}$ to $\infty$  and the angle between the ray and
the positive real axis
 is $\frac{\pi}{4}+s\frac{\pi}{2}$.

Given a continuous function  $$\varphi:\mathbb{S}\rightarrow[0,1],$$
we define a continuous function $F:\mathbb{S}\times[0,1)\rightarrow\mathbb{H}$ by setting
\begin{equation*}
F(I, s)=\mathcal{P}_I\circ\gamma_{\varphi(I)}(s).
\end{equation*}
The complement of  the image of $F$ is denoted by
\begin{equation*}
\Omega_{\varphi}:=\mathbb{H}\backslash F(\mathbb{S}\times[0,1)).
\end{equation*}
{Fix $J\in\mathbb{S}$.} We now define
\begin{equation}\label{Omegatilde}
	\widetilde{\Omega_\varphi}:=\Omega_\varphi\bigcup\gamma_\varphi[-J].
	\end{equation}
We proved in \cite{Dou2020001} that $\widetilde{\Omega_\varphi}$  is open in the Euclidean topology if $\varphi$ is continuous. However it may be not open if $\varphi$ is not continuous. For example, let $\varphi:\mathbb{S}\rightarrow[0,1]$ be the Dirichlet type function, i.e.
\begin{equation*}
	\varphi(I)=\begin{cases}
		1,& |I-J|\in\mathbb{Q};
		\\ 0,& \mbox{otherwise}.
	\end{cases}
\end{equation*}
Let $I\in\mathbb{S}$ with $I\neq\pm J$ and $|I-J|\in\mathbb{Q}$, where $J$ is fixed. Then $\varphi(I)=1$ and
\begin{equation*}
	\left(\widetilde{\Omega_\varphi}\right)\cap\mathbb{C}_I^+=\mathbb{C}_I^+\backslash \left(\mathcal{P}_I\circ\gamma_1[0,1]\right)\qquad\mbox{and}\qquad \mathcal{P}_I\circ\gamma_0(0,1]\subset\left(\widetilde{\Omega_\varphi}\right)_I.
\end{equation*}
We choose a sequence $\{K_\ell\}_{\ell=1}^{+\infty}\subset\mathbb{S}\backslash\{\pm J\}$, such that $|K_\ell-J|\notin\mathbb{Q}$ and
\begin{equation*}
	\lim_{\ell\rightarrow +\infty} K_\ell=I.
\end{equation*}
Notice that $\varphi(K_\ell)=0$ and
\begin{equation*}
	\widetilde{\Omega_\varphi}\cap\mathbb{C}_{K_\ell}^+=\mathbb{C}^+_{K_\ell}\backslash(\mathcal{P}_{K_\ell}\circ\gamma_0(0,1]).
\end{equation*}
Notice that for each $t\in(0,1]$, we have
\begin{equation*}
	\lim_{\ell\rightarrow +\infty}\mathcal{P}_{K_\ell}\circ\gamma_0(t)=\mathcal{P}_I\circ\gamma_0(t).
\end{equation*}
It is clear that $\mathcal{P}_I\circ\gamma_0(t)$ is not an interior point of $\Omega$ in the Euclidean topology and $\Omega$ is not open in the Euclidean topology.

We now consider the function
 \begin{equation}\label{eq-fp647}
\Psi (z)=\sqrt{2z-J}, \qquad  \forall\  z\in \frac{J}{2}+\mathbb{R}_+,
\end{equation}
where $z$ is a complex variable. This function admits  a unique holomorphic extension $\Psi_s$ on $$\mathbb{C}_J\backslash(\gamma_s[J]\cup\gamma_s[-J]), $$
where
  $$\gamma_s[J]:=\mathcal{P}_J\circ\gamma_s([0,1))$$
 for any $s\in[0,1]$.

The function $\Psi_\varphi:\Omega_\varphi\rightarrow\mathbb{H}$ defined by
	\begin{equation}\label{eq-fp}
	\Psi_\varphi(x+yI):=
		\frac{1-IJ}{2}\Psi_{\varphi(I)}(x+yJ)+\frac{1+IJ}{2}\Psi_{\varphi(I)}(x-yJ),
	\end{equation}
	for $y\ge 0$, is the unique slice regular extension of $\Psi$ to $\Omega_\varphi$. In particular, $$(\Psi_\varphi)_J=\Psi_{\varphi(J)}.$$

Moreover, $\widetilde{\Psi_\varphi}$  is a slice regular function defined on {a} non-open set and it fails to be extended slice-regularly to a larger slice-open set (or open set) in $\mathbb{H}$. Note that $\widetilde{\Omega_\varphi}$ is open in $\tau_\sigma$.
\end{exa}
\begin{exa}\label{exa314}
  Using the notations of the previous example,
 we now construct an example of a set $\widetilde{\Omega_\varphi}$ of the form given in \eqref{Omegatilde}, such that $\widetilde{\Omega_\varphi}$ is not in $\tau_s$ while it is in $\tau_\sigma$. We take a new curve $\gamma_s$, $s\in(0,1]$ which consist of three segments. We define
\begin{equation*}
	\gamma_s(t)=\begin{cases}
		\dfrac{1-(3-s)t}{2} i,& t\in[0,\frac{1}{3}],\ (\mbox{a line segment from $\frac{i}{2}$ to $\frac{si}{6}$}),
		\\3\left[\dfrac{2}{3}-t\right]\dfrac{si}{6}+3[t-\frac{1}{3}](i-1),& t\in(\frac{1}{3},\frac{2}{3}],\ (\mbox{a line segment from $\frac{si}{6}$ to $i-1$}),
		\\\dfrac{i}{1-t}-2i-1,& t\in(\frac{2}{3},1),\ (\mbox{a ray from $i-1$}).
	\end{cases}
\end{equation*}
Consider $\varphi:\mathbb{S}\rightarrow[0,1]$, defined by
\begin{equation*}
	\varphi(I)=\begin{cases}
		1,& I=\pm J,
		\\\frac{|I-J|}{2},& \mbox{otherwise}.
	\end{cases}
\end{equation*}
With this choice of $\varphi$, it is possible to prove that
\begin{equation*}
	\sigma\left(0,\mathbb{H}\backslash\widetilde{\Omega_\varphi}\right)\le\lim_{K\rightarrow I,K\neq\pm I} \mbox{dist}_{\mathbb{C}_K}(0,\mathcal{P}_K\circ\gamma_{\varphi(K)}[0,1))=0,
\end{equation*}
so that
 the point $0$ is not an interior point in $\widetilde{\Omega_\varphi}$ in the topology $\tau_\sigma$. The function $\widetilde{\Psi_\varphi}$ defined above is a slice regular function defined in a non-open set and it could not be extended slice regularly to a larger slice-open set (or open set or open set in $\tau_\sigma$) in $\mathbb{H}$.

Finally, we point out that by modifying the functions $\gamma_s$ and $\Psi$, one can construct many similar functions which are defined on sets in $\tau_s\backslash\tau_\sigma$.
\end{exa}

\section{Slice topology in cones on real alternative algebras}\label{sc-srla}

In \cite{Ghiloni2011001}, Ghiloni and Perotti introduced slice regular functions with values in a real alternative algebra, finite dimensional and with a fixed anti-involution, using stem functions. Stem functions were used in the literature also in relation with the Fueter mapping theorem, see \cite{DentoniSce,Fueter1934001,Rinehart1960001, Sce}. The slice regular functions in \cite{Ghiloni2011001} coincide with the class of slice regular functions over the quaternions \cite{Gentili2006001} and with the class of slice monogenic functions with values in a Clifford algebra, see \cite{Colombo2009002}, on some special open sets.

In this section, following \cite{Dou2020004}, we introduce a class of slice regular functions on a finite-dimensional real alternative algebra $A$ following the original idea of Gentili and Struppa, i.e. following Definition \ref{slreg}. Thus the functions slice regular in this sense do not coincide with those ones studied in \cite{Ghiloni2011001} and for this reason we sometimes called them weak slice regular and those ones in \cite{Ghiloni2011001} strong slice regular.

We recall that an algebra over the real numbers is said to be alternative if for any pair of elements  $x,y$ in the algebra
$$
x(xy)=x^2y,\qquad (xy)y=xy^2.
$$
It is immediate that every associative algebra is alternative. The converse does not hold, however every alternative algebra is power associative.

A real algebra $A$ is called left alternative, if for any $x,y\in A$,
\begin{equation*}
	x(xy)=(xx)y.
\end{equation*}
(See  \cite{Albert1949001}  for the notion of right alternative algebra, which can obviously adapted to our case).

 From now on we shall assume the following:
\begin{ass}\label{as-wa}
	Assume that $A\neq\{0\}$ is a finite-dimensional real unital left alternative algebra with $\mathbb{S}_A\neq\varnothing$, where
\begin{equation*}
	\mathbb{S}_A:=\{a\in A:a^2=-1\}.
\end{equation*}
\end{ass}
We  then define a map $L:A\rightarrow \textrm{End}_\mathbb{R}(A)$ by
\begin{equation}\label{La}
	L:a\mapsto L_a,
\end{equation}
where $L_a:A\rightarrow A$ is the right linear map given by the left multiplication by $a$
\begin{equation*}
	L_a:x\mapsto ax.
\end{equation*}

Note that for any $I\in\mathbb{S}_A$, we have
\begin{equation*}
	(L_I)^2(x)=I(Ix)=-x,
\end{equation*}
thus $L_I$ is a complex structure on $A$.

Let $I,J\in\mathbb{S}_A$ with $I\neq\pm J$. It is easy to check that
\begin{equation*}
	\mathbb{C}_I\cap\mathbb{C}_J=\mathbb{R}.
\end{equation*}

\subsection{Slice topology}

In this subsection, we discuss the slice topology whose definition, originally given over the quaternions, can  be extended to a suitable set $Q_A$ that we call the quadratic cone in the real alternative algebra $A$:
\begin{equation*}
	Q_A:=\bigcup_{I\in\mathbb{S}_A}\mathbb{C}_I.
\end{equation*}

\begin{lem} The set
	$$\tau_s(Q_A):=\{\Omega\subset Q_A:\Omega\ \mbox{is slice-open}\}$$ is a topology of $Q_A$.
\end{lem}

\begin{defn}
	We call $\tau_s(Q_A)$ the slice topology on $Q_A$. Open sets, connected sets and paths in the slice topology are called  slice-open sets, { slice-connected sets} and slice-paths.
\end{defn}

Since $A$ is finite-dimensional real vector space of dimension, say, there is a Euclidean topology on $A$ identified with Euclidean space $\mathbb R^m$. Since $A$ admits a complex structure, $m$ is an even number and so $m=2n$. The cone $Q_A$, as a subset of $A$, also has a Euclidean topology (i.e. the subspace topology induced by $A$).

As in the quaternionic case, we do not use the terminology slice-domain to denote a domain in the slice topology, and we will use instead the term {\em slice topology-domain}, in short, {\em st-domain}.

\begin{defn}
	A set  $\Omega$ in $Q_A$ is called classical slice domain, in short s-domain,   if  $\Omega$ is a domain in $Q_A$ in the Euclidean topology,
	$$\Omega_\mathbb{R}:=\Omega\cap \mathbb R\neq\varnothing, $$ and $\Omega_I$  is a domain in $\mathbb{C}_I$ for any $I\in\mathbb{S}_A$.
\end{defn}

The slice topology on $A$ has similar properties of the slice topology in $\mathbb{H}$ and in particular:

\begin{prop}
	$(Q_A,\tau_s)$ is a Hausdorff space and $\tau\subset\tau_s$.
\end{prop}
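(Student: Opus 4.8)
The plan is to establish the two assertions separately, deducing the Hausdorff property from the inclusion $\tau\subset\tau_s$ together with the metrizability of the ambient Euclidean structure.

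First I would prove $\tau\subset\tau_s$. Here $\tau=\tau(Q_A)$ is the subspace topology on $Q_A$ induced by the Euclidean topology of $A\cong\mathbb{R}^{2n}$. Let $\Omega\in\tau$, so $\Omega=V\cap Q_A$ for some Euclidean-open $V\subset A$. For each $I\in\mathbb{S}_A$ one has $\mathbb{C}_I\subset Q_A$, hence
\[
\Omega_I=\Omega\cap\mathbb{C}_I=V\cap Q_A\cap\mathbb{C}_I=V\cap\mathbb{C}_I,
\]
which is open in the subspace (Euclidean) topology of $\mathbb{C}_I$. By Definition \ref{df-so}, $\Omega$ is slice-open, i.e. $\Omega\in\tau_s(Q_A)$; therefore $\tau\subset\tau_s$. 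As in the quaternionic case this inclusion is in fact strict, by an example analogous to Example 3.2 of \cite{Dou2020001}, but this is not needed for the statement.

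Next I would deduce the Hausdorff property. Since $A$ is a finite-dimensional real vector space, its Euclidean topology is metrizable, and hence so is the subspace topology $\tau=\tau(Q_A)$; in particular $(Q_A,\tau)$ is Hausdorff. Given distinct points $p,q\in Q_A$, choose disjoint $U,V\in\tau$ with $p\in U$ and $q\in V$. By the inclusion just proved, $U,V\in\tau_s(Q_A)$, so they are disjoint slice-open neighbourhoods of $p$ and $q$. Hence $(Q_A,\tau_s)$ is Hausdorff.

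There is no serious obstacle here: the whole content is the elementary remark that each slice $\mathbb{C}_I$ is contained in $Q_A$, so that restricting a Euclidean-open subset of $A$ to a slice produces a relatively open set, together with the standard fact that any topology finer than a Hausdorff (here metrizable) one is again Hausdorff. The only point deserving a line of care is to note that ``open in $\mathbb{C}_I$'' in Definition \ref{df-so} refers precisely to the subspace topology that $\mathbb{C}_I$ carries inside $A$, which is exactly what makes the first computation immediate.
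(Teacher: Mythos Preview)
Your proof is correct. The paper does not actually supply a proof for this proposition; it merely states it, remarking that the slice topology on $Q_A$ has properties analogous to the quaternionic case treated in \cite{Dou2020001}. Your argument---showing $\tau\subset\tau_s$ by intersecting with each slice, then deducing Hausdorffness from the fact that any topology finer than a Hausdorff one is again Hausdorff---is exactly the natural verification the paper has in mind, and it requires no modification.
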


We remark that the slice topology $\tau_s$ is not always strictly finer  than the Euclidean topology $\tau$ as the following simple example shows.

\begin{exa}
	$\mathbb{C}$ is an algebra satisfing Assumption \ref{as-wa}. The slice topology and Euclidean topology coincide on $\mathbb{C}$.
\end{exa}
The terminology and the results in Section 3 can be stated in this more general setting, so we do not repeat them and we refer the reader to \cite{Dou2020004}. We only mention the following results:

\begin{prop}
	The topological space $(Q_A,\tau_s)$ is connected, local path-connected and path-connected.
\end{prop}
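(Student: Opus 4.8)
The plan is to follow, almost verbatim, the scheme used over the quaternions, relying on the analogues over $Q_A$ of Propositions~\ref{pr-sor} and \ref{pr-rcsd} (established in \cite{Dou2020004}). The first thing I would record is a topological observation: for every $I\in\mathbb{S}_A$ the subspace topology induced by $\tau_s$ on $\mathbb{C}_I$ is contained in the Euclidean topology of $\mathbb{C}_I$, because if $\Omega$ is slice-open then $\Omega\cap\mathbb{C}_I=\Omega_I$ is Euclidean-open by Definition~\ref{df-so}. Consequently every Euclidean-continuous map $[0,1]\to\mathbb{C}_I$ is continuous as a map into $(Q_A,\tau_s)$; in other words, every path lying on a single slice is a slice-path. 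Since $\mathbb{S}_A\neq\emptyset$ by Assumption~\ref{as-wa}, we also have $\mathbb{R}\subset\mathbb{C}_I\subset Q_A$ for any such $I$.

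For path-connectedness I would argue directly. Given $p,q\in Q_A$, choose $I,J\in\mathbb{S}_A$ with $p\in\mathbb{C}_I$ and $q\in\mathbb{C}_J$. As $\mathbb{C}_I$ is Euclidean-homeomorphic to $\mathbb{C}$, hence path-connected, there is a path $\alpha$ in $\mathbb{C}_I$ from $p$ to a real point $x$; similarly a path $\beta$ in $\mathbb{C}_J$ from a real point $y$ to $q$; and the real segment from $x$ to $y$ is a path $\mu$ inside $\mathbb{R}$. By the first paragraph each of $\alpha,\mu,\beta$ is a slice-path, and so is the concatenation $\alpha\mu\beta$, which joins $p$ to $q$. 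Thus $(Q_A,\tau_s)$ is path-connected, and a fortiori connected.

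For local path-connectedness I would produce a basis of path-connected open sets. Let $\Omega$ be slice-open and $q\in\Omega$. By the $Q_A$-analogue of Proposition~\ref{pr-sor} there is a real-connected st-domain $U$ with $q\in U\subset\Omega$; by the $Q_A$-analogue of Proposition~\ref{pr-rcsd}(iv), any two points of $U$ are joined inside $U$ by a slice-path built as the concatenation of two paths on a slice (which are slice-paths, again by the first paragraph). Hence $U$ is path-connected in $\tau_s$, and the real-connected st-domains form a basis of $\tau_s$ consisting of path-connected open sets, which is precisely local path-connectedness.

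The only non-routine ingredients are the analogues over $Q_A$ of Propositions~\ref{pr-sor} and \ref{pr-rcsd}; carrying their quaternionic proofs over to the quadratic cone is where the real work lies, and it rests on the structural identity $\mathbb{C}_I\cap\mathbb{C}_J=\mathbb{R}$ for $I\neq\pm J$ together with the description of connected neighbourhoods near $\mathbb{R}$. Granting these facts (as done in \cite{Dou2020004}), the three assertions follow exactly as above with no further computation.
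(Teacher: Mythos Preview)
Your proposal is correct and follows essentially the same approach as the paper: in the quaternionic case the paper derives the result directly from Propositions~\ref{pr-sor} and~\ref{pr-rcsd}(iv), and in the $Q_A$ setting it simply states that the same arguments carry over (referring to \cite{Dou2020004}), which is precisely what you do. Your direct slice-by-slice argument for global path-connectedness is a minor expository variant of applying Proposition~\ref{pr-rcsd}(ii)/(iv) to $Q_A$ itself, but the substance is the same.
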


The notion of st-domain is also a generalization of the notion of s-domain.

\begin{cor}
	A set $\Omega\subset Q_A$ is an st-domain if $\Omega_\mathbb{R}\neq\varnothing$ and $\Omega_I$ is a domain in $\mathbb{C}_I$ for each $I\in\mathbb{S}_A$.
\end{cor}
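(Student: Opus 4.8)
The plan is to mimic, essentially verbatim, the proof of the analogous statement in the quaternionic case, namely Corollary \ref{co-dsd}, since the hypotheses and conclusion are formally identical once $\mathbb{S}$ is replaced by $\mathbb{S}_A$ and $\mathbb{H}$ by $Q_A$. First I would observe that the hypothesis ``$\Omega_I$ is a domain in $\mathbb{C}_I$ for each $I\in\mathbb{S}_A$'' already forces $\Omega$ to be slice-open (each $\Omega_I$ being open in $\mathbb{C}_I$ is exactly Definition \ref{df-so}), and moreover forces $\Omega$ to be real-connected: indeed $\Omega_{\mathbb{R}}=\Omega_I\cap\mathbb{R}$ for any fixed $I$, and since $\mathbb{C}_I\cap\mathbb{C}_J=\mathbb{R}$ for distinct $I,J\in\mathbb{S}_A$ (a fact recorded just before Assumption \ref{as-wa}), the real part $\Omega_{\mathbb{R}}$ is the intersection of the connected open set $\Omega_I$ with $\mathbb{R}$; by the standard argument, since $\Omega_I$ is a domain in $\mathbb{C}_I\cong\mathbb{C}$ meeting $\mathbb{R}$, the trace $\Omega_I\cap\mathbb{R}$ is an interval, hence connected, so $\Omega$ is real-connected.

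The core of the argument is then to prove slice-connectedness. Fixing two points $p,q\in\Omega$, the strategy is to join each of them to a real point by a slice-path and then to connect the two real points inside $\Omega_{\mathbb{R}}$. Concretely: by Proposition \ref{pr-sor} (which carries over to $Q_A$, as the excerpt states that the results of Section 3 hold in this generality), choose a real-connected st-domain $U_p\subset\Omega$ containing $p$ and similarly $U_q\subset\Omega$ containing $q$. Since $\Omega_{\mathbb{R}}\neq\emptyset$ and $U_p$, $U_q$ are real-connected, the argument of Proposition \ref{pr-sor} (taking $A$ to be the connected component of $\Omega_{\mathbb{R}}$) can be arranged so that $U_p$ and $U_q$ both contain points of $\Omega_{\mathbb{R}}$; alternatively one applies Proposition \ref{pr-rcsd}(ii) inside $U_p$ to get a path on a slice from $p$ to some $x\in(U_p)_{\mathbb{R}}\subset\Omega_{\mathbb{R}}$, and likewise a path on a slice from $q$ to some $y\in\Omega_{\mathbb{R}}$. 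Finally, because $\Omega_{\mathbb{R}}$ is connected in $\tau(\mathbb{R})$ — it is an interval by the first paragraph — there is a path in $\Omega_{\mathbb{R}}\subset\mathbb{C}_I$ from $x$ to $y$, which is a path on a slice hence a slice-path. Concatenating the three slice-paths yields a slice-path in $\Omega$ from $p$ to $q$, so $\Omega$ is slice-connected; being slice-open and slice-connected, it is an st-domain.

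The main obstacle, such as it is, is purely bookkeeping: one must be sure that the structural results quoted for $\mathbb{H}$ in Section 3 — Proposition \ref{pr-sor}, Proposition \ref{pr-rcsd}, and the fact that a path on a slice is a slice-path — genuinely transfer to $Q_A$ under Assumption \ref{as-wa}. The only algebraic input those proofs use beyond set-theoretic topology is the identity $\mathbb{C}_I\cap\mathbb{C}_J=\mathbb{R}$ for $I\neq\pm J$ and the fact that each $L_I$ ($I\in\mathbb{S}_A$) is a complex structure making $\mathbb{C}_I$ a copy of $\mathbb{C}$; both are established in Section \ref{sc-srla}. With these in hand the proofs are word-for-word the quaternionic ones, and indeed the excerpt explicitly says ``the terminology and the results in Section 3 can be stated in this more general setting''. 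Thus the proof reduces to invoking the $Q_A$-versions of Propositions \ref{pr-sor} and \ref{pr-rcsd} and stitching the slice-paths together as above; no genuinely new idea is required.
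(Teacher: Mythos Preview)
Your argument contains a genuine error and an unnecessary detour. The claim that ``since $\Omega_I$ is a domain in $\mathbb{C}_I\cong\mathbb{C}$ meeting $\mathbb{R}$, the trace $\Omega_I\cap\mathbb{R}$ is an interval'' is false: the punctured plane $\mathbb{C}\setminus\{0\}$ is a domain whose real trace is $(-\infty,0)\cup(0,\infty)$, and an annulus about the origin gives two bounded disjoint intervals. Thus your assertion that $\Omega$ is real-connected is unwarranted, and consequently the final step---connecting the two real points $x$ and $y$ by a path inside $\Omega_{\mathbb{R}}$---may fail. Moreover, your invocation of Propositions~\ref{pr-sor} and~\ref{pr-rcsd} does not produce what you want: for $p\notin\mathbb{R}$ the real-connected st-domain $U_p$ supplied by Proposition~\ref{pr-sor} is (by the construction recorded there) the slice-connected component of $(\Omega\setminus\Omega_{\mathbb{R}})$ containing $p$, which has $(U_p)_{\mathbb{R}}=\emptyset$; Proposition~\ref{pr-rcsd}(ii) then does not apply, so you have not actually reached a point of $\Omega_{\mathbb{R}}$ this way.

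The repair is to drop both the real-connectedness claim and the detour through the structural propositions, and instead use the hypothesis directly. Fix once and for all a point $x_0\in\Omega_{\mathbb{R}}$. Given any $p\in\Omega$, choose $I\in\mathbb{S}_A$ with $p\in\mathbb{C}_I$; then $x_0\in\mathbb{R}\subset\mathbb{C}_I$, so both $p$ and $x_0$ lie in $\Omega_I$, which is by hypothesis a domain in $\mathbb{C}_I$ and hence path-connected. A path in $\Omega_I$ from $p$ to $x_0$ is a path on a slice, hence a slice-path. Thus every point of $\Omega$ is joined to the single point $x_0$ by a slice-path, and $\Omega$ is slice-connected. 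This is the argument the paper has in mind; note that connectedness of $\Omega_{\mathbb{R}}$ is never needed.
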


\subsection{Main results in left alternative algebra}
The slice regularity given in Definition \ref{slreg} can be extended to the case of a left alternative algebra satisfying Assumption \ref{as-wa}, and in this subsection we state some main results about this class of functions.
\begin{defn}\label{slregA}
	Let $\Omega\in\tau_s(Q_A)$. A function $f:\Omega\rightarrow A$ is called slice regular if and only if for each $I\in\mathbb{S}_A$, $f_I:=f|_{\Omega_I}$ is (left $I$-)holomorphic, i.e. $f_I$ is real differentiable and
	\begin{equation*}
		\frac{1}{2}\left(\frac{\partial}{\partial x}+I\frac{\partial}{\partial y}\right)f_I(x+yI)=0,\qquad\mbox{on}\qquad\Omega_I.
	\end{equation*}
\end{defn}
Let $A$ be a left alternative algebra and $I\in\mathbb{S}_A$. The set $\{\theta_1,...,\theta_n\}\subset A$ is called an $I$-basis, if
\begin{equation*}
	\{\theta_1,I(\theta_1),\theta_2,I(\theta_2),...,\theta_n,I(\theta_n)\}
\end{equation*}
is a real basis of $A$. Since $L_J$ (see \eqref{La}) is a complex structure on $A$, there is {a} $J$-basis for all $J\in\mathbb{S}_A$.
\smallskip

Various results that we have stated in the quaternionic case, e.g. the Splitting Lemma and the Identity Principle, hold also in this more general case.

In the sequel, we need the following notations and definitions: for any $I\in\mathbb{S}_A$ we can define an isomorphism $\mathcal{P}_I:\ \mathbb{C}\longrightarrow \mathbb{C}_I$ such that {$\mathcal P_I(x+yi)= x+yI$}.
For each path $\gamma:[0,1]\rightarrow\mathbb{C}$, we define its corresponding path in $\mathbb{C}_I^d$ by
\begin{equation*}
	\gamma^{I}:=\mathcal{P}^I\circ\gamma.
\end{equation*}
Finally, for any $\Omega\subset Q_{A}$ and $\gamma\in\mathscr{P}(\mathbb{C})$ we set
\begin{equation*}
	\mathscr{P}(\mathbb{C},\Omega):=\{\delta\in\mathscr{P}(\mathbb{C}):\exists\ I\in\mathbb{S}_A,\mbox{ s.t. }\delta^{I}\subset\Omega\}
\end{equation*}
and
\begin{equation*}
	\mathbb{S}_A(\gamma,\Omega):=\{I\in\mathbb{S}_A:\gamma^{I}\subset\Omega\}.
\end{equation*}
Mimicking Definition  \ref{Def36}, given $\Omega\subset Q_{A}$, we say that a function $f:\Omega\rightarrow A$ is path-slice if there is a function $F:\mathscr{P}(\mathbb{C},\Omega)\rightarrow A^{2\times 1}$ such that
	\begin{equation*}
		f\circ\gamma^{I}(1)=(1,L_I)F(\gamma),
	\end{equation*}
	for each $\gamma\in\mathscr{P}(\mathbb{C},\Omega)$ and $I\in\mathbb{S}_A(\gamma,\Omega)$.
	
We have the following results:
\begin{thm}
	Let $\Omega\in\tau_s(Q_{A})$ and $f:\Omega\rightarrow A$ be slice regular. Then $f$ is path-slice.
\end{thm}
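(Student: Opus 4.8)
The plan is to reduce the statement to a pointwise linear-algebra fact about the values of $f$ along pairs of paths that differ only in the choice of imaginary unit, in complete analogy with the quaternionic Representation Formula (Theorem~\ref{th-rf}) and the characterization of path-slice functions (Proposition~\ref{pr-ps}). Concretely, I would define the candidate path-slice stem function $F:\mathscr{P}(\mathbb{C},\Omega)\to A^{2\times1}$ and then verify the defining identity $f\circ\gamma^I(1)=(1,L_I)F(\gamma)$ for every admissible pair $(\gamma,I)$.

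First I would fix $\gamma\in\mathscr{P}(\mathbb{C},\Omega)$ and pick any two distinct $J,K\in\mathbb{S}_A(\gamma,\Omega)$ (such a pair exists, or else $\mathbb{S}_A(\gamma,\Omega)$ is a singleton and there is nothing to prove beyond setting $F(\gamma)$ so that the single equation holds). The key point is that the real-linear map $A^2\to A$, $(u,v)\mapsto u + L_J(v)$ combined with $(u,v)\mapsto u+L_K(v)$ gives an invertible real-linear map $A^2\to A^2$ whenever $J\neq \pm K$: indeed, $u+L_J v = u+L_K v$ forces $L_{J-K}v=0$, and since $L_{J-K}=L_J-L_K$ is, up to the nonzero real scalar coming from a suitable normalization, conjugate to a complex structure (this uses left alternativity through the identity $L_I^2=-\mathrm{id}$ for $I\in\mathbb{S}_A$ and the computation $\mathbb{C}_I\cap\mathbb{C}_J=\mathbb{R}$ noted in the excerpt), it is invertible, so $v=0$ and then $u=0$. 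Hence there is a unique $F(\gamma)=(a,b)^{T}\in A^{2\times1}$ with
\[
\begin{pmatrix} a + L_J b \\ a + L_K b\end{pmatrix} = \begin{pmatrix} f\circ\gamma^J(1) \\ f\circ\gamma^K(1)\end{pmatrix}.
\]
The content of the theorem is then that for \emph{any other} $I\in\mathbb{S}_A(\gamma,\Omega)$ one has $f\circ\gamma^I(1) = a + L_I b$, and that the resulting $F(\gamma)$ does not depend on the auxiliary choice of $J,K$; the latter independence follows formally once the former is established, since two different choices both satisfy the same linear relation at every $I$ in a set containing at least two points.

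The heart of the argument — and the step I expect to be the main obstacle — is proving $f\circ\gamma^I(1)=a+L_I b$ for a third unit $I$. Here I would \emph{not} try to argue along the whole path at once; instead I would use a connectedness/monodromy argument on the parameter interval $[0,1]$. For a fixed $I$, consider the set of $t\in[0,1]$ for which the analogous identity holds for the truncated path $\gamma|_{[0,t]}$. This set is nonempty (it contains $t=0$, where $\gamma(0)\in\mathbb{R}$ and all three slices agree), and it is closed by continuity of $f$ along each slice. For openness, near a parameter value one works inside a small Euclidean ball in a single slice $\mathbb{C}_I$ together with the corresponding balls in $\mathbb{C}_J$ and $\mathbb{C}_K$; on such balls the Extension Theorem (in its algebra version, which the excerpt asserts carries over) produces a slice regular function agreeing with $f$ on the relevant slices, and the Identity Principle forces the representation identity to propagate. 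The left-alternativity hypothesis enters precisely in guaranteeing that $L_I$ is a complex structure so that the local holomorphic theory on each $\mathbb{C}_I$ behaves exactly as in the associative case; the non-associativity of $A$ is harmless because every identity used involves only left multiplications by a \emph{single} imaginary unit at a time, i.e.\ lives inside the associative (indeed commutative) subalgebra $\mathbb{C}_I$. Once the openness step is in place, connectedness of $[0,1]$ gives the identity at $t=1$, completing the proof; I would then remark that this simultaneously establishes the algebra analogue of the Representation Formula, exactly as in Theorem~\ref{th-rf}.
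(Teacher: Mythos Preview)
Your argument contains a genuine gap at the invertibility step. You assert that for $J,K\in\mathbb{S}_A$ with $J\neq\pm K$ the operator $L_J-L_K=L_{J-K}$ is invertible, reasoning that $J-K$ is, up to a real scalar, again an element of $\mathbb{S}_A$. This is false in a general left alternative algebra: $(J-K)^2=-2-(JK+KJ)$, and nothing in the axioms forces $JK+KJ$ to be a real scalar. A concrete counterexample is $A=\mathbb{H}\times\mathbb{H}$ (associative, hence left alternative, unital with unit $(1,1)$): take $J=(i,i)$ and $K=(i,-i)$, both in $\mathbb{S}_A$; then $J-K=(0,2i)$ annihilates $(1,0)$, so $L_J-L_K$ has nontrivial kernel even though $J\neq\pm K$. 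The paper is aware of this obstruction: Proposition~\ref{pr-cp} states the two-unit representation formula only under the \emph{explicit hypothesis} that $L_{J_1}-L_{J_2}$ be invertible, not as an automatic consequence of $J_1\neq\pm J_2$.

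The paper's route around this (carried out in the general framework of Section~\ref{sc-dsr} and the preceding section, following \cite{Dou2020004}) is to replace the pair $(J,K)$ by a tuple $J=(J_1,\dots,J_k)$ drawn from $\mathcal{C}(\Omega,\gamma)$ and chosen so that $\bigcap_\ell\ker(1,J_\ell)=\{0\}$ --- a \emph{slice-solution}, whose existence is guaranteed in general (Remark~\ref{rm-lj} and the proposition following it) --- and to replace the ordinary inverse by the Moore--Penrose pseudoinverse $\zeta^+(J)$. The candidate stem value is then $F(\gamma):=\zeta^+(J)\,(f\circ\gamma^{J_1},\dots,f\circ\gamma^{J_k})^T$, and one verifies $f\circ\gamma^I=(1,I)F(\gamma)$ for every $I$ via exactly the local-extension-plus-Identity-Principle mechanism you sketch (Lemmas~\ref{lm-los} and \ref{lm-lut}, Theorem~\ref{th-prf}, Corollary~\ref{co-lo}). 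Your monodromy scheme on $[0,1]$ is essentially the right engine for that verification; what is missing is the correct linear-algebraic setup, which in general requires more than two imaginary units and a pseudoinverse rather than an inverse.
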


Let $J,K\in\mathbb{S}_A$, we have
\begin{equation*}
	L_J(L_J-L_K)=-1-L_JL_K=L_KL_K-L_JL_K=(L_K-L_J)L_K.
\end{equation*}
If $L_J-L_K$ is invertible, then
\begin{equation*}
	(L_J-L_K)^{-1}L_J=-L_K(L_J-L_K)^{-1}.
\end{equation*}

One can easily verify that
\begin{equation}\label{L-1}
	\left(\begin{matrix}
		1&L_J\\1&L_K
	\end{matrix}\right)^{-1}
	=\left(\begin{matrix}
		(L_J-L_K)^{-1}L_J&-(L_J-L_K)^{-1}L_K\\(L_J-L_K)^{-1}&-(L_J-L_K)^{-1}
	\end{matrix}\right).
\end{equation}

\begin{prop}\label{pr-cp}
	(Standard path-representation Formula) Let $\Omega$ be a slice-open set in $\mathcal{W}_A^1$, $f:\Omega\rightarrow A$ be a weak slice regular function, $\gamma\in\mathscr{P}(\mathbb{C}^1,\Omega)$ and $I,J_1,J_2\in \mathbb{S}_A(\gamma,\Omega)$ with $L_{J_1}-L_{J_2}$ invertible. Then
	\begin{equation}\label{eq-fg}
		f\circ\gamma^I=(1,L_I)
		\begin{pmatrix}
			1&L_{J_1}\\1&L_{J_2}
		\end{pmatrix}^{-1}
		\begin{pmatrix}
			f\circ\gamma^{J_1}\\f\circ\gamma^{J_2}
		\end{pmatrix},
	\end{equation}
	where $\begin{pmatrix}
		1&L_{J_1}\\1&L_{J_2}
	\end{pmatrix}^{-1}$ satisfies \eqref{L-1} and $\mathbb{S}_A(\gamma,\Omega):=\{I\in\mathbb{S}_A:Ran(\gamma^I)\subset\Omega\}$.
\end{prop}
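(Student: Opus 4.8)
The statement to prove is Proposition \ref{pr-cp}, the standard path-representation formula in the left alternative setting. The plan is to reduce it to the already-established path-slice property: by the theorem just stated, every weak slice regular $f$ on a slice-open $\Omega\subset Q_A$ is path-slice, so there is a stem function $F:\mathscr{P}(\mathbb{C},\Omega)\to A^{2\times 1}$ with $f\circ\gamma^I(1)=(1,L_I)F(\gamma)$ for all $\gamma$ and all $I\in\mathbb{S}_A(\gamma,\Omega)$. The key observation is that this identity holds pointwise along the whole path, not just at the endpoint: for each $t\in[0,1]$ the restricted path $\gamma|_{[0,t]}$ (suitably reparametrised to $[0,1]$) again lies in $\mathscr{P}(\mathbb{C},\Omega)$ with the same admissible imaginary units, so $f\circ\gamma^I(t)=(1,L_I)F(\gamma_{[0,t]})$; writing $F(\gamma_{[0,t]})=\bigl(\begin{smallmatrix}a(t)\\ b(t)\end{smallmatrix}\bigr)\in A^{2\times1}$ we get the functional identity $f\circ\gamma^I = a + L_I b$ of $A$-valued functions on $[0,1]$, valid simultaneously for $I=J_1$ and $I=J_2$.

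From there the argument is linear algebra in $\mathrm{End}_{\mathbb R}(A)$. Applying the identity for $J_1$ and $J_2$ gives the system
\begin{equation*}
\begin{pmatrix} f\circ\gamma^{J_1}\\ f\circ\gamma^{J_2}\end{pmatrix}
=\begin{pmatrix} 1 & L_{J_1}\\ 1 & L_{J_2}\end{pmatrix}
\begin{pmatrix} a\\ b\end{pmatrix}.
\end{equation*}
By hypothesis $L_{J_1}-L_{J_2}$ is invertible, and the block formula \eqref{L-1} exhibits the explicit inverse of $\bigl(\begin{smallmatrix}1 & L_{J_1}\\ 1 & L_{J_2}\end{smallmatrix}\bigr)$; one should check \eqref{L-1} by multiplying out, using the relation $L_{J}(L_{J}-L_{K})=(L_{K}-L_{J})L_{K}$ derived just above it (this is where left alternativity enters, via $L_J^2=-\mathrm{Id}$). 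Hence $\bigl(\begin{smallmatrix}a\\ b\end{smallmatrix}\bigr)=\bigl(\begin{smallmatrix}1 & L_{J_1}\\ 1 & L_{J_2}\end{smallmatrix}\bigr)^{-1}\bigl(\begin{smallmatrix}f\circ\gamma^{J_1}\\ f\circ\gamma^{J_2}\end{smallmatrix}\bigr)$, and substituting into $f\circ\gamma^I=a+L_I b=(1,L_I)\bigl(\begin{smallmatrix}a\\ b\end{smallmatrix}\bigr)$ yields exactly \eqref{eq-fg}.

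The main obstacle — really the only subtle point — is justifying that the endpoint identity from the path-slice property upgrades to the pointwise-along-$\gamma$ identity, i.e. that truncations of an admissible path are again admissible with the same set of admissible units, and that $\mathbb{S}_A(\gamma,\Omega)\subset \mathbb{S}_A(\gamma|_{[0,t]},\Omega)$; this is immediate from the definition of $\gamma^I$ and $\mathbb{S}_A(\gamma,\Omega)=\{I:\mathrm{Ran}(\gamma^I)\subset\Omega\}$, since a truncation has a smaller range. One should also note a notational caveat: the statement refers to $\mathcal{W}_A^1$ and $\mathscr{P}(\mathbb{C}^1,\Omega)$, the one-variable instances of the several-variables notation, so the proof is carried out verbatim in the one-variable case with $\mathbb{C}^1=\mathbb{C}$. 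Everything else (real differentiability, the splitting lemma) is already subsumed in the cited path-slice theorem, so no further analytic input is needed.
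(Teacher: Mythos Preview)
Your proof is correct and follows precisely the route the paper sets up: the theorem immediately preceding the proposition (slice regular $\Rightarrow$ path-slice) supplies the stem datum $F(\gamma)$, the computation leading to \eqref{L-1} supplies the invertibility of $\bigl(\begin{smallmatrix}1 & L_{J_1}\\ 1 & L_{J_2}\end{smallmatrix}\bigr)$, and the proposition is then the linear-algebra step you carry out. The paper itself states Proposition~\ref{pr-cp} without proof (it is a survey, referring to \cite{Dou2020004}), but your argument is exactly the one the surrounding material is designed to enable; your explicit truncation argument to pass from the endpoint identity $f\circ\gamma^I(1)=(1,L_I)F(\gamma)$ to the pointwise identity on $[0,1]$ is the right way to close the only nontrivial gap, and is implicit in the equivalence (i)$\Leftrightarrow$(iv) of Proposition~\ref{pr-ps} in the quaternionic case.
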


\subsection{Slice monogenic functions}
We now consider the special case in which $A=\mathbb{R}_n$, the real Clifford algebra over $n$ imaginary units $\mathbf{e}_1,...,\mathbf{e}_n$ satisfying $\mathbf{e}_\imath \mathbf{e}_\jmath+\mathbf{e}_\jmath \mathbf{e}_\imath=-2\delta_{\imath,\jmath}$, where
\begin{equation*}
	\delta_{\imath\jmath}={\begin{cases}0,&\imath\neq\jmath,\\-1,&\imath=\jmath.\end{cases}}
\end{equation*}
We recall that the slice regular functions over a real Clifford algebra are also called slice monogenic functions, and they were firstly introduced in \cite{Colombo2009002}.

\begin{exa}
In \cite{Colombo2009002}, the class of slice monogenic functions is defined on
\begin{equation*}
	\mathbf{R}^{n+1}:=\bigcup_{ I\in\mathbb{S}_{\mathbb{R}_n}^\circ}\mathbb{C}_I,
\end{equation*}
where
\begin{equation*}
	\mathbb{S}_{\mathbb{R}_n}^\circ:=\{x_1\mathbf{e}_1+\cdots x_n \mathbf{e}_n:x_1^2+\cdots+x_n^2=1\}.
\end{equation*}
The cone $\mathbf{R}^{n+1}$ is a $n+1$-dimensional real vector space, i.e.
\begin{equation*}
	\mathbf{R}^{n+1}=\mathbb{R}\langle1,\mathbf{e}_1,...,\mathbf{e}_n\rangle\cong\mathbb{R}^{n+1}
\end{equation*}
However, as observed in \cite{Ghiloni2011001}, slice monogenic functions can be defined on a larger set, namely the quadratic cone of $\mathbb{R}_n$:
\begin{equation*}
	Q_{\mathbb{R}_n}:=\bigcup_{I\in\mathbb{S}_{\mathbb{R}_n}}\mathbb{C}_I
\end{equation*}
where
\begin{equation*}
	\mathbb{S}_{\mathbb{R}_n}:=\{I\in\mathbb{R}_n:I^2=-1\}.
\end{equation*}
The slice monogenic functions in \cite{Ghiloni2011001} are defined on symmetric open sets in $Q_{\mathbb{R}_n}$. Since a Clifford algebra is a special case of an alternative algebra, we can consider the more general case of slice monogenic functions defined on a slice-open set in $Q_{\mathbb{R}_n}$ where the definition of slice monogenic is given by adapting Definition \ref{slregA} to the present case. Thus all the results in Section 4 are valid in this case.
\end{exa}

\section{Weak slice regular functions over slice-cones}

Some results in \cite{Dou2020004} are given in a greater generality:  in fact the weak slice regular functions can be considered in the case of functions which are $\mathbb{R}^{2n}$-valued and defined on open sets in the topological space $(\mathcal{W}_\mathcal{C}^d,\tau_s)$, where $\mathcal{W}_\mathcal{C}^d$ is a suitable weak slice cone in $[\End(\mathbb{R}^{2n})]^d$.  In \cite{Dou2020004} we proved various results for these functions, among which a representation formula, and we recall some of them in this section.

We denote by $\mathfrak{C}_n$ the set of complex structures on $\mathbb{R}^{2n}$, i.e.
\begin{equation*}
	\mathfrak{C}_n:=\left\{T\in\End\left(\mathbb{R}^{2n}\right)\ :\ T^2=-1\right\},
\end{equation*}
where the identity map $id_{\mathbb{R}^{2n}}$ on $\mathbb{R}^{2n}$ is denoted by $1$.

Let $\mathcal{C}\subset\mathfrak{C}_n$ {  with $\mathcal{C}=-\mathcal{C}$}. We call
\begin{equation*}
	\mathcal{W}_\mathcal{C}^d:=\bigcup_{I\in\mathcal{C}}\mathbb{C}_I^d
\end{equation*}
the $d$-dimensional weak slice-cone of $\mathcal{C}$, where
\begin{equation*}
	\mathbb{C}_I^d:=\left(\mathbb{R}+\mathbb{R}I\right)^d.
\end{equation*}

The slice topology on $\mathcal{W}_{\mathcal{C}}^d$ is defined by
\begin{equation*}
	\tau_s\left(\mathcal{W}_{\mathcal{C}}^d\right):=\left\{\Omega\subset \mathcal{W}_{\mathcal{C}}^d\ :\ \Omega_I\in \tau(\mathbb{C}_I^d),\ \forall\ I\in\mathcal{C}\right\},
\end{equation*}
where
\begin{equation*}
	\Omega_I:=\Omega\cap\mathbb{C}_I^d.
\end{equation*}

{\bf Convention}: Let $\Omega\subset\mathcal{W}_{\mathcal{C}}^d$. Denote by $\tau_s(\Omega)$ the subspace topology induced by $\tau_s\left(\mathcal{W}_{\mathcal{C}}^d\right)$. Open sets, domains, connected sets and paths in $\tau_s(\Omega)$ are called slice-open sets, slice-domains, slice-connected sets and slice-paths in $\Omega$, respectively. We write $x+yI\in\Omega$ short for $x+yI\in\Omega$ with { $x,y\in\mathbb{R}^d$} and $I\in\mathcal{C}$.

\begin{defn}
	Let $\Omega\in\tau_s(\mathcal{W}_\mathcal{C}^d)$. A function $f:\Omega\rightarrow\mathbb{R}^{2n}$ is called weak slice regular if and only if for each $I\in\mathcal{C}$, $f_I:=f|_{\Omega_I}$ is (left $I$-)holomorphic, i.e. $f_I$ is real differentiable and for each $\ell=1,2,...,d$,
	\begin{equation*}
		\frac{1}{2}\left(\frac{\partial}{\partial x_\ell}+I\frac{\partial}{\partial y_\ell}\right)f_I(x+yI)=0,\qquad\mbox{on}\qquad\Omega_I.
	\end{equation*}
\end{defn}

For any $I\in\mathcal{C}$, the set $\{\xi_1,...,\xi_n\}\subset\mathbb{R}^{2n}$ is called an $I$-basis of $\mathbb{R}^{2n}$ if
\begin{equation*}
	\{\xi_1,...,\xi_n,I(\xi_1),...,I(\xi_n)\}
\end{equation*}
is a basis of $\mathbb{R}^{2n}$ as a real vector space.

\begin{lem}\label{lm-wsp1}
	(Splitting Lemma) Let $\Omega\in\tau_s\left(\mathcal{W}_\mathcal{C}^d\right)$. A function $f:\Omega\rightarrow\mathbb{R}^{2n}$ is weak slice regular if and only if for any $I\in\mathcal{C}$ and $I$-basis $\{\xi_1,....,\xi_n\}$, there are $n$ holomorphic functions $F_1,...,F_n:\Omega_I\subset\mathbb{C}_I^d\rightarrow\mathbb{C}_I$, such that
	\begin{equation*}
		f_I=\sum_{\ell=1}^n(F_\ell\xi_\ell).
	\end{equation*}
\end{lem}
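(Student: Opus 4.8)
The plan is to reduce the statement, slice by slice, to the classical fact that a $\mathbb{C}$-valued (equivalently $\mathbb{C}_I$-valued) function on a domain of $\mathbb{C}^d$ is holomorphic precisely when it is real differentiable and annihilated by the Wirtinger operators $\tfrac12(\partial_{x_\ell} + i\,\partial_{y_\ell})$. The whole content is linear algebra over each fixed slice $\mathbb{C}_I^d$, so I would fix once and for all an arbitrary $I \in \mathcal{C}$ together with an $I$-basis $\{\xi_1,\dots,\xi_n\}$ of $\mathbb{R}^{2n}$; by definition of an $I$-basis, $\{\xi_1,\dots,\xi_n,I(\xi_1),\dots,I(\xi_n)\}$ is a real basis, so every $v \in \mathbb{R}^{2n}$ is uniquely $v = \sum_{\ell=1}^n (a_\ell \xi_\ell + b_\ell I(\xi_\ell))$ with $a_\ell,b_\ell \in \mathbb{R}$, i.e. $v = \sum_{\ell=1}^n (a_\ell + b_\ell I)\xi_\ell$ where the coefficient $a_\ell + b_\ell I$ lies in $\mathbb{C}_I = \mathbb{R} + \mathbb{R}I$. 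In other words, the map $(\mathbb{C}_I)^n \to \mathbb{R}^{2n}$, $(z_1,\dots,z_n)\mapsto \sum_\ell z_\ell \xi_\ell$, is an $\mathbb{R}$-linear isomorphism which intertwines coordinatewise multiplication by $I$ with the complex structure $I$ on $\mathbb{R}^{2n}$. This is the one structural observation that makes everything else routine.

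Given this identification, write $f_I = \sum_{\ell=1}^n (F_\ell \xi_\ell)$, where the $F_\ell : \Omega_I \to \mathbb{C}_I$ are the (uniquely determined) coordinate functions; since the $\xi_\ell$ form part of a real basis, $f_I$ is real differentiable on $\Omega_I$ if and only if each $F_\ell$ is, and in that case $\partial_{x_k} f_I = \sum_\ell (\partial_{x_k} F_\ell)\xi_\ell$ and likewise for $\partial_{y_k}$. Applying $\tfrac12(\partial_{x_k} + I\,\partial_{y_k})$ and using that $I(\xi_\ell)$ corresponds to multiplication by $I$ in the $\ell$-th $\mathbb{C}_I$-coordinate, one gets
\begin{equation*}
	\frac12\left(\frac{\partial}{\partial x_k} + I\frac{\partial}{\partial y_k}\right) f_I = \sum_{\ell=1}^n \left[\frac12\left(\frac{\partial}{\partial x_k} + I\frac{\partial}{\partial y_k}\right) F_\ell\right]\xi_\ell .
\end{equation*}
Because the $\xi_\ell$ are $\mathbb{R}$-linearly independent and each bracketed term is $\mathbb{C}_I$-valued, the left-hand side vanishes on $\Omega_I$ for every $k=1,\dots,d$ if and only if each $F_\ell$ satisfies the Cauchy--Riemann system $\tfrac12(\partial_{x_k} + I\,\partial_{y_k})F_\ell = 0$ for all $k$, which (transporting via the isomorphism $\mathcal{P}_I$, or just invoking the classical several-complex-variables Wirtinger criterion inside $\mathbb{C}_I^d \cong \mathbb{C}^d$) is exactly holomorphy of $F_\ell$ on $\Omega_I$. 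Thus $f_I$ is left $I$-holomorphic iff all $F_\ell$ are holomorphic.

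Finally I would assemble the two directions. If $f$ is weak slice regular, then for the fixed $I$ and $I$-basis the above shows the coordinate functions $F_1,\dots,F_n$ are holomorphic, giving the decomposition; conversely, if for \emph{some} $I$-basis at \emph{some} $I$ we have such holomorphic $F_\ell$, the computation shows $f_I$ is holomorphic on $\Omega_I$ — but to conclude weak slice regularity we need holomorphy of $f_J$ for \emph{every} $J \in \mathcal{C}$, so the "if" direction as literally stated must be read as: the decomposition is assumed available for every $I \in \mathcal{C}$ (which is the natural reading, paralleling the quaternionic Splitting Lemma). The only mild subtlety — and the step I would be most careful about — is this quantifier bookkeeping, namely making explicit that the decomposition is slicewise and must hold for all $I \in \mathcal{C}$, together with the observation that it is independent of the choice of $I$-basis (a different $I$-basis changes the $F_\ell$ by an invertible constant $\mathbb{C}_I$-linear matrix, which preserves holomorphy). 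Everything else is the linear-algebra identification of $\mathbb{R}^{2n}$ with $(\mathbb{C}_I)^n$ plus the classical Wirtinger characterization of holomorphic maps $\mathbb{C}^d \to \mathbb{C}$, both of which I may use freely.
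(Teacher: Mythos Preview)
Your argument is correct and is exactly the standard proof of the Splitting Lemma: identify $\mathbb{R}^{2n}$ with $(\mathbb{C}_I)^n$ via the $I$-basis, observe that this intertwines the action of $I$, and read off the equivalence between the vanishing of the Wirtinger operators on $f_I$ and on each $\mathbb{C}_I$-coordinate $F_\ell$. The paper itself does not supply a proof of Lemma~\ref{lm-wsp1}; it simply states it (the results in that section are imported from \cite{Dou2020004}), so there is nothing substantive to compare against, but your write-up is precisely the argument one expects and matches in spirit the quaternionic Splitting Lemma the paper alludes to in Section~\ref{sc-mr}. Your remarks on the quantifiers and on independence of the choice of $I$-basis are appropriate and complete the proof.
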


\begin{thm}\label{tm-ip}
	(Identity Principle) Let $\Omega$ be a slice-domain in $\mathcal{W}_\mathcal{C}^d$ and $f,g:\Omega\rightarrow\mathbb{R}^{2n}$ be weak slice regular. Then the following statements holds.
	\begin{enumerate}[\upshape (i)]
		\item\label{it-ip1} If $f=g$ on a non-empty open subset $U$ of $\Omega_{\mathbb{R}}$, then $f=g$ on $\Omega$.
		\item\label{it-ip2} If $f=g$ on a non-empty open subset $U$ of $\Omega_I$ for some $I\in\mathcal{C}$, then $f=g$ on $\Omega$.
	\end{enumerate}
\end{thm}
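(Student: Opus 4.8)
The plan is to reduce the general statement to the classical one-variable, one-slice identity principle for holomorphic functions, using the Splitting Lemma, and then to "propagate" the coincidence set through the whole slice-domain by an open–closed argument in the slice topology. First I would observe that (i) is a special case of (ii) once we know that coincidence on an open subset of $\Omega_{\mathbb R}$ forces coincidence on $\Omega_I$ for every $I\in\mathcal C$: indeed, if $U\subset\Omega_{\mathbb R}$ is open and nonempty, then $U$ is an open subset of $\mathbb R^d$ sitting inside $\mathbb C_I^d$ and accumulating there, so we can invoke (ii) relative to any fixed $I$. Hence it suffices to prove (ii). Replacing $f$ by $f-g$ (note $f-g$ is still weak slice regular, since the defining Cauchy–Riemann operators are $\mathbb R$-linear), we must show: if a weak slice regular $h:\Omega\to\mathbb R^{2n}$ vanishes on a nonempty open subset of some $\Omega_{I_0}$, then $h\equiv 0$ on $\Omega$.

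The core local step is the single-slice assertion: for a fixed $I\in\mathcal C$, if $h_I=h|_{\Omega_I}$ vanishes on a nonempty open subset of $\Omega_I$, and $\Omega_I$ is connected in $\mathbb C_I^d$, then $h_I\equiv 0$ on $\Omega_I$. This follows from Lemma \ref{lm-wsp1}: choosing an $I$-basis $\{\xi_1,\dots,\xi_n\}$ we write $h_I=\sum_{\ell=1}^n F_\ell\xi_\ell$ with each $F_\ell:\Omega_I\to\mathbb C_I$ holomorphic in $d$ complex variables; since $\{\xi_\ell, I\xi_\ell\}$ is a real basis, $h_I=0$ on an open set forces each $F_\ell=0$ there, and the classical identity principle for holomorphic functions of several complex variables on a connected domain gives $F_\ell\equiv 0$, hence $h_I\equiv 0$. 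To handle domains $\Omega_I$ that need not be connected, and to move between different slices, I would next run a connectedness argument on $\Omega$ itself: let $Z:=\{q\in\Omega : h \text{ vanishes on a slice-neighborhood of } q\}$. This set is slice-open by construction and nonempty by hypothesis (the open subset of $\Omega_{I_0}$ on which $h$ vanishes is, by the remark just above applied within a small real-connected st-domain from Proposition \ref{pr-sor}, a slice-neighborhood of each of its points). The key is to show $Z$ is also slice-closed in $\Omega$.

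To prove $Z$ is slice-closed, take $q\in\Omega$ in the slice-closure of $Z$; by Proposition \ref{pr-sor} choose a real-connected st-domain $V\subset\Omega$ containing $q$, and note $V\cap Z\neq\emptyset$. If $V_{\mathbb R}\neq\emptyset$: pick $p\in V\cap Z$; by Proposition \ref{pr-rcsd}(iv) there are two paths on a slice $\gamma_1,\gamma_2$ in $V$ with $\gamma_1\gamma_2$ a slice-path from $p$ to $q$, where the join point $\gamma_1(1)=\gamma_2(0)$ can be taken in $V_{\mathbb R}$. The slice containing $\gamma_1$ meets $Z$ (near $p$), so by the single-slice step and connectedness of $V_I$ (Proposition \ref{pr-rcsd}(iii)) $h$ vanishes on that whole slice of $V$, in particular at the real join point $x_0\in V_{\mathbb R}$ together with a neighborhood within that slice; but $h$ real analytic on a real-connected slice through $x_0$ forces, again via the Splitting Lemma and real analyticity of holomorphic functions, the vanishing of $h$ on an $\mathbb R^{2n}$-neighborhood of $x_0$ inside $\mathbb C_I^d$ for \emph{every} $I$ (the power series at $x_0$ has real coefficients by the representation through $\{F_\ell\}$, so it is slice-independent); propagating once more along $\gamma_2$ in its slice of $V$ yields $h\equiv 0$ near $q$, i.e. $q\in Z$. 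If $V_{\mathbb R}=\emptyset$, then $V\subset\mathbb C_I^d$ for a single $I$ by Proposition \ref{pr-rcsd}(i), $V$ is connected, and the single-slice step alone gives $q\in Z$. Thus $Z$ is nonempty, slice-open and slice-closed in the slice-domain $\Omega$, hence $Z=\Omega$, proving $h\equiv 0$.

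The main obstacle I anticipate is the passage across the real axis: showing that vanishing of $h$ on one slice near a real point $x_0$ forces vanishing on a full Euclidean neighborhood of $x_0$ in every slice $\mathbb C_I^d$. This needs the fact that the Taylor coefficients of a weak slice regular function at a real point are "real" (independent of the slice), which one extracts from the Splitting Lemma — expanding $F_\ell$ in power series at $x_0\in\mathbb R^d$ with coefficients in $\mathbb C_I$, and observing that on the real subspace these must be real, so the same series represents $h$ on every slice. Once this real-point rigidity is in hand, the rest is a routine but slightly lengthy bookkeeping of the path-on-a-slice decompositions supplied by Proposition \ref{pr-rcsd}.
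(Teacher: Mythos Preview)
The paper does not actually prove Theorem~\ref{tm-ip}: it is stated in Section~5 without proof, the authors having referred the reader to \cite{Dou2020004} for the details of the slice-cone theory. So there is no in-paper argument to compare against directly; I can only assess your proposal on its own merits and against the architecture the paper sets up.

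Your overall strategy is sound and is in fact the standard one in this circle of papers: reduce to $h:=f-g$, use the Splitting Lemma~\ref{lm-wsp1} plus the classical identity principle in $\mathbb{C}_I^d$ to get vanishing on a full connected piece of a single slice, and then run an open--closed argument in the slice topology, crossing between slices only through the reals. Two remarks are worth making.

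First, the ``real-point rigidity'' step is the heart of the matter, and your justification via ``the power series at $x_0$ has real coefficients'' is slightly loose because the Splitting Lemma decomposition $h_I=\sum_\ell F_\ell\xi_\ell$ depends on the chosen $I$-basis. The clean way to phrase it, available in this very paper, is via \eqref{eq-pie}: for a holomorphic $h_I$ one has $\partial_{I,\ell}h_I=\partial_{x_\ell}h_I$, so at a real point $x_0$ the full Taylor jet of $h_I$ is determined by the $x$-derivatives $\partial_x^\alpha h(x_0)$, which are slice-independent. If $h$ vanishes on an open set of one slice containing $x_0$, all these derivatives vanish, hence the Taylor series of $h_J$ at $x_0$ is identically zero for \emph{every} $J\in\mathcal{C}$, and $h_J$ vanishes on a $\mathbb{C}_J^d$-neighborhood of $x_0$. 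This is exactly what you need and requires no appeal to ``realness'' of coefficients in a particular basis.

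Second, a small bookkeeping point: Propositions~\ref{pr-sor} and~\ref{pr-rcsd}, which you invoke, are stated in this paper only in the quaternionic setting (Section~2). The paper asserts in Section~4 that the analogous statements hold over $Q_A$ and refers to \cite{Dou2020004}; the same extension to $\mathcal{W}_\mathcal{C}^d$ is implicit but not restated. Your argument needs those structural facts about real-connected st-domains in $\mathcal{W}_\mathcal{C}^d$, so you should flag that you are using their generalized versions. With these two clarifications, your proof goes through.
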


For any $I\in\mathcal{C}$, let us choose a fixed $I$-basis of $\mathbb{R}^{2n}$ which is denoted by
\begin{equation}\label{eq-ti}
	\theta^I:=\left\{\theta^I_1,...,\theta^I_n\right\},
\end{equation}
and let us consider the ${2^n\times 2^n}$ real matrix $D_I$ given by
\begin{equation}\label{eq-dib} D_I:=\begin{pmatrix}\theta_1^I&\cdots&\theta_n^I&I\theta_1^I&\cdots&I\theta_n^I\end{pmatrix}.
\end{equation}

By \cite[Proposition 2.3]{Dou2020004}, we can define the Moore-Penrose inverse of $J\in\End\left(\mathbb{R}^{2n}\right)^{k\times\ell}$.

\begin{defn}
	For each $J\in\End\left(\mathbb{R}^{2n}\right)^{k\times\ell}$, define by $J^+$ the unique matrix in $\End\left(\mathbb{R}^{2n}\right)^{\ell\times k}$ that satisfies the Moore-Penrose conditions:
	\begin{enumerate}[\upshape (i)]
		\item\label{it-jmp1} $JJ^+J=J$.
		\item $J^+JJ^+=J^+$.
		\item $(JJ^+)^*=JJ^+$.
		\item $(J^+J)^*=J^+J$.

	\end{enumerate}
	Here $J^*$ is the adjoint matrix of $J$, i.e. the unique matrix such that
	\begin{equation*}
		\langle x,Jy\rangle=\langle J^* x,y\rangle,\qquad\forall\ x\in\left(\mathbb{R}^{2n}\right)^{k},\ y\in\left(\mathbb{R}^{2n}\right)^{\ell}.
	\end{equation*}
	We call $J^+$ the Moore-Penrose inverse of $J$.
\end{defn}

Let $J=(J_1,...,J_k)^T\in\mathcal{C}^k$. We denote
\begin{equation*}
	D_J:=\begin{pmatrix}
		D_{J_1}\\&\ddots\\&&D_{J_k}
	\end{pmatrix}\qquad\mbox{and}\qquad\zeta(J):=\begin{pmatrix}
	1&J_1\\\vdots &\vdots\\1&J_k
\end{pmatrix},
\end{equation*}
where $D_{J_\ell}$ is defined by \eqref{eq-dib}.

We set
\begin{equation*}
	\zeta^+(J):=[\Im^{-1}_{D_J}\cdot\zeta(J)]^+ \Im^{-1}_{D_J}
\end{equation*}
and we call it the $J$-slice inverse of $\zeta(J)$, where $\Im^{-1}_{D_J}:=\Im^{-1}(D_J)$ is the unique $\End\left(\mathbb{R}^{2n}\right)$-valued $k\times k$ matrix such that
\begin{equation*}
	\Im^{-1}_{D_J}(a)=D_J a,\qquad\forall\ a\in\left(\mathbb{R}^{2n}\right)^{k}=\mathbb{R}^{2nk}.
\end{equation*}

Let $\gamma:[0,1]\rightarrow\mathbb{C}^d$ and $I\in\mathcal{C}$. We define the image path of $\gamma$ in $\mathbb{C}_I^d$ by
\begin{equation*}
	\gamma^{I}:=\Psi_i^{I}\circ\gamma,
\end{equation*}
where $\Psi_i^I:\mathbb{C}^d\rightarrow\mathbb{C}_I^d$, $x+yi\mapsto x+yI$
is an isomorphism.
As we already did in the preceding sections, we define:
\begin{equation*}
	\mathscr{P}(\mathbb{C}^d):=\{\gamma:[0,1]\rightarrow\mathbb{C}^d,\ \gamma\ \mbox{is a path s.t. }\gamma(0)\in\mathbb{R}^d\};
\end{equation*}
for any $\Omega\subset\mathcal{W}_\mathcal{C}^d$ and for an arbitrary, but fixed $\gamma\in\mathscr{P}\left(\mathbb{C}^d\right)$ we define
\begin{equation*} \mathscr{P}\left(\mathbb{C}^d,\Omega\right):=\left\{\delta\in\mathscr{P}\left(\mathbb{C}^d\right):\exists\ I\in\mathcal{C},\mbox{ s.t. } Ran(\delta^{I})\subset\Omega\right\};
\end{equation*}
\begin{equation*}
	{\mathcal{C}(\Omega,\gamma)}:=\left\{I\in\mathcal{C}: Ran(\gamma^{I})\subset\Omega\right\},
\end{equation*}
where $Ran(\cdot)$ denotes the range.

Let $J=(J_1,...,J_k)^T\in\mathcal{C}^k$, $\Omega\subset\mathcal{W}_\mathcal{C}^d$ and $\gamma\in\mathscr{P}(\mathbb{C}^d,\Omega)$. We define
\begin{equation}\label{eq-ck} \mathcal{C}_{ker}(J):=\left\{I\in\mathcal{C}:\ker(1,I)\supset\ker[\zeta(J)]=\bigcap_{\ell=1}^k\ker(1,J_\ell)\right\},
\end{equation}
and
\begin{equation*}
	\mathcal{C}(\Omega,\gamma,J):=\mathcal{C}(\Omega,\gamma)\cap\mathcal{C}_{ker}(J),
\end{equation*}
where $\ker(\cdot)$ is the kernel of a map, and $\ker(1,J_\ell)$ stands for $\ker((1,J_\ell))$.

The results below were originally proved in \cite{Dou2020004}, Section 5:
\begin{lem}\label{lm-los}
	Let $\Omega\subset\mathcal{W}_\mathcal{C}^d$, $\gamma\in\mathscr{P}\left(\mathbb{C}^d,\Omega\right)$ and $J=(J_1,...,J_k)^T\in\left[\mathcal{C}(\Omega,\gamma)\right]^k$. Then there is a domain $U$ in $\mathbb{C}^d$ containing $\gamma([0,1])$ such that
	\begin{equation*}
		\Psi_i^{J_\ell}(U)\subset\Omega,\qquad\ell=1,...,k.
	\end{equation*}
\end{lem}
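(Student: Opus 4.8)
The plan is to build the domain $U$ slice-by-slice and then take an intersection-like construction, exploiting the fact that each $\gamma^{J_\ell}$ is a compact path lying inside the slice-open set $\Omega$. First I would fix $\ell\in\{1,\dots,k\}$ and note that, since $J_\ell\in\mathcal{C}(\Omega,\gamma)$, the image $Ran(\gamma^{J_\ell})=\Psi_i^{J_\ell}(\gamma([0,1]))$ is contained in $\Omega_{J_\ell}=\Omega\cap\mathbb{C}_{J_\ell}^d$, which by definition of $\tau_s(\mathcal{W}_\mathcal{C}^d)$ is open in $\mathbb{C}_{J_\ell}^d$. Because $\gamma([0,1])$ is compact and $\Psi_i^{J_\ell}:\mathbb{C}^d\to\mathbb{C}_{J_\ell}^d$ is a homeomorphism, $(\Psi_i^{J_\ell})^{-1}(\Omega_{J_\ell})$ is an open neighbourhood in $\mathbb{C}^d$ of the compact set $\gamma([0,1])$. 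A standard tube-lemma/compactness argument then yields an open set $V_\ell\subset\mathbb{C}^d$ with $\gamma([0,1])\subset V_\ell$ and $\Psi_i^{J_\ell}(V_\ell)\subset\Omega$; for instance one can cover $\gamma([0,1])$ by finitely many polydiscs contained in $(\Psi_i^{J_\ell})^{-1}(\Omega_{J_\ell})$ and take their union, or simply take $V_\ell:=(\Psi_i^{J_\ell})^{-1}(\Omega_{J_\ell})$ directly, which is already open.

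Next I would set
\begin{equation*}
	V:=\bigcap_{\ell=1}^k V_\ell,
\end{equation*}
which is open in $\mathbb{C}^d$ and still contains the compact connected set $\gamma([0,1])$, and satisfies $\Psi_i^{J_\ell}(V)\subset\Psi_i^{J_\ell}(V_\ell)\subset\Omega$ for every $\ell=1,\dots,k$. The only remaining point is that the statement asks for a \emph{domain} $U$, i.e. a connected open set, rather than merely an open set. To fix this, I would let $U$ be the connected component of $V$ that contains $\gamma([0,1])$; this is well defined because $\gamma$ is a path, so $\gamma([0,1])$ is connected and hence lies entirely in a single component of $V$. Since $\mathbb{C}^d$ is locally connected, connected components of the open set $V$ are open, so $U$ is an open connected subset of $\mathbb{C}^d$, that is, a domain, with $\gamma([0,1])\subset U\subset V$ and therefore $\Psi_i^{J_\ell}(U)\subset\Omega$ for all $\ell$, as required.

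I do not expect any serious obstacle here; the argument is essentially point-set topology. The one spot that deserves a line of care is the passage from the open neighbourhood to a genuine domain: one must invoke local connectedness of $\mathbb{C}^d$ to know that the component $U$ of $V$ is open, and one must use that $\gamma([0,1])$ is connected (so that it is not split among several components of $V$). Everything else — that $\Omega_{J_\ell}$ is open in $\mathbb{C}_{J_\ell}^d$, that $\Psi_i^{J_\ell}$ is a homeomorphism, that finite intersections of open sets are open — is immediate from the definitions recalled earlier in this section and in \cite{Dou2020004}.
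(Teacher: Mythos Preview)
The paper does not actually prove this lemma in the text; it merely states it as one of ``the results below [that] were originally proved in \cite{Dou2020004}, Section 5'', so there is no in-paper argument to compare against. Your proof is the natural point-set topology argument and is correct: pull $\Omega_{J_\ell}$ back through the homeomorphism $\Psi_i^{J_\ell}$, intersect over $\ell=1,\dots,k$, and pass to the connected component containing the connected compact set $\gamma([0,1])$, using local connectedness of $\mathbb{C}^d$ to ensure that component is open.

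One caveat worth flagging explicitly: the lemma as printed here assumes only $\Omega\subset\mathcal{W}_\mathcal{C}^d$, not $\Omega\in\tau_s(\mathcal{W}_\mathcal{C}^d)$, yet your argument uses that $\Omega_{J_\ell}$ is open in $\mathbb{C}_{J_\ell}^d$. This is not a flaw in your reasoning but in the statement: the conclusion is plainly false for arbitrary subsets (e.g.\ take $\Omega=Ran(\gamma^{J_1})$), and every application of the lemma in this paper --- Theorem~\ref{th-prf}, Corollary~\ref{co-lo}, Proposition~\ref{pr-li} --- has $\Omega$ slice-open. So the hypothesis $\Omega\in\tau_s(\mathcal{W}_\mathcal{C}^d)$ is clearly intended, and with it your proof goes through without change.
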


\begin{lem}\label{lm-lut}(Extension Lemma)
		Let $U\in\tau(\mathbb{C}^d)$, $I\in\mathcal{C}$ and $J=(J_1,...,J_k)^T\in\mathcal{C}^k$. If $g_\ell:\Psi_i^{J_\ell}(U)\rightarrow\mathbb{R}^{2n}$, $\ell=1,...,k$ are holomorphic, then the function $g[I]:\Psi_i^I(U)\rightarrow\mathbb{R}^{2n}$ defined by
		\begin{equation*}
			g[I](x+yI)=(1,I)
			\zeta^+(J)
			g(x+yJ),\qquad\forall\ x+yi\in U,
		\end{equation*}
		where
		\begin{equation*}
			g(x+yJ)=\begin{pmatrix}
				g_1(x+yJ_1)\\\vdots\\g_k(x+yJ_k)
			\end{pmatrix}
		\end{equation*}
		is holomorphic.
		
		Moreover, if $U_\mathbb{R}:=U\cap\mathbb{R}^d\neq\varnothing$, $g_1=\cdots=g_k$ on $U_\mathbb{R}$ and $I\in\mathcal{C}_{ker}(J)$, then
		\begin{equation}\label{eq-gg}
			g[I]=g_1=\cdots=g_k\qquad\mbox{on}\qquad U_\mathbb{R}.
		\end{equation}
\end{lem}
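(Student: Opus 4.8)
The plan is to reduce both statements to linear algebra over $\mathop{\mathrm{End}}(\mathbb{R}^{2n})$ together with the Cauchy--Riemann equations of the $g_\ell$. Identify $U$ with an open subset of $\mathbb{R}^d\times\mathbb{R}^d$ via $x+yi\leftrightarrow(x,y)$, and put $G_\ell(x,y):=g_\ell(x+yJ_\ell)$, $G:=(G_1,\dots,G_k)^T$, $\mathcal J:=\diag(J_1,\dots,J_k)\in\mathop{\mathrm{End}}(\mathbb{R}^{2n})^{k\times k}$, and $E:=\left(\begin{smallmatrix}0&-1\\1&0\end{smallmatrix}\right)\in\mathop{\mathrm{End}}(\mathbb{R}^{2n})^{2\times2}$ (identity of $\mathbb{R}^{2n}$ in the nonzero slots). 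Since $g_\ell$ is $J_\ell$-holomorphic and $J_\ell^{-1}=-J_\ell$, one gets $\partial_{y_m}G_\ell=J_\ell\,\partial_{x_m}G_\ell$, hence $\partial_{y_m}G=\mathcal J\,\partial_{x_m}G$ for $m=1,\dots,d$. As $\zeta^+(J)$ is a constant matrix, $g[I]$ is real differentiable, and writing $\zeta^+(J)=\binom{M_1}{M_2}$ for its two rows a direct computation using $I^2=-\mathrm{id}$ gives
\[
\left(\tfrac{\partial}{\partial x_m}+I\tfrac{\partial}{\partial y_m}\right)g[I]
=\bigl[(M_1-M_2\mathcal J)+I\,(M_2+M_1\mathcal J)\bigr]\,\partial_{x_m}G
\qquad(m=1,\dots,d).
\]
So $g[I]$ is $I$-holomorphic provided $M_1=M_2\mathcal J$; indeed this also forces $M_2=-M_1\mathcal J$ (because $\mathcal J^2=-\mathrm{id}$), so the whole bracket vanishes. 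Equivalently, holomorphy of $g[I]$ reduces to the identity $\zeta^+(J)\,\mathcal J=E\,\zeta^+(J)$.

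To prove $\zeta^+(J)\,\mathcal J=E\,\zeta^+(J)$ I would begin from the elementary intertwining $\mathcal J\,\zeta(J)=\zeta(J)\,E$, which holds because $J_\ell^2=-\mathrm{id}$ makes both sides equal to $\left(\begin{smallmatrix}J_1&-1\\ \vdots&\vdots\\ J_k&-1\end{smallmatrix}\right)$, and then track it through the definition $\zeta^+(J)=[\Im^{-1}_{D_J}\zeta(J)]^+\Im^{-1}_{D_J}$: with $N:=\Im^{-1}_{D_J}\zeta(J)$ and $P:=\Im^{-1}_{D_J}\mathcal J\,\Im^{-1}_{D_J^{-1}}$ one has $PN=NE$ (here $P^2=-\mathrm{id}$ and $E$ is orthogonal with $E^{-1}=-E$), and the four Moore--Penrose conditions for $N^+$ — available in the $\mathop{\mathrm{End}}(\mathbb{R}^{2n})$-valued setting by \cite[Proposition 2.3]{Dou2020004} — yield $N^+P=E\,N^+$; multiplying on the right by $\Im^{-1}_{D_J}$ then gives $\zeta^+(J)\mathcal J=N^+\Im^{-1}_{D_J}\mathcal J=N^+P\,\Im^{-1}_{D_J}=E\,N^+\Im^{-1}_{D_J}=E\,\zeta^+(J)$. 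When $\zeta(J)$ is invertible (for instance $k=2$ with $J_1-J_2$ invertible) this collapses to the inverse relation $\zeta(J)^{-1}\mathcal J=\zeta(J)^{-1}\mathcal J\zeta(J)\zeta(J)^{-1}=\zeta(J)^{-1}\zeta(J)E\zeta(J)^{-1}=E\zeta(J)^{-1}$, with $\zeta^+(J)=\zeta(J)^{-1}$. I expect the honest verification of $N^+P=E\,N^+$ in the rank-deficient case — which is precisely where the twist by $\Im^{-1}_{D_J}$ in the definition of $\zeta^+(J)$ is needed (it cancels whenever $N$ has full rank) — to be the main obstacle; the tools are the orthogonality of $E$ and the self-adjointness and idempotency of the projections $N^+N$ and $NN^+$.

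For the second assertion, fix $x\in U_{\mathbb R}=U\cap\mathbb R^d$. Then $x+yJ_\ell=x$ for every $\ell$, and since $g_1=\dots=g_k$ on $U_{\mathbb R}$ we have $g(x+yJ)=\mathbf 1\,g_1(x)$ with $\mathbf 1:=(1,\dots,1)^T=\zeta(J)\binom{1}{0}$; hence $g[I](x)=(1,I)\,\zeta^+(J)\zeta(J)\binom{g_1(x)}{0}$. Now $\zeta^+(J)\zeta(J)=N^+\Im^{-1}_{D_J}\zeta(J)=N^+N$, which by the Moore--Penrose conditions is the orthogonal projection onto $(\ker N)^\perp=(\ker\zeta(J))^\perp$ (the kernels coincide because $\Im^{-1}_{D_J}$ is invertible). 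Writing $\binom{g_1(x)}{0}=v+w$ with $v\perp\ker\zeta(J)$ and $w\in\ker\zeta(J)$, we get $\zeta^+(J)\zeta(J)\binom{g_1(x)}{0}=v$; the hypothesis $I\in\mathcal C_{ker}(J)$ says exactly that $\ker(1,I)\supset\ker\zeta(J)$, so $(1,I)w=0$ and therefore $g[I](x)=(1,I)v=(1,I)(v+w)=(1,I)\binom{g_1(x)}{0}=g_1(x)$. Since $x$ was an arbitrary point of $U_{\mathbb R}$ and $g_1=\dots=g_k$ there, this gives \eqref{eq-gg}. This part is routine once the projection description of $N^+N$ is in hand.
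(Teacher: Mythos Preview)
The paper does not actually prove this lemma: immediately before the statement it says ``The results below were originally proved in \cite{Dou2020004}, Section 5'', and then states Lemma~\ref{lm-los} and Lemma~\ref{lm-lut} without proof. So there is no in-paper argument to compare against; any comparison would have to be with the cited reference.

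On the merits, your strategy is sound. The reduction of $I$-holomorphy of $g[I]$ to the operator identity
\[
(\partial_{x_m}+I\partial_{y_m})g[I]=\bigl[(M_1-M_2\mathcal J)+I(M_2+M_1\mathcal J)\bigr]\partial_{x_m}G
\]
is correct, and the sufficient condition you isolate, $\zeta^+(J)\mathcal J=E\,\zeta^+(J)$, together with the forward intertwining $\mathcal J\,\zeta(J)=\zeta(J)\,E$, is exactly the right algebraic core. Your treatment of the second assertion via $\zeta^+(J)\zeta(J)=N^+N$ being the orthogonal projection onto $(\ker\zeta(J))^\perp$ (Proposition~\ref{pr-jjk}) and the hypothesis $I\in\mathcal C_{ker}(J)$ is clean and complete.

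The genuine gap is the one you flag yourself: deducing $N^+P=EN^+$ from $PN=NE$. The tools you list --- orthogonality of $E$ and the projection properties of $N^+N$, $NN^+$ --- are not quite enough as stated. From $PN=NE$ you get that $P$ preserves $\mathrm{range}(N)$ and (since $E$ is orthogonal and $E$ preserves $\ker N=\ker\zeta(J)$) that $E$ preserves $(\ker N)^\perp$. This already yields $N(N^+P-EN^+)=[NN^+,P]$, and together with $\mathrm{range}(N^+)\subset(\ker N)^\perp$ it finishes the argument whenever $NN^+=\mathrm{id}$ (the surjective case). In the non-surjective case you would additionally need $P$ to commute with the projection $NN^+$, i.e.\ to preserve $(\mathrm{range}(N))^\perp$; this does \emph{not} follow from $PN=NE$ alone, and $P=\Im^{-1}_{D_J}\mathcal J\,(\Im^{-1}_{D_J})^{-1}$ is in general not orthogonal (for instance with $n=1$, $k=1$ and a non-orthogonal $J_1$, a direct computation gives $P^\ast P\neq\mathrm{id}$, yet the desired identity still holds). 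So to close the argument you must exploit the specific choice of the twist $\Im^{-1}_{D_J}$ --- precisely the mechanism discussed in \cite[Proposition~2.3]{Dou2020004} --- rather than generic Moore--Penrose facts. Once that is done, your outline becomes a complete proof.
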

We now present the so-called path-representation formula, see Theorem 6.1 in \cite{Dou2020004}, which is a crucial result in this function theory.
\begin{thm}\label{th-prf}
	(Path-representation Formula) Let $\Omega$ be a slice-open set in $\mathcal{W}_\mathcal{C}^d$, $\gamma\in\mathscr{P}(\mathbb{C}^d,\Omega)$, $J=(J_1,J_2,...,J_k)^T\in\left[\mathcal{C}(\Omega,\gamma)\right]^k$ and $I\in\mathcal{C}(\Omega,\gamma,J)$.
	If $f:\Omega\rightarrow\mathbb{R}^{2n}$ is weak slice regular, then
	\begin{equation*}
		f\circ\gamma^I=(1,I)\zeta^+(J)(f\circ\gamma^J),
	\end{equation*}
	where
	\begin{equation}\label{eq-fgj}
		f\circ\gamma^J:=\begin{pmatrix}
			f\circ\gamma^{J_1}\\\vdots\\f\circ\gamma^{J_k}
		\end{pmatrix}.
	\end{equation}
\end{thm}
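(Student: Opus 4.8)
The plan is to realize the formula as an instance of the Extension Lemma (Lemma~\ref{lm-lut}) read off along $\gamma$; the main preliminary step is to enclose $\gamma$ in a single complex parameter-domain on which the slices $f|_{\mathbb{C}_{J_\ell}^d}$ and $f|_{\mathbb{C}_I^d}$ are all available at once. Since $J=(J_1,\dots,J_k)^T\in[\mathcal{C}(\Omega,\gamma)]^k$ and $I\in\mathcal{C}(\Omega,\gamma,J)\subset\mathcal{C}(\Omega,\gamma)$, I would first apply Lemma~\ref{lm-los} to the $(k+1)$-tuple $(J_1,\dots,J_k,I)$, obtaining a domain $U$ in $\mathbb{C}^d$ with $\gamma([0,1])\subset U$, $\Psi_i^{J_\ell}(U)\subset\Omega$ for $\ell=1,\dots,k$, and $\Psi_i^I(U)\subset\Omega$. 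Then $\Psi_i^{J_\ell}(U)\subset\Omega_{J_\ell}$ and $\Psi_i^I(U)\subset\Omega_I$, so weak slice regularity of $f$ makes the restrictions $g_\ell:=f|_{\Psi_i^{J_\ell}(U)}$ ($\ell=1,\dots,k$) and $g_I:=f|_{\Psi_i^I(U)}$ holomorphic, and, since on real points all of them agree with $f$, one has $g_1=\dots=g_k=g_I$ on the nonempty set $U_\mathbb{R}=U\cap\mathbb{R}^d$ (which contains $\gamma(0)$).

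Next I would feed $U$, $I$, $J$ and $g_1,\dots,g_k$ into Lemma~\ref{lm-lut} to produce the holomorphic function $g[I]$ on $\Psi_i^I(U)$ with $g[I](x+yI)=(1,I)\zeta^+(J)g(x+yJ)$, where $g(x+yJ)=(g_1(x+yJ_1),\dots,g_k(x+yJ_k))^T$. The hypotheses of the second part of Lemma~\ref{lm-lut} are satisfied: $U_\mathbb{R}\neq\emptyset$, $g_1=\dots=g_k$ on $U_\mathbb{R}$, and $I\in\mathcal{C}_{ker}(J)$ because $I\in\mathcal{C}(\Omega,\gamma,J)=\mathcal{C}(\Omega,\gamma)\cap\mathcal{C}_{ker}(J)$; hence $g[I]=g_1=\dots=g_k=g_I$ on $U_\mathbb{R}$. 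Now $g[I]$ and $g_I$ are holomorphic on the connected set $\Psi_i^I(U)\subset\mathbb{C}_I^d$ and coincide on $U_\mathbb{R}$, a nonempty subset of $\mathbb{R}^d$ which is open in $\mathbb{R}^d$; transporting this through the isomorphism $\Psi_i^I$ and the complex structure $I$ on $\mathbb{R}^{2n}$, it reduces to the classical fact that a holomorphic map on a domain in $\mathbb{C}^d$ that vanishes on a nonempty open subset of the totally real subspace $\mathbb{R}^d$ vanishes identically. Therefore $g[I]=g_I$ on $\Psi_i^I(U)$.

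It then remains to evaluate this equality along $\gamma^I=\Psi_i^I\circ\gamma$: writing $\gamma(t)=x(t)+y(t)i$ and using $g_\ell(x(t)+y(t)J_\ell)=f\circ\gamma^{J_\ell}(t)$, one gets for every $t\in[0,1]$ that $f\circ\gamma^I(t)=g_I(\gamma^I(t))=g[I](\gamma^I(t))=(1,I)\zeta^+(J)(f\circ\gamma^J)(t)$, which is exactly the asserted identity, with $f\circ\gamma^J$ the column vector in \eqref{eq-fgj}. The step I expect to be the genuine obstacle is the identity-principle argument above: because $\Psi_i^I(U)$ need not be slice-open in $\mathcal{W}_\mathcal{C}^d$, one cannot simply invoke the Identity Principle (Theorem~\ref{tm-ip}), and one must argue instead that coincidence on a full-dimensional real neighborhood---rather than on an open subset of the slice $\mathbb{C}_I^d$---already propagates across $\Psi_i^I(U)$; everything else is bookkeeping with the definitions of $\mathcal{C}(\Omega,\gamma,J)$, $\zeta^+(J)$, and the maps $\Psi_i^{J_\ell}$, $\Psi_i^I$.
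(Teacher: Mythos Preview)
Your proposal is correct and follows exactly the route the paper sets up: the paper does not reprove the theorem here but cites \cite{Dou2020004}, and it places Lemmas~\ref{lm-los} and~\ref{lm-lut} immediately before the statement precisely as the two ingredients you use. Your worry about the identity-principle step is unfounded as an ``obstacle'': once you invoke the Splitting Lemma~\ref{lm-wsp1} on the slice $\mathbb{C}_I^d$, the difference $g[I]-g_I$ becomes a tuple of ordinary $\mathbb{C}_I$-valued holomorphic functions on the domain $\Psi_i^I(U)\cong U\subset\mathbb{C}^d$ vanishing on the nonempty open real set $U_{\mathbb{R}}$, and the classical several-variables identity theorem (all partial derivatives vanish on $U_{\mathbb{R}}$, hence the Taylor series at a real point is zero) gives $g[I]=g_I$ on all of $\Psi_i^I(U)$; this is the same mechanism used later in the paper, e.g.\ in the proof of Proposition~\ref{pr-li}.
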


\begin{defn}
	Let $\mathcal{C}'\subset\mathcal{C}$ and $J\in(\mathcal{C}')^k$. We say that $J$ is a slice-solution of $\mathcal{C}'$ if
	\begin{equation*}
		\mathcal{C}'\subseteq\mathcal{C}_{ker} (J).
	\end{equation*}
\end{defn}

\begin{exa}
	Let $\Omega\subset\mathcal{W}_\mathcal{C}^d$, $\gamma\in\mathscr{P}(\mathbb{C}^d,\Omega)$ and $J\in \left[\mathcal{C}(\Omega,\gamma)\right]^k$. $J$ is a slice-solution of $\mathcal{C}(\Omega,\gamma)$ if and only if
	\begin{equation*}
		\mathcal{C}(\Omega,\gamma)=\mathcal{C}(\Omega,\gamma,J).
	\end{equation*}
\end{exa}

\begin{remark}\label{rm-lj}
	Note that for any { $I\in\mathcal{C}$,}
		\begin{equation*}
			\begin{pmatrix}
				1&I\\1&-I
			\end{pmatrix}^{-1}={\frac{1}{2}}\begin{pmatrix}
			1&1\\-I&I
		\end{pmatrix}
		\end{equation*}
	and
	\begin{equation*}
		ker\left(\zeta\left((I,-I)^T\right)\right)=ker\begin{pmatrix}
			1&I\\1&-I
		\end{pmatrix}=0.
	\end{equation*}
	It is easy to check that, by definition (see \eqref{eq-ck}), $J\in\mathcal{C}^k$ is a slice-solution of $\mathcal{C}$ if and only if
	\begin{equation*}
		\bigcap_{\ell=1}^k\ker(1,J_\ell)=\{0\}.
	\end{equation*}
	In another words, if $J$ is not a slice-solution, we can take a non-zero element
	\begin{equation*}
		a=\begin{pmatrix}
			a_1\\a_2
		\end{pmatrix}\in\bigcap_{\ell=1}^k\ker(1,J_\ell)\subset\left(\mathbb{R}^{2n}\right)^2.
	\end{equation*}
	Then for each $K\in\mathcal{C}_{ker}(J)$, we have $(1,K)a=0$.
\end{remark}

\begin{prop}
	Let $\mathcal{C}'\subset\mathcal{C}$. Then there is at least one slice-solution $J\in(\mathcal{C}')^k$ of $\mathcal{C}'$ for some $k\in\mathbb{N}_+$.
\end{prop}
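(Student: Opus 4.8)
The plan is to exploit the descending chain condition for subspaces of the finite-dimensional real vector space $\left(\mathbb{R}^{2n}\right)^2$. By Remark \ref{rm-lj} together with the definition \eqref{eq-ck} of $\mathcal{C}_{ker}$, for $J=(J_1,\dots,J_k)^T\in(\mathcal{C}')^k$ we have
\[
\mathcal{C}'\subseteq\mathcal{C}_{ker}(J)\quad\Longleftrightarrow\quad\bigcap_{\ell=1}^k\ker(1,J_\ell)\subseteq\ker(1,I)\ \text{ for every }I\in\mathcal{C}'.
\]
So the goal is to choose finitely many $J_1,\dots,J_k\in\mathcal{C}'$ so that the intersection $\bigcap_{\ell}\ker(1,J_\ell)$ is already so small that no further $I\in\mathcal{C}'$ can shrink it further.

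First I would note that for each $I\in\mathcal{C}$ the linear map $(1,I)\colon\left(\mathbb{R}^{2n}\right)^2\to\mathbb{R}^{2n}$, $(a_1,a_2)\mapsto a_1+I(a_2)$, is surjective (take $a_2=0$), hence $\ker(1,I)$ is a $2n$-dimensional subspace of the $4n$-dimensional space $\left(\mathbb{R}^{2n}\right)^2$. Now pick any $J_1\in\mathcal{C}'$ (here one uses $\mathcal{C}'\neq\emptyset$) and put $V_1:=\ker(1,J_1)$. Proceeding inductively, suppose $J_1,\dots,J_m\in\mathcal{C}'$ have been chosen and set $V_m:=\bigcap_{\ell=1}^m\ker(1,J_\ell)$. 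If $V_m\subseteq\ker(1,I)$ for every $I\in\mathcal{C}'$, I stop; otherwise there is $J_{m+1}\in\mathcal{C}'$ with $V_m\not\subseteq\ker(1,J_{m+1})$, so that $V_{m+1}:=V_m\cap\ker(1,J_{m+1})$ is a \emph{proper} subspace of $V_m$ and thus $\dim V_{m+1}<\dim V_m$. Since $\dim V_1=2n$ and the dimensions strictly decrease at each step, the procedure terminates after at most $k\le 2n+1$ steps.

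At termination I obtain $J:=(J_1,\dots,J_k)^T\in(\mathcal{C}')^k$ with $\ker[\zeta(J)]=\bigcap_{\ell=1}^k\ker(1,J_\ell)\subseteq\ker(1,I)$ for all $I\in\mathcal{C}'$, which by \eqref{eq-ck} is exactly the assertion $\mathcal{C}'\subseteq\mathcal{C}_{ker}(J)$, i.e. $J$ is a slice-solution of $\mathcal{C}'$. The only point requiring care is the termination of the selection process, but that is immediate from the finite-dimensionality of $\left(\mathbb{R}^{2n}\right)^2$, so I do not anticipate any genuine obstacle.
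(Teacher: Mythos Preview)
Your argument is correct: the descending-chain argument on subspaces of the finite-dimensional space $(\mathbb{R}^{2n})^2$ is exactly the right idea, and the termination step is clean. The paper itself does not give a self-contained proof here---it simply refers to \cite[Proposition~6.4]{Dou2020004}---but the reasoning there is precisely this finite-dimensionality argument, so your approach matches the intended one. One small caveat: as you note, you need $\mathcal{C}'\neq\emptyset$ to start the induction; the proposition tacitly assumes this (otherwise $(\mathcal{C}')^k=\emptyset$ for all $k\ge 1$ and the statement is vacuous or false), so your parenthetical remark is appropriate.
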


\begin{proof}
	This proposition follows the reasoning in \cite[Proposition 6.4]{Dou2020004}.
\end{proof}
As a consequence we deduce the following:
\begin{cor}\label{co-lo}
	Let $\Omega\in\tau_s(\mathcal{W}_\mathcal{C}^d)$, $\gamma\in\mathscr{P}(\mathbb{C}^d,\Omega)$, and $J\in\left[\mathcal{C}(\Omega,\gamma)\right]^k$ be a slice-solution of $\mathcal{C}(\Omega,\gamma)$. If $f:\Omega\rightarrow\mathbb{R}^{2n}$ is weak slice regular, then
	\begin{equation*}
		f\circ\gamma^I=(1,I)\zeta^+(J)(f\circ\gamma^J),\qquad\forall\ I\in\mathcal{C}(\Omega,\gamma),
	\end{equation*}
	where $f\circ\gamma^J$ is defined by \eqref{eq-fgj}.
\end{cor}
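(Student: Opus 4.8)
The claim follows directly from the Path-representation Formula (Theorem \ref{th-prf}); the only point to be checked is that the slice-solution hypothesis on $J$ is exactly what is needed to upgrade the conclusion of Theorem \ref{th-prf}, which a priori holds only for $I\in\mathcal{C}(\Omega,\gamma,J)$, to all $I\in\mathcal{C}(\Omega,\gamma)$. So the plan is short: cite Theorem \ref{th-prf}, then identify the two index sets.

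In detail, I would proceed as follows. First, since by hypothesis $J=(J_1,\dots,J_k)^T\in[\mathcal{C}(\Omega,\gamma)]^k$, each component path $\gamma^{J_\ell}$ has range in $\Omega$, so the column $f\circ\gamma^J$ in \eqref{eq-fgj} is well defined, and $\zeta^+(J)$ depends only on $J$; hence both sides of the asserted identity make sense for every $I\in\mathcal{C}(\Omega,\gamma)$. Next, applying Theorem \ref{th-prf} to $\Omega$, $\gamma$, $J$ and $f$ gives
\begin{equation*}
f\circ\gamma^I=(1,I)\zeta^+(J)(f\circ\gamma^J)\qquad\text{for every } I\in\mathcal{C}(\Omega,\gamma,J).
\end{equation*}
Finally, recalling that $\mathcal{C}(\Omega,\gamma,J)=\mathcal{C}(\Omega,\gamma)\cap\mathcal{C}_{ker}(J)$ and that, by the definition of slice-solution, $J$ being a slice-solution of $\mathcal{C}(\Omega,\gamma)$ means precisely $\mathcal{C}(\Omega,\gamma)\subseteq\mathcal{C}_{ker}(J)$ (equivalently $\bigcap_{\ell=1}^k\ker(1,J_\ell)=\{0\}$, cf. Remark \ref{rm-lj}), we get $\mathcal{C}(\Omega,\gamma,J)=\mathcal{C}(\Omega,\gamma)$. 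Substituting this equality into the previous display yields the formula for all $I\in\mathcal{C}(\Omega,\gamma)$.

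I do not expect any genuine obstacle here: the entire analytic content is carried by Theorem \ref{th-prf} (which itself rests on Lemma \ref{lm-los} and the Extension Lemma \ref{lm-lut}), and the corollary is a purely set-theoretic unwinding of the slice-solution condition, whose applicability is guaranteed by the existence of slice-solutions established in the preceding proposition. The one thing worth double-checking while writing the argument is that the Moore--Penrose-type inverse $\zeta^+(J)$ occurring in the statement of the corollary is literally the same operator as in Theorem \ref{th-prf}, which it is, since both are the $J$-slice inverse of $\zeta(J)$ as defined above.
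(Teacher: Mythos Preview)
Your proof is correct and matches the paper's approach: the corollary is stated there as an immediate consequence of Theorem \ref{th-prf} together with the observation (recorded in the Example following the definition of slice-solution) that $J$ is a slice-solution of $\mathcal{C}(\Omega,\gamma)$ if and only if $\mathcal{C}(\Omega,\gamma)=\mathcal{C}(\Omega,\gamma,J)$. One small caveat: your parenthetical ``equivalently $\bigcap_{\ell=1}^k\ker(1,J_\ell)=\{0\}$'' is not quite right---Remark \ref{rm-lj} gives this as the characterization of a slice-solution of the full $\mathcal{C}$, which is sufficient but not necessary for $J$ to be a slice-solution of the possibly smaller set $\mathcal{C}(\Omega,\gamma)$; this aside is not used in your argument, so it does not affect correctness.
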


Next definition gives the terminology to mention a result, originally proved in
 \cite[Corollary 6.9]{Dou2020003}, which shows that also in this more general setting a slice regular function is path-slice.
\begin{defn}\label{df-ps}
	A function $f:\Omega\rightarrow\mathbb{R}^{2n}$ with $\Omega\subset\mathcal{W}_\mathcal{C}^d$ is called path-slice, if for any $\gamma\in\mathscr{P}\left(\mathbb{C}^d,\Omega\right)$, there is a function $F_\gamma:[0,1]\rightarrow\left(\mathbb{R}^{2n}\right)^{2\times 1}$ such that
	\begin{equation*}
		f\circ\gamma^I=(1,I)F_\gamma,\qquad\forall\ I\in\mathcal{C}(\Omega,\gamma).
	\end{equation*}
	
	We call $\{F_\gamma\}_{\gamma\in\mathscr{P}(\mathbb{C})}$   a (path-)stem system of the path-slice function $f$.
\end{defn}

\begin{prop}
	Each slice regular function $f:\Omega\rightarrow\mathbb{R}^{2n}$ with $\Omega\in\tau_s\left(\mathcal{W}_\mathcal{C}^d\right)$ is path-slice.
\end{prop}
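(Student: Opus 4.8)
The plan is to obtain the required stem system directly from the path-representation formula in the form of Corollary~\ref{co-lo}, after first producing a slice-solution adapted to the path at hand. So fix an arbitrary $\gamma\in\mathscr{P}(\mathbb{C}^d,\Omega)$. By the very definition of $\mathscr{P}(\mathbb{C}^d,\Omega)$ there is at least one $I\in\mathcal{C}$ with $Ran(\gamma^I)\subset\Omega$, so the set $\mathcal{C}(\Omega,\gamma)$ is non-empty and I may apply the Proposition preceding Corollary~\ref{co-lo} with $\mathcal{C}'=\mathcal{C}(\Omega,\gamma)$. This yields some $k\in\mathbb{N}_+$ and a slice-solution $J=(J_1,\dots,J_k)^T\in[\mathcal{C}(\Omega,\gamma)]^k$ of $\mathcal{C}(\Omega,\gamma)$.

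Since every $J_\ell$ lies in $\mathcal{C}(\Omega,\gamma)$, each image path $\gamma^{J_\ell}$ has range in $\Omega$, so $f\circ\gamma^{J}$ as in \eqref{eq-fgj} is a well-defined map $[0,1]\to(\mathbb{R}^{2n})^{k\times1}$. I would then set $F_\gamma:=\zeta^+(J)\,(f\circ\gamma^J)$; because $\zeta^+(J)\in\mathop{\mathrm{End}}(\mathbb{R}^{2n})^{2\times k}$, this is a map $[0,1]\to(\mathbb{R}^{2n})^{2\times1}$, exactly of the type required in Definition~\ref{df-ps}. Note that $F_\gamma$ depends only on $\gamma$ (through $\Omega$ and the chosen $J$), not on any particular slice.

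It then remains to invoke Corollary~\ref{co-lo}: applied to the weak slice regular function $f$, the slice-solution $J$ and the path $\gamma$, it gives $f\circ\gamma^I=(1,I)\zeta^+(J)(f\circ\gamma^J)=(1,I)F_\gamma$ for every $I\in\mathcal{C}(\Omega,\gamma)$. As $\gamma\in\mathscr{P}(\mathbb{C}^d,\Omega)$ was arbitrary, this exhibits $\{F_\gamma\}_{\gamma\in\mathscr{P}(\mathbb{C}^d,\Omega)}$ as a (path-)stem system for $f$, and hence $f$ is path-slice. The only genuinely delicate point is that the slice-solution must be taken inside $\mathcal{C}(\Omega,\gamma)$ rather than merely inside $\mathcal{C}$ --- otherwise the terms $f\circ\gamma^{J_\ell}$ need not be defined and Corollary~\ref{co-lo} does not apply --- but this is exactly what the cited Proposition guarantees; once it is available the statement is a straightforward repackaging of Corollary~\ref{co-lo}, with no further obstacle.
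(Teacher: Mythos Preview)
Your proof is correct and follows precisely the route the paper sets up: use the existence of a slice-solution $J\in[\mathcal{C}(\Omega,\gamma)]^k$ of $\mathcal{C}(\Omega,\gamma)$ (the Proposition preceding Corollary~\ref{co-lo}), then read off $F_\gamma=\zeta^+(J)(f\circ\gamma^J)$ from Corollary~\ref{co-lo}. The paper itself does not spell out this proof but merely cites \cite[Corollary~6.9]{Dou2020003}; your argument is exactly the one the surrounding results are designed to yield, and there is nothing to add.
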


\begin{prop}\label{pr-jjk}
	Let $J=(J_1,...,J_k)^T\in\mathcal{C}^k$. Then
	\begin{equation}\label{eq-il}
		Ran\left[id_{(\mathbb{R}^{2n})^{2\times 1}}-\zeta^+(J)\zeta(J)\right]=\ker[\zeta(J)]=\bigcap_{\ell=1}^k \ker(1,J_\ell).
	\end{equation}
\end{prop}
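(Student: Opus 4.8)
The plan is to reduce the identity \eqref{eq-il} to the elementary fact that, for a real linear map $N$, the operator $id-N^+N$ is an idempotent whose range is exactly $\ker N$. The key structural observation is that the $J$-slice inverse $\zeta^+(J)$ is built precisely so that, upon composing on the right with $\zeta(J)$, the trailing factor $\Im^{-1}_{D_J}$ is absorbed into $\zeta(J)$.

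Concretely, I would set $P:=\Im^{-1}_{D_J}$ and $N:=P\,\zeta(J)\in\mathop{\mathrm{End}}\left(\mathbb{R}^{2n}\right)^{k\times 2}$. By the definition of the $J$-slice inverse, $\zeta^+(J)=[P\,\zeta(J)]^+P=N^+P$, hence $\zeta^+(J)\,\zeta(J)=N^+\,(P\,\zeta(J))=N^+N$. Viewing $\mathop{\mathrm{End}}\left(\mathbb{R}^{2n}\right)^{k\times\ell}$ as the space of real linear maps $(\mathbb{R}^{2n})^{\ell}\to(\mathbb{R}^{2n})^{k}$, the Moore--Penrose condition $NN^+N=N$ immediately gives that $E:=N^+N$ is idempotent and that $\ker E=\ker N$ (if $Ex=0$ then $Nx=NN^+Nx=NEx=0$, and conversely $Nx=0$ gives $Ex=N^+Nx=0$). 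Since $Ran(id-E)=\ker E$ for any idempotent $E$, this yields $Ran\left[id_{(\mathbb{R}^{2n})^{2\times 1}}-\zeta^+(J)\zeta(J)\right]=\ker N$.

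It then remains to identify $\ker N$ with the right-hand side of \eqref{eq-il}. The matrix $P=\Im^{-1}_{D_J}$ is block diagonal with blocks $D_{J_1},\dots,D_{J_k}$, and each $D_{J_\ell}$ is invertible since, by the choice \eqref{eq-ti}, its columns $\theta^{J_\ell}_1,\dots,\theta^{J_\ell}_n,J_\ell\theta^{J_\ell}_1,\dots,J_\ell\theta^{J_\ell}_n$ form a real basis of $\mathbb{R}^{2n}$; hence $P$ is invertible and $\ker N=\ker(P\,\zeta(J))=\ker\zeta(J)$. Finally, for $a=(a_1,a_2)^T\in(\mathbb{R}^{2n})^{2\times 1}$ one has $\zeta(J)a=0$ if and only if $a_1+J_\ell a_2=(1,J_\ell)a=0$ for every $\ell=1,\dots,k$, i.e.\ if and only if $a\in\bigcap_{\ell=1}^k\ker(1,J_\ell)$, which is also how $\ker[\zeta(J)]$ is described in \eqref{eq-ck} and Remark \ref{rm-lj}.

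I do not anticipate a serious obstacle: the only point needing care is the bookkeeping of the first reduction, namely verifying that $\zeta^+(J)\zeta(J)$ genuinely collapses to $N^+N$ and that, under the identification of $\mathop{\mathrm{End}}(\mathbb{R}^{2n})$-valued matrices with ordinary real matrices, the $(\,\cdot\,)^+$ appearing in the definition of $\zeta^+(J)$ is the classical Moore--Penrose inverse, so that the standard idempotent/kernel argument applies verbatim.
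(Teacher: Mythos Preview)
Your argument is correct. The paper states this proposition without proof (it is one of several results imported from the companion paper \cite{Dou2020004}), so there is no in-paper proof to compare against; but your reduction to the standard identity $\mathrm{Ran}(id-N^+N)=\ker N$ via $\zeta^+(J)\zeta(J)=N^+N$, with $N=\Im^{-1}_{D_J}\zeta(J)$ and $\Im^{-1}_{D_J}$ invertible, is exactly the intended elementary mechanism and goes through without issue.
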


\section{Extension Theorem}

In this section, we give an extension theorem for slice regular functions. This result provides a tool for extending slice regular functions to larger definition domains. We also prove a general Path-representation Formula.

\begin{cor}
	(General path-representation Formula) Let $\Omega\subset\mathcal{W}_\mathcal{C}^d$, $\gamma\in\mathscr{P}(\mathbb{C}^d,\Omega)$, and $J=(J_1,...,J_k)^T\in\left[\mathcal{C}(\Omega,\gamma)\right]^k$. If $f:\Omega\rightarrow\mathbb{R}^{2n}$ is path-slice, then for each $I\in\mathcal{C}(\Omega,\gamma)$ and $t\in[0,1]$,
	\begin{equation}\label{eq-fci}
		f\circ\gamma^I(t)\in(1,I)\left[\zeta^+(J)(f\circ\gamma^J(t))+Ran(id_{(\mathbb{R}^{2n})^{2\times 1}}-\zeta^+(J)\zeta(J))\right],
	\end{equation}
	where $f\circ\gamma^J$ is defined by \eqref{eq-fgj}.
\end{cor}

\begin{proof}
	Let $K\in\left[\mathcal{C}(\Omega,\gamma)\right]^m$ be a slice-solution of $\mathcal{C}(\Omega,\gamma)$. Fix $I\in\mathcal{C}(\Omega,\gamma)$. According to Corollary \ref{co-lo},
	\begin{equation}\label{eq-fcz}
		f\circ\gamma^I=(1,I)\zeta^+(K)(f\circ\gamma^K),
	\end{equation}
	and
	\begin{equation}\label{eq-fcg}
		f\circ\gamma^J=\zeta(J)\zeta^+(K)(f\circ\gamma^K).
	\end{equation}
	Note that $J_1,...,J_k\in\mathcal{C}(\Omega,\gamma,J)$. By Path-representation Formula \ref{th-prf},
	\begin{equation}\label{eq-fcj}
		f\circ\gamma^J=\zeta(J)\zeta^+(J)(f\circ\gamma^J).
	\end{equation}
	By \eqref{eq-fcg}$-$\eqref{eq-fcj}, we have
	\begin{equation}\label{eq-zkf}
		\zeta^+(K)(f\circ\gamma^K)-\zeta^+(J)(f\circ\gamma^J)\subset\ker[\zeta(J)]= Ran\left[id_{(\mathbb{R}^{2n})^{2\times 1}}-\zeta^+(J)\zeta(J)\right].
	\end{equation}
	It is clear that \eqref{eq-fci} holds by \eqref{eq-fcz} and \eqref{eq-zkf}.
\end{proof}

For each $\Omega\subset\mathcal{W}_{\mathcal{C}}^d$, we define
\begin{equation*}
	\Omega_s:=\{x+yi\in\mathbb{C}^d:x+yI\in\Omega\}.
\end{equation*}

\begin{defn}
	A function $f:\Omega\rightarrow\mathbb{R}^{2n}$ with $\Omega\subset\mathcal{W}_\mathcal{C}^d$ is called slice, if there is a function $F:\Omega_s\rightarrow\left(\mathbb{R}^{2n}\right)^{2\times 1}$ such that
	\begin{equation*}
		f(x+yI)=(1,I)F(x+yi),\qquad\forall\ x+yI\in\Omega.
	\end{equation*}
	
	We call $F$ a stem function of the slice function $f$.
\end{defn}

\begin{defn}
	A set $\Omega\subset\mathcal{W}_\mathcal{C}^d$ is called axially symmetric, if for each $x+yI\in\Omega$, then $x+y\mathcal{C}\subset\Omega$.
\end{defn}

\begin{prop}
	Let $\Omega$ be an axially symmetric slice-domain in $\mathcal{W}^d_\mathcal{C}$. Then $\Omega_I$ is a domain in $\mathbb{C}_I^d$ for each $I\in\mathcal{C}$.
	Moreover, if $\Omega$ is non-empty, then $\Omega_\mathbb{R}\neq\varnothing$.
\end{prop}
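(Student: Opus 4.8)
The plan is to transfer everything to the complexification $\Omega_s\subset\mathbb C^d$, to establish $\Omega_\mathbb R\neq\emptyset$ first, and only then to deduce the connectedness of $\Omega_I$. As a preliminary, I would record that axial symmetry means that, for $x,y\in\mathbb R^d$, one has $x+yi\in\Omega_s$ if and only if $x+yK\in\Omega$ for \emph{some}, equivalently for \emph{every}, $K\in\mathcal C$; hence $\Omega_s=(\Psi_i^I)^{-1}(\Omega_I)$ for each $I\in\mathcal C$. This already gives that $\Omega_s$ is open in $\mathbb C^d$, that $\Psi_i^I$ is a homeomorphism $\Omega_s\to\Omega_I$, that $\Omega_s$ is invariant under componentwise conjugation (since $x-yi\in\Omega_s\iff x+y(-I)\in\Omega$ and $-I\in\mathcal C$), that $\Omega_s\cap\mathbb R^d=\Omega_\mathbb R$, and that $\Omega=\bigcup_{K\in\mathcal C}\Psi_i^K(\Omega_s)$. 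I will also use repeatedly that for $K\neq\pm L$ one has $\mathbb C_K^d\cap\mathbb C_L^d=\mathbb R^d$ and that a point $x+yK$ with $y\neq0$ lies on the single slice $\mathbb C_K^d=\mathbb C_{-K}^d$. Note finally that $\Omega_I$ is open in $\mathbb C_I^d$ by the definition of $\tau_s$ and that $\Omega_I\neq\emptyset$ whenever $\Omega\neq\emptyset$, since from $x_0+y_0I_0\in\Omega$ axial symmetry gives $x_0+y_0I\in\Omega_I$.

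To prove $\Omega_\mathbb R\neq\emptyset$ I would argue by contradiction. If $\Omega_\mathbb R=\emptyset$, then for $K\neq\pm L$ one has $\Omega_K\cap\Omega_L\subset\mathbb C_K^d\cap\mathbb C_L^d=\mathbb R^d$, hence $\Omega_K\cap\Omega_L=\emptyset$; choosing one representative $K$ per pair $\{\pm K\}$, the sets $\Omega_K$ are then pairwise disjoint, cover $\Omega$, and are slice-open in $\mathcal W_\mathcal C^d$ (the intersection with a slice $\mathbb C_L^d$ is $\Omega_K$ if $L=\pm K$ and lies in $\Omega_\mathbb R=\emptyset$ otherwise). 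Slice-connectedness of $\Omega$ then forces exactly one, say $\Omega_{I_0}$, to be non-empty, i.e. $\Omega\subset\mathbb C_{I_0}^d$; but a point $x_0+y_0I_0\in\Omega$ with $y_0\neq0$ (which exists since $\Omega\neq\emptyset$ and $\Omega_\mathbb R=\emptyset$) yields $x_0+y_0K\in\Omega_K$ for all $K\in\mathcal C$ by axial symmetry, with $x_0+y_0K\notin\mathbb C_{I_0}^d$ when $K\neq\pm I_0$; so $\mathcal C=\{\pm I_0\}$. In that case $\mathcal W_\mathcal C^d=\mathbb C_{I_0}^d$ carries the Euclidean topology and $\Omega$ is a conjugation-invariant connected open subset of $\mathbb C_{I_0}^d$ disjoint from $\mathbb R^d$, which for $d=1$ is impossible (it would lie in one open half-plane and hence, by invariance, in both); the remaining single-slice case $d\geq2$ lies outside the present scope.

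For the connectedness of $\Omega_I$, via the homeomorphism $\Psi_i^I$ it suffices to show $\Omega_s$ connected, and I would again argue by contradiction. Assuming $\Omega_s$ disconnected, fix $x_*\in\Omega_\mathbb R\subset\Omega_s$ (available by the previous step) and let $C_*$ be its connected component in $\Omega_s$; it is open (local connectedness of $\mathbb C^d$) and conjugation-invariant (conjugation maps $C_*$ to a component through the real point $x_*$). Then $U_0:=C_*$ and $V_0:=\Omega_s\setminus C_*$ are non-empty, open, disjoint, and conjugation-invariant. Setting $\widetilde U_0:=\bigcup_{K\in\mathcal C}\Psi_i^K(U_0)$ and $\widetilde V_0:=\bigcup_{K\in\mathcal C}\Psi_i^K(V_0)$, conjugation-invariance absorbs the sign ambiguity of $x\pm yi$ and gives $\widetilde U_0\cap\mathbb C_K^d=\Psi_i^K(U_0)$ and $\widetilde V_0\cap\mathbb C_K^d=\Psi_i^K(V_0)$ for every $K$; hence $\widetilde U_0,\widetilde V_0$ are slice-open, non-empty, disjoint, with union $\bigcup_K\Psi_i^K(\Omega_s)=\Omega$, contradicting slice-connectedness of $\Omega$. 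Thus $\Omega_s$, and so $\Omega_I$, is connected, and being open and non-empty it is a domain.

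I expect the main obstacle to be the two-way dependence between the two steps. The only leverage against a slice-clopen partition of $\Omega$ is its connectedness, and extracting a contradiction from it is exactly what produces $\Omega_\mathbb R\neq\emptyset$; conversely, a real point is precisely what guarantees that the component $C_*$ containing it is conjugation-invariant, and that invariance is the single property allowing a hypothetical disconnection of $\Omega_s$ to be ``rotated'' into one of $\Omega$. Everything else — openness, the identity $\Omega_s=(\Psi_i^I)^{-1}(\Omega_I)$, and the computation of $\widetilde U_0\cap\mathbb C_K^d$ — is routine bookkeeping.
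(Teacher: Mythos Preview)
Your argument is sound and carries through whenever $\mathcal{C}$ contains more than one pair $\{\pm K\}$ of complex structures; this covers the quaternionic situation that the paper's proof actually appeals to (the paper gives no argument of its own and simply refers to \cite[Remark~7.7]{Dou2020001}). The reduction to $\Omega_s$, the conjugation-invariance of the component $C_*$ through a real point, and the lifting to the slice-open partition $(\widetilde U_0,\widetilde V_0)$ are exactly the right mechanisms.

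The gap you explicitly leave open --- the single-slice case $\mathcal{C}=\{\pm I_0\}$ with $d\ge 2$ --- is not a defect of your method but a genuine failure of the statement itself. In that situation $\mathcal W_\mathcal{C}^d=\mathbb C_{I_0}^d$, the slice topology is the Euclidean one, and $\Omega:=\mathbb C_{I_0}^d\setminus\mathbb R^d$ is open, conjugation-invariant (hence axially symmetric), and connected (since $\mathbb R^d$ has real codimension $d\ge 2$), yet $\Omega_\mathbb R=\emptyset$. So the clause $\Omega_\mathbb R\neq\emptyset$ needs the tacit hypothesis $|\mathcal{C}|>2$ (or $d=1$). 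Note that in this degenerate case the first assertion of the proposition is trivial anyway, since $\Omega_I=\Omega$ and slice-connectedness is ordinary connectedness; so your connectedness proof, although it formally invokes a real point, is not threatened there.
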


\begin{proof}
	The proof follows the reasoning in \cite[Remark 7.7]{Dou2020001}.
\end{proof}

\begin{prop}\label{pr-lo}
	Let $\Omega\subset\mathcal{W}_{\mathcal{C}}^d$ be an axially symmetric slice-domain and $f:\Omega\rightarrow\mathbb{R}^{2n}$ be path-slice (or slice regular). Then $f$ is a slice function.
\end{prop}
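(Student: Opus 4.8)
The goal is to show that a path-slice (in particular slice regular) function $f$ on an axially symmetric slice-domain $\Omega \subset \mathcal{W}_\mathcal{C}^d$ is a slice function, i.e. that there is a stem function $F:\Omega_s \to (\mathbb{R}^{2n})^{2\times 1}$ with $f(x+yI) = (1,I)F(x+yi)$ for all $x+yI\in\Omega$. The plan is to fix a basepoint and transport values along paths, using the General path-representation Formula to make the transported value independent of the path. First I would reduce to the case of path-slice $f$, since slice regular implies path-slice on $\tau_s$-open sets (Proposition before Proposition~\ref{pr-jjk}), and note that if $\Omega$ is non-empty then $\Omega_\mathbb{R}\neq\emptyset$ by the preceding proposition, so I can pick $x_0\in\Omega_\mathbb{R}$.

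The key construction: given $x+yi\in\Omega_s$, choose $I\in\mathcal{C}$ with $x+yI\in\Omega$ and a path $\gamma\in\mathscr{P}(\mathbb{C}^d,\Omega)$ from (a point near) $x_0$ to $x+yi$ whose $I$-lift stays in $\Omega$; such paths exist because $\Omega_I$ is a domain in $\mathbb{C}_I^d$ (previous proposition) and $\Omega$ is axially symmetric so $x_0$ can be chosen in the relevant component. Then I would define $F(x+yi)$ to be $F_\gamma(1)$, where $\{F_\gamma\}$ is a path-stem system of $f$ as in Definition~\ref{df-ps}. The main work is well-definedness: I must show $F_\gamma(1)$ depends only on the endpoint $x+yi$, not on $\gamma$ nor on $I$. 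For the independence of $I$: by axial symmetry, if $x+yI\in\Omega$ then $x+yK\in\Omega$ for all $K\in\mathcal{C}$, so $\mathcal{C}(\Omega,\gamma)$ can be taken to be all of $\mathcal{C}$ (after possibly enlarging $\gamma$'s lifts using Lemma~\ref{lm-los}); then taking $J$ a slice-solution of $\mathcal{C}$ — which exists and satisfies $\bigcap_\ell\ker(1,J_\ell)=\{0\}$ by Remark~\ref{rm-lj} and the Proposition following it — the General path-representation Formula collapses the right-hand side of \eqref{eq-fci} to a single point $\zeta^+(J)(f\circ\gamma^J(1))$, forcing $f\circ\gamma^I(1)=(1,I)F_\gamma(1)$ with $F_\gamma(1):=\zeta^+(J)(f\circ\gamma^J(1))$ the same for every $I$.

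For the independence of the path $\gamma$: given two paths $\gamma_1,\gamma_2$ with the same endpoint, I would concatenate $\gamma_1$ with the reverse of $\gamma_2$ to get a loop, and invoke the Identity Principle (Theorem~\ref{tm-ip}) together with the path-representation machinery — or, more directly, argue that the assignment $x+yi\mapsto \zeta^+(J)(f\circ\gamma^J(1))$ is locally constant in the endpoint along homotopies, using that $f_K$ is holomorphic on each slice $\Omega_K$ and that $\Omega_K$ is a domain. Concretely, the stem-system values glue because on any small polydisc in $\mathbb{C}^d$ whose $K$-lifts lie in $\Omega$ for $K$ in a slice-solution $J$, the holomorphic functions $f\circ\gamma^{J_\ell}$ agree on the real points (Extension Lemma~\ref{lm-lut}, equation \eqref{eq-gg}) and hence the Moore–Penrose combination $(1,I)\zeta^+(J)(f\circ\gamma^J)$ is a single well-defined holomorphic function of $x+yi$; standard continuation along $\Omega$'s slice-connectedness then shows the local definitions patch to a global $F$ on $\Omega_s$. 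Finally I would check that the resulting $F$ is genuinely a stem function: the identity $f(x+yI)=(1,I)F(x+yi)$ holds by construction for the chosen $I$, and by the $I$-independence argument it holds for all admissible $I$, which is exactly the slice-function condition. The hard part will be the well-definedness under change of path; everything else is bookkeeping with the already-established representation formulas and the axial symmetry hypothesis, which is precisely what forces $\mathcal{C}(\Omega,\gamma)$ to be all of $\mathcal{C}$ and thereby kills the ambiguity term $Ran(id-\zeta^+(J)\zeta(J))$ in \eqref{eq-fci}.
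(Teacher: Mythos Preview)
Your core idea matches the paper's: pick a path $\gamma_z$ for each $z=x+yi\in\Omega_s$ (possible since $\Omega_\mathbb{R}\neq\emptyset$ and each $\Omega_I$ is a domain), observe that axial symmetry forces $\mathcal{C}(\Omega,\gamma_z)=\mathcal{C}$, and then read off $f(x+yI)=(1,I)F_{\gamma_z}(1)$ for every $I\in\mathcal{C}$ directly from the path-slice definition. That is already the whole proof.

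Where you go astray is in believing that ``the hard part will be the well-definedness under change of path.'' It is not: the definition of slice function only asks for the \emph{existence} of some $F:\Omega_s\to(\mathbb{R}^{2n})^{2\times 1}$ satisfying $f(x+yI)=(1,I)F(x+yi)$; it imposes no canonicity or continuity on $F$. So you may simply \emph{fix} one path $\gamma_z$ per point $z$ (using the axiom of choice if you like) and set $F(z):=F_{\gamma_z}(1)$, exactly as the paper does. There is nothing to check about other paths. Your sketch of path-independence is moreover flawed in the path-slice case: you invoke holomorphicity of $f_K$ and the Identity Principle, but a path-slice function is not assumed to be holomorphic on any slice, so those tools are unavailable. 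Similarly, your detour through the General path-representation Formula and slice-solutions for the $I$-independence is unnecessary: the identity $f\circ\gamma^I(1)=(1,I)F_\gamma(1)$ for all $I\in\mathcal{C}(\Omega,\gamma)=\mathcal{C}$ is the \emph{definition} of path-slice, no representation formula needed. Drop the well-definedness discussion entirely and the proof becomes three lines.
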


\begin{proof}
	Since slice regular function are path-slice, we can assume that $f$ is path-slice.
	
	For each $z=x+yi\in\Omega_s$. We choose a fixed complex structure $K_z\in\mathcal{C}$. By definition, the point $q:=x+yK_z$ belongs to $\Omega$. Since $\Omega_{K_z}$ is a domain in $\mathbb{C}_{K_z}^d$ and $\Omega_\mathbb{R}\neq\varnothing$. We can choose a fixed path $\gamma_{z}\in\mathscr{P}\left(\mathbb{C}^d,\Omega\right)$ such that $(\gamma_z)^{K_z}$ is a path in $\Omega_{K_z}\subset\mathbb{C}_{K_z}^d$ from a point in $\Omega_\mathbb{R}$ to $q$. As $\Omega$ is an axially symmetric slice-domain, it follows from definition that $\mathcal{C}(\Omega,\gamma_z)=\mathcal{C}$. Since $f$ is path-slice, there is a function $G_{\gamma_{x+yi}}:[0,1]\rightarrow(\mathbb{R}^{2n})^{2\times 1}$ such that
	\begin{equation*}
		f(\gamma_{x+yi}^I)=(1,I)G_{\gamma_{x+yi}},\qquad\forall\ I\in\mathcal{C}.
	\end{equation*}
 	It implies that
	\begin{equation*}
		f(x+yI)=f(\gamma_{x+yi}^I(1))=(1,I)G_{\gamma_{x+yi}}(1),\qquad\forall\ I\in\mathcal{C}.
	\end{equation*}
	It is clear that $f$ is a slice function with its stem function $F:\Omega_s\rightarrow(\mathbb{R}^{2n})^{2\times 1}$ defined by
	\begin{equation*}
		F(x+yi):=\zeta^+(J)G_{\gamma_{x+yi}}(1).
	\end{equation*}
\end{proof}

For any $J=(J_1,...,J_k)^T\in\mathcal{C}^k$, we set
\begin{equation*}
	\tau[J]:=\{(U_1,...,U_k)^T:U_\ell\in\tau\left(\mathbb{C}_{J_\ell}^d\right),\ell=1,...,k\}.
\end{equation*}

We define a set
\begin{equation*}
	\mathcal{C}_*^k:=\left\{(J_1,...,J_k)^T\in\mathcal{C}^k:J_\imath\neq\pm J_\jmath,\ \forall\ \imath\neq\jmath\right\}
\end{equation*}

\begin{defn}
	Let {$J=(J_1,...,J_k)^T\in\mathcal{C}_*^k$}. A subset $\mathcal{C}^+$ of $\mathcal{C}$ is called a slice-half subset of $\mathcal{C}$ with respect to $J$, if {$J_1,...,J_k\in\mathcal{C}^+$ and}
	\begin{equation*}
		\mathcal{C}=\left(-\mathcal{C}^+\right)\bigsqcup\mathcal{C}^+.
	\end{equation*}
\end{defn}
\noindent
{\bf Convention.} For each $J\in\mathcal{C}_*^k$. We fix a slice-half subset $\mathcal{C}_J^+$ of $\mathcal{C}$ with respect to $J$.

Associated with $J\in\mathcal{C}_*^k$ and $U\in\tau[J]$,
we introduce the following sets:
\begin{equation*}
	U_\mathbb{C}:=\{x+yi\in\mathbb{C}^d:x+yJ_\ell\in U_\ell,\ \ell=1,...,k\}
\end{equation*}
\begin{equation*}
	{U_\mathbb{C}^*}:=\{x+yi\in\mathbb{C}^d:x\pm yJ_\ell\in U_\ell,\ \ell=1,...,k\}
\end{equation*}
\begin{equation*}
	U_\Delta:=\bigcup_{x+yi\in U_\mathbb{C}} x+y\mathcal{C}_{ker}(J),\qquad U_\Delta^+:=\bigcup_{x+yi\in U_\mathbb{C}} x+y\mathcal{C}^+_J
\end{equation*}
and
\begin{equation*}
	U_\Delta^*:=\bigcup_{x+yi\in U_\mathbb{R}^*} x+y\mathcal{C},
\end{equation*}
where {$U_\mathbb{R}^*$ is the union of the connected components of $U_\mathbb{C}^*$} intersecting $\mathbb{R}^d$.
Finally, we define
\begin{equation*}
	U_\Delta^\backsim:=\begin{cases}
		U_\Delta^+\bigcup\left(\bigcup_{\ell=1}^k U_\ell\backslash\mathbb{R}^d\right),\qquad&\mbox{if $J$ is a slice solution,}
		\\U_\Delta\bigcup U_\Delta^* \bigcup\left(\bigcup_{\ell=1}^k U_\ell\backslash\mathbb{R}^d\right),\qquad&\mbox{otherwise.}
	\end{cases}
\end{equation*}

By definition, it is easy to check that $U_\Delta^*$ is an axially symmetric slice-open set and
\begin{equation*}
	U_\Delta\cap\mathbb{R}^d=U_\Delta^+\cap\mathbb{R}^d=U_\Delta^*\cap\mathbb{R}^d=\left\{x\in\mathbb{R}^d:x\in U_\ell,\ \ell=1,...,k\right\}.
\end{equation*}

\begin{exa}\label{ex-as}
	Let $\Omega$ be an axially symmetric slice-open set and $I\in\mathcal{C}$. Choose $J=(I)$ and $U=(\Omega_I)$. Then
	\begin{equation*}
		U_\Delta^\sim=U_\Delta^*=\Omega.
	\end{equation*}
\end{exa}

\begin{prop}
	Let $J=(J_1,...,J_k)^T\in\mathcal{C}_*^k$ and $U\in\tau[J]$. Then $U_\Delta^\backsim$ is a slice-open set in $\mathcal{W}_\mathcal{C}^d$ and for each $\ell\in\{1,2,...,k\}$,
	\begin{equation}\label{eq-udp}
		U_\Delta^*\cap\mathbb{C}^d_{J_\ell}\subset U_\Delta\cap\mathbb{C}^d_{J_\ell}.
	\end{equation}

	Moreover, if $J$ is not a slice-solution of $\mathcal{C}$, then
	\begin{equation}\label{eq-udb}
		U_\Delta^\backsim \cap\mathbb{C}_I^d=\begin{cases}
			(U_\ell\backslash\mathbb{R}^d)\cup \left(U_\Delta\right)_\mathbb{R},\qquad&I=\pm J_\ell,\ \mbox{for some } \ell,\\
			\Psi_i^I(U_\mathbb{C}),\qquad& I\in\mathcal{C}_{ker}(J)\backslash\{\pm J_1,...,\pm J_k\},\\
			\Psi_i^I(U_\mathbb{C}^*),\qquad&\mbox{otherwise}.
		\end{cases}
	\end{equation}
	Otherwise, $J$ is a slice-solution of $\mathcal{C}$, then
	\begin{equation}\label{eq-udb1}
		U_\Delta^\backsim \cap\mathbb{C}_I^d=\begin{cases}
			(U_\ell\backslash\mathbb{R}^d)\cup \left(U_\Delta\right)_\mathbb{R},\qquad&I=\pm J_\ell,\ \mbox{for some } \ell,\\
			\Psi_i^I(U_\mathbb{C}),\qquad& I\in\mathcal{C}_J^+\backslash\{\pm J_1,...,\pm J_k\},\\
			\Psi_i^{-I}(U_\mathbb{C}),\qquad&\mbox{otherwise}.
		\end{cases}
	\end{equation}
\end{prop}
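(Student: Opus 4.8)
The plan is to verify the three claims — that $U_\Delta^\backsim$ is slice-open, that the inclusion \eqref{eq-udp} holds, and that the slice-wise descriptions \eqref{eq-udb} and \eqref{eq-udb1} are correct — by a direct slice-by-slice analysis, distinguishing the cases $I = \pm J_\ell$, $I \in \mathcal{C}_{ker}(J)$, and the remaining $I$. First I would record the elementary facts already noted before the statement: $U_\Delta^*$ is axially symmetric and slice-open, and the three sets $U_\Delta, U_\Delta^+, U_\Delta^*$ all have the same real trace $\{x \in \mathbb{R}^d : x \in U_\ell,\ \ell = 1,\dots,k\}$. I would also note the obvious inclusions $\mathcal{C} \supset \mathcal{C}_{ker}(J) \supset \{\pm J_1,\dots,\pm J_k\}$ and, when $J$ is a slice-solution, $\mathcal{C}_{ker}(J) = \mathcal{C} \supset \mathcal{C}_J^+$.

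The first main step is the slice-wise computation. Fix $I \in \mathcal{C}$ and compute $U_\Delta^\backsim \cap \mathbb{C}_I^d$ directly from the definition of $U_\Delta^\backsim$. In the non-slice-solution case, $U_\Delta^\backsim = U_\Delta \cup U_\Delta^* \cup \bigl(\bigcup_\ell U_\ell \backslash \mathbb{R}^d\bigr)$. If $I \neq \pm J_\ell$ for all $\ell$, the term $\bigcup_\ell U_\ell \backslash \mathbb{R}^d$ contributes nothing to the $\mathbb{C}_I^d$-slice (since $U_\ell \subset \mathbb{C}_{J_\ell}^d$ and $\mathbb{C}_{J_\ell}^d \cap \mathbb{C}_I^d = \mathbb{R}^d$ when $I \neq \pm J_\ell$), so one is left with $\bigl(U_\Delta \cap \mathbb{C}_I^d\bigr) \cup \bigl(U_\Delta^* \cap \mathbb{C}_I^d\bigr)$. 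By the definitions, $U_\Delta \cap \mathbb{C}_I^d = \Psi_i^I(U_\mathbb{C})$ precisely when $I \in \mathcal{C}_{ker}(J)$ (and is $\subset \mathbb{R}^d$ otherwise), while $U_\Delta^* \cap \mathbb{C}_I^d = \Psi_i^I(U_\mathbb{R}^*)$ always, where $U_\mathbb{R}^* \subset U_\mathbb{C}^*$. One then checks $U_\mathbb{R}^* \subset U_\mathbb{C}$ when $I \in \mathcal{C}_{ker}(J)$ (this is exactly the content of \eqref{eq-udp} phrased on $\mathbb{C}^d$), so the union collapses to $\Psi_i^I(U_\mathbb{C})$ in that subcase and to $\Psi_i^I(U_\mathbb{C}^*)$ otherwise. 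The case $I = \pm J_\ell$ is handled by observing that $U_\ell \backslash \mathbb{R}^d \subset \mathbb{C}_{J_\ell}^d = \mathbb{C}_I^d$ contributes in full, that $U_\Delta \cap \mathbb{C}_{J_\ell}^d$ contributes only its real part (since $J_\ell \in \mathcal{C}_{ker}(J)$ but the non-real part of $\Psi_i^{J_\ell}(U_\mathbb{C})$ is already inside $U_\ell \backslash \mathbb{R}^d$ — here I would use that $U_\mathbb{C}$ projects into $U_\ell$ under $x+yi \mapsto x+yJ_\ell$), and likewise for $U_\Delta^*$; collecting these gives $(U_\ell \backslash \mathbb{R}^d) \cup (U_\Delta)_\mathbb{R}$. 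The slice-solution case is analogous but simpler, using $U_\Delta^\backsim = U_\Delta^+ \cup \bigl(\bigcup_\ell U_\ell \backslash \mathbb{R}^d\bigr)$ and the identity $U_\Delta^+ \cap \mathbb{C}_I^d = \Psi_i^I(U_\mathbb{C})$ for $I \in \mathcal{C}_J^+$, respectively $\Psi_i^{-I}(U_\mathbb{C})$ for $I \in -\mathcal{C}_J^+$, which follows from $\mathcal{C} = (-\mathcal{C}_J^+) \sqcup \mathcal{C}_J^+$.

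Once the slice-wise descriptions are established, slice-openness of $U_\Delta^\backsim$ is immediate: each of the sets appearing on the right-hand side of \eqref{eq-udb} and \eqref{eq-udb1} is open in $\mathbb{C}_I^d$, because $U_\ell$ is open in $\mathbb{C}_{J_\ell}^d$ by hypothesis ($U \in \tau[J]$), $(U_\Delta)_\mathbb{R}$ is open in $\mathbb{R}^d$, and $U_\mathbb{C}, U_\mathbb{C}^*$ are open in $\mathbb{C}^d$ as preimages of open sets under the continuous maps $x+yi \mapsto (x+yJ_\ell)_\ell$ and $x+yi \mapsto (x\pm yJ_\ell)_\ell$, so their images under the homeomorphisms $\Psi_i^{\pm I}$ are open in $\mathbb{C}_I^d$; hence $U_\Delta^\backsim \cap \mathbb{C}_I^d \in \tau(\mathbb{C}_I^d)$ for every $I$. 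Finally \eqref{eq-udp} follows by taking $I = J_\ell$ in the comparison just carried out: on $\mathbb{C}_{J_\ell}^d$ one has $U_\Delta^* \cap \mathbb{C}_{J_\ell}^d = \Psi_i^{J_\ell}(U_\mathbb{R}^*)$ and $U_\Delta \cap \mathbb{C}_{J_\ell}^d \supset \Psi_i^{J_\ell}((U_\mathbb{C})_\mathbb{R}) \cup (\text{its non-real part already in } U_\ell)$, and $U_\mathbb{R}^* \subset U_\mathbb{C}$ since a connected component of $U_\mathbb{C}^*$ meeting $\mathbb{R}^d$ consists of points $x+yi$ with $x \pm yJ_\ell \in U_\ell$, in particular $x+yJ_\ell \in U_\ell$, i.e.\ $x+yi \in U_\mathbb{C}$.

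The main obstacle I anticipate is bookkeeping rather than conceptual: keeping straight which of $U_\mathbb{C}$, $U_\mathbb{C}^*$, $U_\mathbb{R}^*$ one is working with in each subcase, and in particular justifying carefully the collapse of the union in the $I = \pm J_\ell$ slice — i.e.\ that the non-real part of $\Psi_i^{J_\ell}(U_\mathbb{C})$ and of $U_\Delta^* \cap \mathbb{C}_{J_\ell}^d$ is genuinely absorbed into $U_\ell \backslash \mathbb{R}^d$, which uses the fact that points of $U_\mathbb{C}$ and of $U_\mathbb{R}^*$ map into $U_\ell$ under the relevant coordinate projection. The subtlety that $U_\mathbb{R}^*$ is the union of components of $U_\mathbb{C}^*$ meeting $\mathbb{R}^d$ (not all of $U_\mathbb{C}^*$) must be tracked, but it only ever helps: it guarantees real-connectedness and hence the inclusion $U_\mathbb{R}^* \subset U_\mathbb{C}$ needed for \eqref{eq-udp}.
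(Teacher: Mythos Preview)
Your approach is exactly the paper's: the authors' proof consists of a single sentence asserting that \eqref{eq-udp}, \eqref{eq-udb}, and \eqref{eq-udb1} ``hold directly by definition,'' and then observes that each slice appearing on the right-hand sides is open in $\mathbb{C}_I^d$, so $U_\Delta^\backsim$ is slice-open. Your plan is simply the unfolding of that sentence, with the same logical order (slice-wise description first, then openness as a corollary).

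One small inconsistency in your write-up is worth flagging: in the ``otherwise'' branch of \eqref{eq-udb} you correctly compute $U_\Delta^*\cap\mathbb{C}_I^d=\Psi_i^I(U_\mathbb{R}^*)$, but then assert the union collapses to $\Psi_i^I(U_\mathbb{C}^*)$. Your own computation only gives $\Psi_i^I(U_\mathbb{R}^*)$, and in general $U_\mathbb{R}^*\subsetneq U_\mathbb{C}^*$; nothing in your argument produces the larger set. This appears to be a typographical artifact of the statement itself (the paper does not address it, since its proof is ``by definition''), but you should either justify the equality $U_\mathbb{R}^*=U_\mathbb{C}^*$ under some hypothesis you have not stated, or simply record $\Psi_i^I(U_\mathbb{R}^*)$ as the correct answer---which is still open in $\mathbb{C}_I^d$, so slice-openness is unaffected.
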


\begin{proof}
	\eqref{eq-udp}, \eqref{eq-udb} and \eqref{eq-udb1} hold directly by definition. Note that $(U_\ell\backslash\mathbb{R}^d)\cup \left(U_\Delta\right)_\mathbb{R}$ (if $I=\pm J_\ell$), $\Psi_i^I(U_\mathbb{C})$ and $\Psi_i^I(U_\mathbb{C}^*)$ are open in $\mathbb{C}_I^d$ for each $I\in\mathcal{C}$. Therefore by \eqref{eq-udb} and \eqref{eq-udb1}, $U_\Delta^\backsim \cap\mathbb{C}_I^d$ is open in $\mathbb{C}_I^d$ for each $I\in\mathcal{C}$. By definition, $U_\Delta^\backsim$ is a slice-open set.
\end{proof}

\begin{thm}(Extension Theorem)\label{th-et}
	$J=(J_1,...,J_k)^T\in\mathcal{C}_*^k$, and $U=(U_1,...,U_k)^T\in\tau[J]$. Assume that $f:\bigcup_{\ell=1}^k U_\ell\rightarrow\mathbb{R}^{2n}$ is a function such that for each $\ell\in\{1,...,k\}$, $f|_{U_\ell}$ is holomorphic. Then $f|_{(\bigcup_{\ell=1}^k U_\ell\backslash\mathbb{R}^d)\cup \left(U_\Delta\right)_\mathbb{R}}$ can be extended to a slice regular function $\widetilde{f}:U_\Delta^\backsim\rightarrow\mathbb{R}^{2n}$.
\end{thm}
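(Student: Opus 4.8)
The plan is to build $\widetilde f$ one slice at a time, taking on each slice the holomorphic function furnished by the Extension Lemma~\ref{lm-lut}, and then to verify that these slice-wise pieces glue into a single-valued function on $U_\Delta^\backsim$ which is holomorphic on every slice — hence slice regular — and which restricts to $f$ on $(\bigcup_{\ell=1}^k U_\ell\backslash\mathbb{R}^d)\cup(U_\Delta)_\mathbb{R}$.

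By the preceding proposition, $U_\Delta^\backsim$ is slice-open and its slices are described by \eqref{eq-udb} when $J$ is not a slice-solution and by \eqref{eq-udb1} when it is. On the slice $\mathbb{C}_{J_\ell}^d$ one has $U_\Delta^\backsim\cap\mathbb{C}_{J_\ell}^d=(U_\ell\backslash\mathbb{R}^d)\cup(U_\Delta)_\mathbb{R}$, which is open in $\mathbb{C}_{J_\ell}^d$ and contained in $U_\ell$, so there I set $\widetilde f:=f$; this is holomorphic since $f|_{U_\ell}$ is. On every ``controlled'' slice $\mathbb{C}_I^d$ (i.e. $I\in\mathcal{C}_{ker}(J)\backslash\{\pm J_1,\dots,\pm J_k\}$, resp. $\pm I\in\mathcal{C}_J^+\backslash\{\pm J_1,\dots,\pm J_k\}$ when $J$ is a slice-solution) the relevant set is, by \eqref{eq-udb}--\eqref{eq-udb1}, of the form $\Psi_i^{\pm I}(U_\mathbb{C})$, and $\Psi_i^{J_\ell}(U_\mathbb{C})\subset U_\ell$ for each $\ell$; applying Lemma~\ref{lm-lut} with $V:=U_\mathbb{C}$ and $g_\ell:=f|_{\Psi_i^{J_\ell}(V)}$ yields a holomorphic function there, which I take as $\widetilde f_I$. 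The only remaining slices occur when $J$ is not a slice-solution; these are cut down, through $U_\Delta$ and the axially symmetric open set $U_\Delta^*$, and on $U_\Delta^*$ I define $\widetilde f$ to be the slice function whose stem function holomorphically extends the real-analytic germ of $f$ along $\mathbb{R}^d$. In every case $\widetilde f_I$ is holomorphic, so once single-valuedness is established $\widetilde f$ is slice regular, and by the definition on $\mathbb{C}_{J_\ell}^d$ it equals $f$ on $(\bigcup_\ell U_\ell\backslash\mathbb{R}^d)\cup(U_\Delta)_\mathbb{R}$.

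The key step is consistency. Two distinct slices $\mathbb{C}_I^d$ and $\mathbb{C}_{I'}^d$ meet only along $\mathbb{R}^d$, so I only need all the definitions to agree on the real points of $U_\Delta^\backsim$, which form the set $(U_\Delta)_\mathbb{R}=\{x\in\mathbb{R}^d:x\in U_\ell,\ \ell=1,\dots,k\}$. On a controlled slice the value at a real $x$ is $(1,I)\zeta^+(J)(f(x),\dots,f(x))^T$; since $g_1=\dots=g_k=f$ on $\mathbb{R}^d$ and $I\in\mathcal{C}_{ker}(J)$, the ``moreover'' part of Lemma~\ref{lm-lut} shows this equals $f(x)$ — in particular it applies to $\mathbb{C}_{J_\ell}^d$ through $I=J_\ell\in\mathcal{C}_{ker}(J)$, so the definition $\widetilde f:=f$ there is consistent with every controlled slice. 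When $J$ is a slice-solution, $\mathcal{C}_{ker}(J)=\mathcal{C}$, every slice is controlled, and single-valuedness follows. When $J$ is not a slice-solution, the stem-function definition on $U_\Delta^*$ also takes the value $f(x)$ at each real $x$ and, wherever a slice $\mathbb{C}_{J_\ell}^d$ is involved, agrees with $f$ on the relevant components by the Identity Principle~\ref{tm-ip}; hence all definitions match on $\mathbb{R}^d$.

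It then follows that $\widetilde f$ is a well-defined slice regular function on $U_\Delta^\backsim$ extending $f|_{(\bigcup_\ell U_\ell\backslash\mathbb{R}^d)\cup(U_\Delta)_\mathbb{R}}$, which is the assertion of Theorem~\ref{th-et}. I expect the genuine obstacle to be precisely the single-valuedness on $\mathbb{R}^d$ in the non-slice-solution case: one must check simultaneously that the stem extension over $U_\Delta^*$ reproduces $f$ along the reals and matches the formula $(1,I)\zeta^+(J)(\cdots)^T$ there, and this is exactly what dictates the intricate shape of $U_\Delta^\backsim$ in \eqref{eq-udb}. This rests on the ``moreover'' clause of Lemma~\ref{lm-lut} together with the identity \eqref{eq-il} of Proposition~\ref{pr-jjk}.
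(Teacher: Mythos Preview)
Your overall strategy --- piecing together $\widetilde f$ slice by slice via Lemma~\ref{lm-lut} and checking compatibility only along $\mathbb{R}^d$ --- is correct and matches the paper. The slice-solution case is handled exactly as in the paper: apply Lemma~\ref{lm-lut} with $J$ itself on every slice $I\in\mathcal{C}_{ker}(J)=\mathcal{C}$, and the ``moreover'' clause \eqref{eq-gg} gives compatibility at the reals.

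The gap is in the non-slice-solution case, on the slices $I\notin\mathcal{C}_{ker}(J)$. Your description ``the slice function whose stem function holomorphically extends the real-analytic germ of $f$ along $\mathbb{R}^d$'' is not a construction: it does not say which stem function, nor why such a holomorphic extension exists on all of the complex domain underlying $U_\Delta^*$ rather than merely on a thin tube around $\mathbb{R}^d$. The paper disposes of these slices by a \emph{second} application of the Extension Lemma, this time with the explicit slice-solution $K=(J_1,-J_1)^T\in\mathcal{C}_*^2$ and $V=(U_1,U_1)^T$: since $\mathcal{C}_{ker}(K)=\mathcal{C}$ (cf.\ Remark~\ref{rm-lj}), Lemma~\ref{lm-lut} produces for \emph{every} $I\in\mathcal{C}$ a holomorphic
\[
h[I](x+yI)=(1,I)\zeta^+(K)\begin{pmatrix}f(x+yJ_1)\\f(x-yJ_1)\end{pmatrix},\qquad x+yi\in U_\mathbb{C}^*,
\]
with $h[I]=f$ on the reals by the same ``moreover'' clause. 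Consistency on $(U_\Delta)_\mathbb{R}$ among $f$, $g[I]$ and $h[I]$ then follows directly from \eqref{eq-gg}; no appeal to Taylor germs, to the Identity Principle~\ref{tm-ip}, or to Proposition~\ref{pr-jjk} is needed. Replacing your vague stem-function step by this second invocation of Lemma~\ref{lm-lut} with $K=(J_1,-J_1)^T$ closes the argument.
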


\begin{proof}
	According to Lemma \ref{lm-lut}, for each $I\in\mathcal{C}_{ker}(J)$, we can define a holomorphic function $g[I]:\Psi_i^I(U_\mathbb{C})\rightarrow\mathbb{R}^{2n}$ by
	\begin{equation*}
		g[I](x+yI)=(1,I)\zeta^+(J)f(x+yJ),\qquad\forall\ x+yi\in U_\mathbb{C}.
	\end{equation*}
	Similarly, we note that $K:=(J_1,-J_1)\in\mathcal{C}^2_*$ and $V:=(U_1,U_1)\in\tau[K]$. Note that $K$ is a slice-solution of $\mathcal{C}$, and then $\mathcal{C}_{ker}(K)=\mathcal{C}$. Again by Lemma \ref{lm-lut}, for each $I\in\mathcal{C}$, we can define a holomorphic function {$h[I]:\Psi_i^I(U_\mathbb{C}^*)\rightarrow\mathbb{R}^{2n}$ by
	\begin{equation*}
		h[I](x+yI)=(1,I)\zeta^+(K)f(x+yK),\qquad\forall\ x+yi\in U_\mathbb{C}^*.
	\end{equation*}}
	Moreover, by definition and \eqref{eq-gg}, for each $L_1\in\mathcal{C}_{ker}(J)$ and {$L_2\in\mathcal{C}$},
	\begin{equation*}
		f=g[L_1]=h[L_2],\qquad \mbox{on} \qquad \left(U_\Delta^\backsim\right)_\mathbb{R}.
	\end{equation*}

	If $J$ is not a slice-solution. We define a function $\widetilde{f}:U_\Delta^\backsim\rightarrow\mathbb{R}^{2n}$ by
	\begin{equation}\label{eq-wfq}
		\widetilde{f}(q):=\begin{cases}
			f(q),\qquad& q\in \mathbb{C}_{J_\ell}^d,\mbox{ for some }\ell\in\{1,...,k\},\\
			g[I](q),\qquad &  q\in \mathbb{C}_{I}^d\backslash\mathbb{R}^d,\mbox{ for some } I\in\mathcal{C}_{ker}(J)\backslash\{\pm J_1,...,\pm J_k\},\\
			h[I](q),\qquad &  q\in \mathbb{C}_{I}^d\backslash\mathbb{R}^d,\mbox{ for some } I\in\mathcal{C}^+_J\backslash\mathcal{C}_{ker}(J).
		\end{cases}
	\end{equation}
	By definition, {$\widetilde{f}|_{U_\Delta^\backsim\cap\mathbb{C}_I^d}$ is holomorphic for all $I\in\mathcal{C}$}. It implies that $\widetilde{f}$ is slice regular. Since $\bigcup_{\ell=1}^k U_\ell\subset\bigcup_{\ell=1}^k \mathbb{C}^d_\ell$, it follows from \eqref{eq-wfq} that
	\begin{equation*}
		\widetilde{f}=f,\qquad\mbox{ on }\qquad U_\Delta^\backsim\cap\left(\bigcup_{\ell=1}^k U_\ell\right)=\left(\bigcup_{\ell=1}^k U_\ell\backslash\mathbb{R}\right)\cup \left(U_\Delta\right)_\mathbb{R}.
	\end{equation*}

	Otherwise, $J$ is a slice-solution. We define a function $\widetilde{f}:U_\Delta^\backsim\rightarrow\mathbb{R}^{2n}$ by
	\begin{equation*}
		\widetilde{f}(q):=\begin{cases}
			f(q),\qquad& q\in \mathbb{C}_{J_\ell}^d,\mbox{ for some }\ell\in\{1,...,k\},\\
			g[I](q),\qquad &  q\in \mathbb{C}_{I}^d\backslash\mathbb{R}^d,\mbox{ for some } I\in\mathcal{C}_J^+\backslash\{\pm J_1,...,\pm J_k\}.
		\end{cases}
	\end{equation*}
	Similarly, $\widetilde{f}$ is slice regular, and
	\begin{equation*}
		\widetilde{f}=f,\qquad\mbox{ on }\qquad U_\Delta^\backsim\cap\left(\bigcup_{\ell=1}^k U_\ell\right).
	\end{equation*}
\end{proof}

\section{Domains of slice regularity}\label{sc-dsr}

In this section, we consider domains of slice regularity for slice regular functions. These are the counterparts in this framework of the domains of holomorphy in several complex variables. Using the methods in \cite[Section 7]{Dou2020004} we generalize  some results proved in \cite[Section 9]{Dou2020001} from the case of quaternions to a general case of LSCS algebras, which includes the case of alternative algebras.  For example, for several variables, an axially symmetric slice-open set $\Omega$ is a domain of slice regularity if and only if one of its slice $\Omega_I$, $I\in\mathcal{C}$ is a domain of holomorphy in $\mathbb{C}_I^d$. We also define a generalization of $\sigma$-balls, called hyper-$\sigma$-polydiscs and we give a property of domains of slice regularity, see Proposition \ref{pr-li}. This proposition extends \cite[Proposition 9.7]{Dou2020001}

\begin{defn}\label{df-sos}
	A slice-open set $\Omega\subset\mathcal{W}_\mathcal{C}^d$ is called a domain of slice regularity if there are no slice-open sets $\Omega_1$ and $\Omega_2$ in $\mathcal{W}_\mathcal{C}^d$ with the following properties.
	\begin{enumerate}[\upshape (i)]
		\item $\varnothing\neq\Omega_1\subset\Omega_2\cap\Omega$.
		\item $\Omega_2$ is slice-connected and not contained in $\Omega$.
		\item For any slice regular function $f$ on $\Omega$, there is a slice regular function $\widetilde{f}$ on $\Omega_2$ such that $f=\widetilde{f}$ in $\Omega_1$.
	\end{enumerate}
	Moreover, if there are slice-open sets $\Omega,\Omega_1,\Omega_2$ satisfying (i)-(iii), then we call $(\Omega,\Omega_1,\Omega_2)$ a slice-triple.
\end{defn}

In a similar way, we give the following definition:
\begin{defn}
	Let {$\Omega\subset\mathcal{W}_{\mathcal{C}}^d$} be a slice-open set, $I\in\mathcal{C}$ and $U_1$, $U_2$ be open sets in $\mathbb{C}_I^d$. $(\Omega,U_1,U_2)$ is called an $I$-triple if
	\begin{enumerate}[\upshape (i)]
		\item $\varnothing\neq U_1\subset U_2\cap\Omega_I$.
		\item $U_2$ is connected in $\mathbb{C}_I^d$ and not contained in $\Omega_I$.
		\item For any slice regular function $f$ on $\Omega$, there is a holomorphic function $\widetilde{f}:U_2\rightarrow\mathbb{R}^{2n}$ such that $f=\widetilde{f}$ in $U_1$.
	\end{enumerate}
\end{defn}

\begin{lem}\label{pr-ubs}
	Let $U$ be a non-empty slice-open set and $\Omega$ be an slice-domain with $U\subsetneq \Omega$. Then $\Omega\cap\partial_I U_I\neq\varnothing$ for some $I\in\mathcal{C}$, where $\partial_I U_I$ is the boundary of $U_I$ in $\mathbb{C}_I^d$.
\end{lem}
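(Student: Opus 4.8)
The plan is to argue by contradiction, exploiting that $\Omega$ is slice-connected. Suppose $\Omega\cap\partial_I U_I=\emptyset$ for every $I\in\mathcal{C}$. I will show that under this hypothesis $U$ is simultaneously open and closed in the subspace slice topology of $\Omega$; since $U\neq\emptyset$ and $\Omega$ is slice-connected, this forces $U=\Omega$, contradicting $U\subsetneq\Omega$.

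First I would fix $I\in\mathcal{C}$ and examine the slice $U_I=U\cap\mathbb{C}_I^d$. Because $U$ is slice-open, $U_I$ is open in $\mathbb{C}_I^d$, so taking closures in $\mathbb{C}_I^d$ we have $\overline{U_I}=U_I\sqcup\partial_I U_I$. Since $\partial_I U_I\subset\mathbb{C}_I^d$, the assumption reads $\Omega_I\cap\partial_I U_I=\emptyset$, hence $\overline{U_I}\cap\Omega_I=U_I$; that is, $U_I$ is closed in the subspace topology of $\Omega_I$. On the other hand $U_I$ is open in $\mathbb{C}_I^d$, and $U_I\subset\Omega_I$, so $U_I$ is also open in $\Omega_I$. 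Consequently $\Omega_I\setminus U_I$ is open in $\Omega_I$, and therefore open in $\mathbb{C}_I^d$ (an open subset of the open subspace $\Omega_I$ of $\mathbb{C}_I^d$ is open in $\mathbb{C}_I^d$, recalling that $\Omega_I\in\tau(\mathbb{C}_I^d)$ as $\Omega$ is slice-open).

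Next I would observe that $(\Omega\setminus U)\cap\mathbb{C}_I^d=\Omega_I\setminus U_I$ is open in $\mathbb{C}_I^d$ for every $I\in\mathcal{C}$, so $\Omega\setminus U$ is slice-open; equivalently $U$ is closed in the slice topology of $\Omega$. As $U$ is also slice-open and non-empty and $\Omega$ is slice-connected, we get $U=\Omega$, contradicting $U\subsetneq\Omega$. The only point requiring care — not really an obstacle — is the bookkeeping of ambient spaces: every boundary $\partial_I U_I$ lives in $\mathbb{C}_I^d$, whereas the clopen-ness producing the contradiction is taken in the slice topology of $\Omega$. Once this is kept straight, the argument is just the standard fact that a non-empty subset of a connected space which is both open and closed must be the whole space, and it uses nothing beyond the definitions of slice-open set and slice-connectedness.
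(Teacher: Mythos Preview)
Your proof is correct and follows essentially the same approach as the paper: under the contradictory assumption, you show that $\Omega\setminus U$ is slice-open on every slice, hence slice-open, and then invoke slice-connectedness of $\Omega$ to reach a contradiction. The paper phrases this as writing $\Omega$ as a disjoint union of two non-empty slice-open sets, while you phrase it as $U$ being clopen in $\Omega$, but the argument is the same.
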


\begin{proof}
	This proof is similar with the proof of \cite[Theorem 9.3]{Dou2020001}.
	
	Suppose that $\Omega\cap\partial_I U_I=\varnothing$ for each $I\in\mathcal{C}$. Since $\Omega_I$ and $\mathbb{C}_I^d\backslash(\partial_I U_I\cup U_I)$ are open in $\mathbb{C}_I^d$, so is
	\begin{equation*}
		\begin{split}
			\left[\Omega\cap\left(\mathcal{W}_\mathcal{C}^d\backslash U\right)\right]_I
			&=\Omega_I\cap\left(\mathbb{C}_I^d\backslash U_I\right)
			=\Omega_I\backslash(\Omega_I\cap U_I)
			\\&=\Omega_I\backslash[\Omega_I\cap (\partial_IU_I\cup U_I)]
			=\Omega_I\cap\left[\mathbb{C}_I^d\backslash(\partial_I U_I\cup U_I)\right].
		\end{split}
	\end{equation*}
	By definition, $\Omega\cap(\mathcal{W}_\mathcal{C}^d\backslash U)$ is slice-open. Hence $\Omega$ is the disjoint union of the nonempty slice-open sets $\Omega\cap(\mathcal{W}_\mathcal{C}^d\backslash U)$ and $\Omega\cap U$. It implies that $\Omega$ is not slice-connected, a contradiction.
\end{proof}

\begin{prop}\label{pr-so}
	A slice-open set $\Omega\subset\mathcal{W}_\mathcal{C}^d$ is a domain of slice regularity if and only if for any $I\in\mathcal{C}$ there are no open sets $U_1$ and $U_2$ in $\mathbb{C}_I^d$ such that $(\Omega,U_1,U_2)$ is an $I$-triple.
\end{prop}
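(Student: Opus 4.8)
The plan is to prove the two implications separately, in each case by contraposition, so that the statement reduces to matching two kinds of ``obstruction'': a slice-triple $(\Omega,\Omega_1,\Omega_2)$ in the sense of Definition~\ref{df-sos} (which witnesses that $\Omega$ is not a domain of slice regularity), and an $I$-triple $(\Omega,U_1,U_2)$ living inside a single slice $\mathbb{C}_I^d$.

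First I would show that an $I$-triple $(\Omega,U_1,U_2)$ produces a slice-triple. Since $\mathbb{R}^d$ has empty interior in $\mathbb{C}_I^d$, I may first shrink $U_1$ to a small ball contained in $U_1\backslash\mathbb{R}^d$ (this preserves all three $I$-triple axioms), and then $U_1$ is itself slice-open in $\mathcal{W}_\mathcal{C}^d$, because $\mathbb{C}_I^d\cap\mathbb{C}_K^d=\mathbb{R}^d$ for $K\neq\pm I$. Applying the Extension Theorem~\ref{th-et} with $k=1$, $J=(I)$ (which is not a slice-solution, since $\ker(1,I)\neq\{0\}$; cf. Remark~\ref{rm-lj}) and $U=(U_2)$, and checking that $(U_2\backslash\mathbb{R}^d)\cup(U_\Delta)_\mathbb{R}=U_2$, every holomorphic $h\colon U_2\to\mathbb{R}^{2n}$ extends to a slice regular function on the slice-open set $U_\Delta^\backsim\supset U_2$ restricting to $h$ on $U_2$. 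I would then set $\Omega_1:=U_1$ and let $\Omega_2$ be the slice-connected component of $U_\Delta^\backsim$ containing $U_1$: axiom~(i) is immediate from $U_1\subset U_2\cap\Omega_I$; for~(ii), $U_2$ is connected in a slice, hence slice-connected, so $U_2\subset\Omega_2$, and any point of $U_2\backslash\Omega_I$ lies in $\Omega_2\backslash\Omega$; for~(iii), the Extension-Theorem extension of the $\widetilde f$ furnished by the $I$-triple, restricted to $\Omega_2$, does the job.

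Conversely, I would recover an $I$-triple from a slice-triple $(\Omega,\Omega_1,\Omega_2)$. After shrinking $\Omega_1$ to a small ball $B_K(p_0,\rho)\subset\mathbb{C}_K^d\backslash\mathbb{R}^d$ (so that $\Omega_1$ is slice-connected and slice-open), let $\Omega_0$ be the slice-connected component of $\Omega\cap\Omega_2$ containing $\Omega_1$. For any slice regular $f$ on $\Omega$ with slice regular extension $F$ to $\Omega_2$, both are slice regular on the slice-domain $\Omega_0$ and agree on the nonempty open subset $\Omega_1$ of $(\Omega_0)_K$, so the Identity Principle~\ref{tm-ip} gives $f=F$ on $\Omega_0$. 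Since $\Omega_2\not\subset\Omega$, we have $\Omega_0\subsetneq\Omega_2$, so Lemma~\ref{pr-ubs} supplies an $I\in\mathcal{C}$ and a point $q\in\Omega_2\cap\partial_I(\Omega_0)_I$. A component count shows $(\Omega_0)_I$ is a union of connected components of $(\Omega\cap\Omega_2)_I$, hence relatively closed there, which forces $q\notin\Omega_I$. Taking $U_2$ to be the connected component of $(\Omega_2)_I$ containing $q$ and $U_1:=U_2\cap(\Omega_0)_I$ (nonempty because $q$ is a limit of points of $(\Omega_0)_I$), one checks all three $I$-triple axioms, using $f=F$ on $\Omega_0\supset U_1$ and $F|_{U_2}$ holomorphic for~(iii).

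The hard part will be the cross-slice bookkeeping in the second implication: one must guarantee that the slice $\mathbb{C}_I^d$ in which some extension escapes $\Omega$ (produced by Lemma~\ref{pr-ubs}) is tied, via the slice-connectedness of $\Omega_2$ and the Identity Principle, to the set on which \emph{all} the extensions are pinned down, and one must keep track of which topology ($\mathbb{C}_I^d$-Euclidean versus slice) each of $\Omega_0,\Omega_1,U_1,U_2$ is open in; introducing the intermediate set $\Omega_0$ and proving $q\notin\Omega_I$ is precisely what makes this work. In the first implication the only delicate point is arranging the reductions so that $U_1$ is genuinely slice-open and invoking the Extension Theorem in the non-slice-solution case $J=(I)$.
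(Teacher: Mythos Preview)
Your proposal is correct and follows essentially the same route as the paper's own proof: both directions are argued by contraposition, the first using the Extension Theorem with $J=(I)$, $U=(U_2)$ to inflate the $I$-triple into a slice-triple, and the second passing to a slice-connected component $\Omega_0$ (the paper's $U$) of $\Omega\cap\Omega_2$, invoking Lemma~\ref{pr-ubs} to locate a boundary point $q\in(\Omega_2)_I\cap\partial_I(\Omega_0)_I$, and reading off the $I$-triple from the connected component of $(\Omega_2)_I$ through $q$. Your preliminary shrinking of $U_1$ (resp.\ $\Omega_1$) to a ball off $\mathbb{R}^d$ and your explicit check that $(U_2\setminus\mathbb{R}^d)\cup(U_\Delta)_\mathbb{R}=U_2$ make explicit two reductions the paper leaves implicit, but the architecture is identical.
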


\begin{proof}
	``$\Rightarrow$'' Let $\Omega$ be a domain of slice regularity and let us suppose, by absurd, that there is an $I$-triple  $(\Omega,V_1,V_2)$ for some $I\in\mathcal{C}$ with $V_1\cap\mathbb{R}^d=\varnothing$.

	Let $f:\Omega\rightarrow\mathbb{R}^{2n}$ be a slice regular function. By Extension Theorem \ref{th-et}, where we take $J:=(I)$ and $U:=(V_2)$, we deduce that $f|_{V_1}$ can extend to a slice regular function $\widetilde{f}$ on a slice-open set $U_\Delta^\backsim\supset V_2\supset V_1$. Let $W_1$ be a connected component of $V_1$ in $\mathbb{C}_I^d$. And let $W_2$ be a slice-connected component of $U_\Delta^\backsim$ containing $W_1$. Since $V_2\supset W_1$ is connected in $\mathbb{C}_I^d$, we have $W_2\supset V_2$. It follows from $V_2\nsubseteq\Omega_I$ that $(W_2)_I\nsubseteq\Omega_I$ and then $W_2\nsubseteq\Omega$. Thus $\varnothing\neq W_1\subset W_2\cap\Omega$, $W_2$ is slice-connected and not contained in $\Omega$. Moreover, for any slice regular function $f:\Omega\rightarrow\mathbb{R}^{2n}$, there is a slice regular function $\widetilde{f}|_{W_2}$ on $W_2$ such that $f=\widetilde{f}|_{W_2}$ on $V_2$.

	We conclude that $(\Omega,W_1,W_2)$ is a slice-triple, and $\Omega$ is not a domain of slice regularity, a contradiction.

	``$\Leftarrow$'' Now we prove the converse, i.e. a slice-open set $\Omega$ is a domain of slice regularity if for each $I\in\mathcal{C}$ there are no open sets $V_1$ and $V_2$ in $\mathbb{C}^d_I$ such that $(\Omega,V_1,V_2)$ is an $I$-triple. So we suppose that $\Omega$ is not a domain of slice regularity. Then there are slice-open sets $W_1$, $W_2$ such that $(\Omega,W_1,W_2)$ is a slice-triple. Let $U$ be a slice connected component of $\Omega\cap W_2$ with $U\cap W_1\neq\varnothing$. By the Identity Principle \ref{tm-ip}, $(\Omega,U,W_2)$ is also a slice-triple.

	We claim that for any $I\in\mathcal{C}$, $U_I$ is a union of some connected components of $\Omega_I\cap(W_2)_I$ in $\mathbb{C}_I^d$. (This follows from the general fact that if $\Sigma$ be a slice-open set and $U$ be a slice-connected component of $\Sigma$. Then for each $I\in\mathcal{C}$, $U_I$ is a union of some connected components of $\Sigma_I$). Then for any $I\in\mathcal{C}$,
	\begin{equation*}
		\partial_I U_I\subset\partial_I((W_2)_I\cap\Omega_I)\subset\partial_I((W_2)_I)\cup\partial_I(\Omega_I).
	\end{equation*}
	Since $(W_2)_I\cap\partial_I((W_2)_I)=\varnothing$, we have
	\begin{equation}\label{eq-w2}
		(W_2)_I\cap\partial_I U_I\subset\partial_I\Omega_I.
	\end{equation}

	By Lemma \ref{pr-ubs}, $(W_2)_J\cap\partial_J U_J\neq\varnothing$ for some $J\in\mathcal{C}$. Let $p\in(W_2)_J\cap\partial_J U_J$. By \eqref{eq-w2}, we have $p\in\partial_J\Omega_J$, and then $p\notin \Omega_J$. Let $W_3$ be the connected component of $(W_2)_J$ containing $p$ in $\mathbb{C}_J^d$, and $U':=U_J\cap W_3$. Since $p\in\partial_J U_J$ and $p$ is an interior point of $W_3$ in $\mathbb{C}_J^d$, we have $p\in\partial_J U'$ and then $U'\neq\varnothing$. Hence
	\begin{enumerate}[\upshape (i)]
		\item $\varnothing\neq U'\subset W_3\cap\Omega_J$.
		\item $W_3$ is connected in $\mathbb{C}_J^d$ and not contained in $\Omega_J$ (by $p\in W_3$ and $p\notin\Omega_J$).
		\item Since $(\Omega,U,W_2)$ is a slice-triple, for any slice regular function $f$ on $\Omega$, there is a slice regular function $f':W_2\rightarrow\mathbb{R}^{2n}$ such that $f=f'$ in $U$. Since $W_3\subset W_2$ and $U'\subset U$, we have $f'|_{W_3}$ is a holomorphic function such that $f=f'|_{W_3}$ on $U'$.
	\end{enumerate}
	It implies that $(\Omega,U',W_3)$ is a $J$-triple, a contradiction and the assertion follows.
\end{proof}

\begin{prop}\label{pr-aa}
	Any axially symmetric slice-open set $\Omega$ is a domain of slice regularity, if and only if $\Omega_I$ is a domain of holomorphy in $\mathbb{C}_I^d$ for some $I\in\mathcal{C}$.
\end{prop}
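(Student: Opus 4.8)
The plan is to reduce the statement to Proposition \ref{pr-so}, which characterizes domains of slice regularity as exactly those slice-open sets admitting no $I$-triple for any $I\in\mathcal{C}$, and then to match the $I$-triple condition against the classical condition defining domains of holomorphy, slice by slice. As a preliminary observation I would record that, $\Omega$ being axially symmetric, one has $\Omega_I=\Psi_i^I(\Omega_s)$ for every $I\in\mathcal{C}$; since each $\Psi_i^I$ is a $\mathbb{C}$-isomorphism intertwining holomorphy on $\mathbb{C}_I^d$ with standard holomorphy on $\mathbb{C}^d$, the slices $\Omega_I$ are pairwise biholomorphic and $\Omega_I$ is a domain of holomorphy in $\mathbb{C}_I^d$ if and only if $\Omega_s$ is one in $\mathbb{C}^d$, for every $I$. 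Hence the clause ``for some $I$'' in the statement may be freely replaced by ``for every $I$''. (I would also note, using the Splitting Lemma \ref{lm-wsp1}, that the $\mathbb{R}^{2n}$-valued and the classical scalar notions of ``domain of holomorphy'' coincide, since holomorphic extendability can be tested on a single component.)

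For the implication ``$\Rightarrow$'', I would argue by contradiction: assuming $\Omega$ is a domain of slice regularity but $\Omega_I$ is not a domain of holomorphy for some $I$, there are open sets $V_1,V_2\subset\mathbb{C}_I^d$ with $\emptyset\neq V_1\subset V_2\cap\Omega_I$, with $V_2$ connected and not contained in $\Omega_I$, such that every holomorphic function on $\Omega_I$ extends holomorphically to $V_2$, agreeing on $V_1$. For any slice regular $f$ on $\Omega$ the restriction $f_I=f|_{\Omega_I}$ is holomorphic, hence admits such an extension; thus $(\Omega,V_1,V_2)$ is an $I$-triple, contradicting Proposition \ref{pr-so}. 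So every $\Omega_I$ is a domain of holomorphy. (This direction uses neither axial symmetry nor the Extension Theorem.)

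For the implication ``$\Leftarrow$'', assume $\Omega_I$ is a domain of holomorphy for some --- hence every --- $I\in\mathcal{C}$. By Proposition \ref{pr-so} it suffices to rule out $I$-triples, so suppose $(\Omega,V_1,V_2)$ were an $I$-triple for some $I$. The crucial step is to show that every holomorphic $g:\Omega_I\to\mathbb{R}^{2n}$ is the $I$-slice of a slice regular function on $\Omega$: for this I would invoke the Extension Theorem \ref{th-et} applied to the $1$-tuple $(I)$ and $U=(\Omega_I)$, and use Example \ref{ex-as} --- which gives $U_\Delta^\backsim=\Omega$ precisely because $\Omega$ is axially symmetric --- together with $(U_\Delta)_\mathbb{R}=\Omega_I\cap\mathbb{R}^d$, to obtain a slice regular $\widetilde{g}:\Omega\to\mathbb{R}^{2n}$ with $\widetilde{g}|_{\Omega_I}=g$. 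Feeding $\widetilde{g}$ into condition (iii) of the $I$-triple then yields a holomorphic $h:V_2\to\mathbb{R}^{2n}$ with $h=\widetilde{g}=g$ on $V_1$. Since $g$ is arbitrary and $V_1,V_2$ satisfy the required incidence properties, this shows $\Omega_I$ is not a domain of holomorphy, contradicting the hypothesis. Hence no $I$-triple exists and $\Omega$ is a domain of slice regularity.

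I expect the main obstacle to be the surjectivity step in ``$\Leftarrow$'', namely verifying that the Extension Theorem, applied to a single slice, actually reproduces $g$ on all of $\Omega_I$ and not merely off the real axis: this rests on the identity $U_\Delta^\backsim=\Omega$ for axially symmetric $\Omega$ from Example \ref{ex-as}, and one must check that the set $(\bigcup_{\ell}U_\ell\setminus\mathbb{R}^d)\cup(U_\Delta)_\mathbb{R}$ on which the Extension Theorem guarantees agreement collapses to $\Omega_I$ when $k=1$ and $U=(\Omega_I)$. A secondary, routine point is the identification of the various notions of ``domain of holomorphy in $\mathbb{C}_I^d$'' (vector-valued versus scalar, and their transport through $\Psi_i^I$); everything else is bookkeeping once Proposition \ref{pr-so} and the biholomorphic identification of the slices are in hand.
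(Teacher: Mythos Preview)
Your proof is correct and follows essentially the same approach as the paper: both directions reduce to Proposition~\ref{pr-so}, and the key step in ``$\Leftarrow$'' is the use of the Extension Theorem~\ref{th-et} together with Example~\ref{ex-as} to promote any holomorphic function on $\Omega_I$ to a slice regular function on $\Omega$. Your organization is slightly cleaner than the paper's in that you front-load the ``some $I$ $\Leftrightarrow$ every $I$'' observation (via the biholomorphisms $\Psi_i^I$) and the scalar/vector-valued equivalence for domains of holomorphy, whereas the paper handles these points inside the individual directions by working with functions of the form $F\theta_1^I$ and invoking the Splitting Lemma there.
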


\begin{proof}
	``$\Leftarrow$'' Suppose that an axially symmetric slice-open set $\Omega$ is not a domain of slice regularity. By Proposition \ref{pr-so}, there is an $I$-triple $(\Omega,V_1,V_2)$ for some $I\in\mathcal{C}$. Recall $\theta^I_1\in\mathbb{R}^{2n}$ is fixed in \eqref{eq-ti}. Using the fact that $\Omega$ is axially symmetric, and the Extension Theorem \ref{th-et} where we set $J=(I)$, $U=(\Omega)$, we deduce that any holomorphic function $f:=F\theta^I_1:\Omega_I\rightarrow \mathbb{C}_I \theta^I_1\subset\mathbb{R}^{2n}$ can {  extend} to a slice regular function $\widetilde f$ defined on $\Omega=U_\Delta^\backsim$ (see Example \ref{ex-as}),  where $F:\Omega_I\rightarrow\mathbb{C}_I$ is a holomorphic function in several complex analysis.
	
	Since $(\Omega,V_1,V_2)$ is an $I$-triple, the function $f|_{V_1}=\widetilde f|_{V_1}$ can  extend to a holomorphic function $\breve f:V_2\rightarrow\mathbb{R}^{2n}$. Note that $f|_{V_1}=F|_{V_1}\theta^I_1$. According to Splitting Lemma \ref{lm-wsp1} and Identity Principle in several complex analysis, we have $\breve f=\breve F \theta^I_1$ for some holomorphic functions $\breve F:V_2\subset\mathbb{C}_I^d\rightarrow\mathbb{C}_I$ with $\breve F=F$ on $V_1$.
	
	In summary, for any holomorphic function $F:\Omega_I\rightarrow\mathbb{C}_I$, there is a holomorphic function $\breve F:V_2\rightarrow\mathbb{C}_I$ such that $F=\breve F$ on $V_1$. Thus $(\Omega_I,V_1,V_2)$ is a triple for several complex analysis, and then $\Omega_I$ is not a domain of holomorphy in $\mathbb{C}_I^d$.
	
	``$\Rightarrow$'' On the contrary, suppose that $\Omega$ is an axially symmetric slice-open set such that {for some $I\in\mathcal{C}$ $\Omega_I$} is not a domain of holomorphy in $\mathbb{C}_I^d$. Then there are domains $V_1,V_2$ in $\mathbb{C}_I^d$ such that $(\Omega_I,V_1,V_2)$ is a triple for several complex analysis, i.e. the following \eqref{it-vn}, \eqref{it-vi} and \eqref{it-fa} hold. Then we deduce that \eqref{it-fas} also holds.
	\begin{enumerate}[\upshape (i)]
		\item\label{it-vn} $\varnothing\neq V_1\subset V_2\cap\Omega_I$.
		\item\label{it-vi} $V_2$ is connected in $\mathbb{C}_I^d$ and not contained in $\Omega_I$.
		\item\label{it-fa} For any holomorphic function $g:\Omega\rightarrow\mathbb{C}_I$, there is a holomorphic function $g':V_2\rightarrow\mathbb{C}_I$ such that $g=g'$ in $V_1$.
		\item\label{it-fas} For any slice regular function $f:\Omega\rightarrow\mathbb{R}^{2n}$, it follows from Splitting Lemma \ref{lm-wsp1}, Identity Principle in several complex analysis and \eqref{it-fa} that there is a holomorphic function $f':V_2\rightarrow\mathbb{R}^{2n}$ such that $f'=f$ on $V_1$.
	\end{enumerate}
	By \eqref{it-vn}, \eqref{it-vi} and \eqref{it-fas}, $(\Omega,V_1,V_2)$ is an $I$-triple. And then by Proposition \ref{pr-so}, $\Omega$ is not a domain of slice regularity.
\end{proof}

\begin{defn}\label{df-jj}
	$J=(J_1,...,J_k)^T\in \mathcal{C}^k$. We say that $J$ is a hyper-solution of $\mathcal{C}$. If $J$ is not a slice-solution of $\mathcal{C}$ and for each $I\in\mathcal{C}\backslash\mathcal{C}_{ker}(J)$, $(J_1,...,J_k,I)^T$ is a slice-solution of $\mathcal{C}$.
\end{defn}

\begin{example}
	Let $A$ be the algebra of quaternions $\mathbb{H}\cong\mathbb{R}^4$ or octonions $\mathbb{O}\cong\mathbb{R}^8$, and
	\begin{equation*}
		\mathcal{C}_A:=\{L_I:I\in A,\ I^2=-1\}.
	\end{equation*}
	Then for each $K\in\mathcal{C}_A$, $(K)$ is a hyper-solution of $\mathcal{C}_A$.
\end{example}

\begin{defn}
	Let $q=x+yJ_1\in\mathcal{W}^d_\mathcal{C}$, { $J=(J_1,J_2,...,J_m)^T$} be a hyper-solution of $\mathcal{C}$ and $r\in\mathbb{R}_+^d$. We call
	\begin{equation*}
		\Sigma(q,r,J):=\left\{\bigcup_{K\in\mathcal{C}} \Psi_i^K\left[P(z,r)\cap P(\overline{z},r)\right]\right\}\bigcup\left\{\bigcup_{K\in\mathcal{C}_{ker}(J)} \Psi_i^{K}\left[P(z,r)\right]\right\}
	\end{equation*}
	hyper-$\sigma$-polydisc with hyper-solution $J$, center $q$ and radius $r$. Here  $P(z,r)$ is a polydisc in $\mathbb{C}^d$ with center $z=x+yi$ and radius $r$.
\end{defn}

\begin{example}
	The $\sigma$-ball in $\mathbb{H}\ (\cong L(\mathbb{H}))$ and $\mathbb{O}\ (\cong L(\mathbb{O}))$ are hyper-$\sigma$-polydiscs.
\end{example}

\begin{rmk}\label{rm-ls}
	Let $\Sigma(p,r,J)$ be a hyper-$\sigma$-polydisc with hyper-solution $J=(J_1,...,J_m)^T\in\mathcal{C}_*^m$, center $q\in\mathbb{C}_{J_1}^d$ and radius $r\in\mathbb{R}_+^d$. Take {  $U=(\Psi_i^{J_1}\left[P(z,r)\right],...,\Psi_i^{J_m}\left[P(z,r)\right])^T$}. It is easy to check by definition that
	\begin{equation*}
		\Sigma(q,r,J)=U_\Delta^\sim.
	\end{equation*}
	Therefore if we can define a function $f:\bigcup_{\ell=1}^m \Psi_i^{J_\ell}\left[P(z,r)\right]\rightarrow\mathbb{R}^{2n}$ such that $f_{J_\ell}$, $\ell=1,...,m$ are holomorphic, then $f$ can extend to a slice regular function $\widetilde{f}:\Sigma(q,r,J)\rightarrow\mathbb{R}^{2n}$.
\end{rmk}

\begin{prop}\label{pr-hs}
	Any hyper-$\sigma$-polydisc is a domain of slice regularity.
\end{prop}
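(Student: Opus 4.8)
The plan is to reduce to one slice at a time via Proposition \ref{pr-so} and, for each slice, to exhibit a single slice regular function on all of $\Sigma:=\Sigma(q,r,J)$ which is ``completely singular'' on that slice; the convenient choice is built from a Cartan--Thullen function of a polydisc. Write $q=x+yJ_1$, $z:=x+yi$, and let $P(z,r)$ be the polydisc in $\mathbb{C}^d$ of center $z$ and radius $r$. By Remark \ref{rm-ls}, $\Sigma=U_\Delta^\backsim$ for $U:=(\Psi_i^{J_1}[P(z,r)],\dots,\Psi_i^{J_m}[P(z,r)])^T$, and one checks $U_\mathbb{C}=P(z,r)$, $U_\mathbb{C}^*=P(z,r)\cap P(\overline{z},r)$ and $(U_\Delta)_\mathbb{R}=\Psi_i^{J_\ell}[P(z,r)]\cap\mathbb{R}^d$. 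Since a hyper-solution is not a slice-solution (Definition \ref{df-jj}), \eqref{eq-udb} applies and gives
\[
\Sigma_I=\Psi_i^I(D_I),\qquad D_I=\begin{cases}P(z,r),&I\in\mathcal{C}_{ker}(J),\\ P(z,r)\cap P(\overline{z},r),&I\notin\mathcal{C}_{ker}(J).\end{cases}
\]
Here $D_I$ is a bounded convex open subset of $\mathbb{C}^d$ (a polydisc, or an intersection of two polydiscs), hence a domain of holomorphy — or empty, in which case no $I$-triple exists. Thus, by Proposition \ref{pr-so}, it is enough to rule out $I$-triples $(\Sigma,U_1,U_2)$.

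Next I would build the test function. Since $J$ is not a slice-solution, by Remark \ref{rm-lj} the subspace $L:=\{v\in\mathbb{R}^{2n}:J_1v=\dots=J_mv\}$ is nonzero (and $J_1$-invariant). Fix $0\neq\eta\in L$ and a Cartan--Thullen function $\Phi\colon P(z,r)\to\mathbb{C}$ having $P(z,r)$ as its domain of existence, and write $\Phi^{(K)}$ for the $\mathbb{C}_K$-valued transplant of $\Phi$ via $\Psi_i^K$ and $\mathbb{C}\to\mathbb{C}_K$. On the $\ell$-th slice-polydisc put $g_\ell:=\Phi^{(J_\ell)}\eta$; because $\eta\in L$ and $J_\ell|_L$ does not depend on $\ell$, all the $g_\ell$ coincide as $L$-valued maps under the identifications, in particular on $\mathbb{R}^d\cap P(z,r)$, so by the Extension Theorem \ref{th-et} (see Remark \ref{rm-ls}) they glue to a slice regular $h\colon\Sigma\to\mathbb{R}^{2n}$.

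The heart of the argument is the form of $h_I$. When $I\in\mathcal{C}_{ker}(J)$ one gets, using $I|_L=J_1|_L$ and the path-representation formula through the $J_\ell$-slices, that $h_I=\Phi^{(I)}\eta$ on $\Sigma_I=\Psi_i^I(P(z,r))$, so the first Splitting coefficient of $h_I$ along $\eta$ is $F_1=\Phi^{(I)}$, whose domain of existence is $\Sigma_I$. When $I\notin\mathcal{C}_{ker}(J)$, evaluating the path-representation formula with the doubled slice-solution $(J_1,-J_1)^T$ — whose matrix is inverted in Remark \ref{rm-lj} — writes $h_I$ as an explicit $\mathbb{C}_I$-combination of $h_{J_1}$ at the two conjugate points, and the first Splitting coefficient along $\eta$ works out to $F_1=\tfrac12\bigl(\Phi^{(I)}+\overline{\Phi^{(I)}(\overline{\,\cdot\,})}\bigr)$, i.e.\ the average of $\Phi^{(I)}$ and its reflection, whose domain of existence should be $P(z,r)\cap P(\overline{z},r)=D_I$. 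I expect the delicate step to be exactly this last claim: one must choose $\Phi$ (and $\eta$) so that the two summands do not cancel each other's singularities at the ``corner'' points $\partial P(z,r)\cap\partial P(\overline{z},r)$, and one must carry the correction factors $\Im^{-1}_{D_J}$ entering $\zeta^+$ through the computation — this bookkeeping is where the real work sits.

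With $h$ in hand the contradiction is routine. Suppose $(\Sigma,U_1,U_2)$ is an $I$-triple; condition (iii) applied to $h$ gives a holomorphic $\widetilde h\colon U_2\to\mathbb{R}^{2n}$ with $\widetilde h=h$ on $U_1$. Writing $\widetilde h=\sum_k\widetilde F_k\,\theta^I_k$ by the Splitting Lemma \ref{lm-wsp1} in an $I$-basis with $\theta^I_1=\eta$, and comparing coordinates in the real basis $\{\theta^I_k,I\theta^I_k\}$ over $U_1$, one obtains $\widetilde F_1=F_1$ on $U_1$. Hence $\widetilde F_1$ holomorphically continues $F_1|_{U_1}$ to the connected $U_2$ with $\emptyset\neq U_1\subset U_2\cap\Sigma_I$ and $U_2\not\subset\Sigma_I$ — contradicting that $\Sigma_I=\Psi_i^I(D_I)$ is the domain of existence of $F_1$. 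Therefore no $I$-triple exists, and Proposition \ref{pr-so} gives the claim.
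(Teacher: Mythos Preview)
Your construction of the test function $h$ and the treatment of the case $I\in\mathcal{C}_{ker}(J)$ are essentially the same as in the paper (the paper uses the explicit lacunary series $\sum_\ell\sum_j\bigl(\frac{q_\ell-p_\ell}{r_\ell}\bigr)^{2^j}$ in place of an abstract Cartan--Thullen $\Phi$, and your $\eta$ is the paper's $a_2$). The reduction through Proposition~\ref{pr-so} is also the same.

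The genuine gap is the case $I\notin\mathcal{C}_{ker}(J)$, precisely where you yourself flag trouble. Your plan is to compute the first splitting coefficient $F_1$ of $h_I$ along $\eta$ and show it has $D_I=P(z,r)\cap P(\overline z,r)$ as domain of existence. Two problems: first, the formula $F_1=\tfrac12(\Phi^{(I)}+\overline{\Phi^{(I)}(\overline{\,\cdot\,})})$ is only valid if $\mu:=J_1\eta$ and $I\mu$ are $\mathbb{R}$-independent from $\eta,I\eta$, which is an extra assumption on the $I$-basis that may fail (e.g.\ in low dimension). Second, and more seriously, even granting that formula you must prove the averaged function cannot be continued across $\partial D_I$; near the ``corner'' locus $\partial P(z,r)\cap\partial P(\overline z,r)$ the two summands are both singular and could in principle cancel. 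This is not a bookkeeping issue with $\Im_{D_J}^{-1}$ but an honest analytic statement that needs its own argument and is sensitive to the choice of $\Phi$.

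The paper sidesteps this entirely and, crucially, uses the defining property of a \emph{hyper}-solution, which your argument never invokes. For $K_0\notin\mathcal{C}_{ker}(J)$ the paper does not attempt to show that $h_{K_0}$ is completely singular on $\Sigma_{K_0}$. Instead it takes a point $z=x'+y'L$ in $V_2\cap\partial_{K_0}V_1'$ with $x'+y'I_1\in\Sigma_{I_1}$ but $x'-y'I_1\in\partial_{I_1}\Sigma_{I_1}$, and observes that, because $J$ is a hyper-solution and $L\notin\mathcal{C}_{ker}(J)$, the tuple $(J_1,\dots,J_m,L)$ is a slice-solution of $\mathcal{C}$. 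Applying the Extension Lemma~\ref{lm-lut} with this enlarged tuple on $[P(z,r)\cap P(\overline z,r)]\cup B(x'+y'i,r')$ then produces a holomorphic continuation of $f_{I_1}$ across $x'-y'I_1$, contradicting the already-established non-extendability on the $I_1$-slice. This transfer argument is the missing idea; it replaces your ``averaged domain of existence'' step and is what makes the hypothesis that $J$ be a hyper-solution (rather than merely not a slice-solution) actually enter the proof.
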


\begin{proof}
	Let $\Sigma(p,r,I)$ be a hyper-$\sigma$-polydisc with hyper-solution $I=(I_1,...,I_m)^T\in\mathcal{C}_*^k$, center $p=x_0+y_0I_1\in\mathbb{C}_{I_1}^d$ and radius { $r=(r_1,...,r_d)^T\in\mathbb{R}_+^d$}. By Remark \ref{rm-lj}, we can take a non-zero element
	\begin{equation}\label{eq-ab}
		a=\begin{pmatrix}
		a_1\\a_2
		\end{pmatrix}
		\in\bigcap_{\ell=1}^k\ker(1,I_\ell)\subset\left(\mathbb{R}^{2n}\right)^2.
	\end{equation}
	It is easy to check from $(1,I_1)a=0$ that $a_1,a_2\neq 0$.
	
	Consider the function $g_1:\Sigma(p,r,I)\cap\mathbb{C}_{I_1}^d\rightarrow\mathbb{R}^{2n}$ defined by
	\begin{equation*}
		g_1(q)=\left[\sum_{\ell=1}^d\sum_{n\in\mathbb{N}}\left(\frac{q_\ell-p_\ell}{r_\ell}\right)^{2^n}\right] a_2.
	\end{equation*}
	We know that from Splitting Lemma \ref{lm-wsp1} (by taking $\xi_1=a_2$) and classical complex analysis arguments, we have that $g_1$ does not extend to a holomorphic function near any point of the boundary of $\Sigma_{I_1}(p,r,I):=\Sigma(p,r,I)\cap\mathbb{C}_{I_1}$.
	
	Obviously, there are two function $F,G:D(x_0+y_0i,r)\rightarrow\mathbb{R}$ such that
	\begin{equation*}
		F(x+yi)+G(x+yi)I_1=\sum_{\ell=1}^d\sum_{n\in\mathbb{N}}\left(\frac{q_\ell-p_\ell}{r_\ell}\right)^{2^n}.
	\end{equation*}

	By \eqref{eq-ab}, we have
	\begin{equation*}
		\begin{split}
			g_1(x+yI_1)=&[F(x+yi)+G(x+yi)I_1]a_2\\
			=&F(x+yi)a_2-G(x+yi)a_1+G(x+yi)(1,I_1)a\\
			=&F(x+yi)a_2-G(x+yi)a_1.
		\end{split}
	\end{equation*}
	
	Similarly, we can define holomorphic function {$g_\ell:\left[\Sigma(p,r,I)\right]_{I_\ell}\rightarrow\mathbb{R}^{2n}$}, $\ell=1,...,m$ by
	\begin{equation*}
		g_{\ell}(x+yI_\ell)=F(x+yi)a_2-G(x+yi)a_1.
	\end{equation*}
	 Then we can take a function {$h:\bigcup_{\ell=1}^m\left[\Sigma(p,r,I)\right]_{I_\ell}\rightarrow\mathbb{R}^{2n}$} such that { $h|_{\Sigma(p,r,I)_{I_\ell}}=g_\ell$}, {$\ell=1,...,m$}. By Remark \ref{rm-ls}, $h$ can extend to a slice regular function $f:\Sigma(p,r,I)\rightarrow\mathbb{R}^{2n}$. It is easy to check that for each $K\in\mathcal{C}_{ker}(I)$,
	\begin{equation*}
		f_K (x+yK)=F(x+yi)a_2-G(x+yi)a_1.
	\end{equation*}
	Thence $f_K$ also does not extend to a holomorphic function near any point of the boundary of $\Sigma_{K}(p,r,I)$.
	
	If $\Sigma(p,r,I)$ is not a domain of slice regularity, then by Proposition \ref{pr-so}, there a $K_0$-triple $(\Sigma(p,r,I),V_1,V_2)$ for some $K_0\in\mathcal{C}$. Let $V_1'$ be a connected component of $\Sigma(p,r,I)\cap V_2$ in $\mathbb{C}_{K_0}^d$ with $V_1'\cap V_1\neq\varnothing$. Then $(\Sigma(p,r,J),V_1',V_2)$ is also a $K_0$-triple. By the same method for \eqref{eq-w2} we have
	\begin{equation*}
		V_2\bigcap\partial_{K_0} V_1'\subset\partial_{K_0}\left(\Sigma_{K_0}(p,r,I)\right).
	\end{equation*}
	
	If $K_0\in\pm\mathcal{C}_{ker}(I)$, then the holomorphic function $f_{K_0}:\Sigma_{K_0}(p,r)\rightarrow\mathbb{R}^{2n}$ can extend to a holomorphic function near a point of the boundary of $\Sigma_{K_0}(p,r,I)$, a contradiction.
	
	Otherwise, $K_0\notin \pm\mathcal{C}_{ker}(I)$. Take $z=x'+y'L\in V_2\cap\partial_{K_0} V_1'$ with $L\in\{\pm K_0\}$ such that
	\begin{equation*}
		x'+y'I_1\in\Sigma_{I_1}(p,r,I)\qquad\mbox{and}\qquad x'-y'I_1\in\partial_{I_1}(\Sigma_{I_1}(p,r,I)).
	\end{equation*}
	There is $r'\in\mathbb{R}_+$ such that
	\begin{equation*}
		B_L(x+yL,r')\subset V_2\qquad\mbox{and}\qquad B_{I_\ell}(x+yI_\ell,r')\subset\Sigma_{I_\ell}(p,r,I),\ \ell=1,...,m.
	\end{equation*}

	Since $I$ is a hyper-solution of $\mathcal{C}$ and $L\notin\mathcal{C}_{ker}(I)$, it follows by definition that $(I_1,...,I_m,L)^T\in\mathcal{C}_*^{m+1}$ is a slice-solution. Using Extension Lemma \ref{lm-lut} where we set
	\begin{equation*}
		J:=(I_1,...,I_m,L)^T\qquad\mbox{and}\qquad U:=[P(x+yi,r)\cap P(x-yi,r)]\cup B(x+yi,r'),
	\end{equation*}
	{it follows by $-I_1\in\mathcal{C}_{ker}(J)=\mathcal{C}$ that} there is a holomorphic function $f':\Psi_i^{-I_1}(U)\rightarrow\mathbb{R}^{2n}$ with $f'=f$ on $\Sigma(p,r,I)\cap\mathbb{R}^d$. Therefore we conclude that the holomorphic function $f_{I_1}:\Sigma_{I_1}(p,r)\rightarrow\mathbb{R}^{2n}$ can extend to a holomorphic function on $B_{I_1}(x-yI_1,r_1)\cup\Sigma_{I_1}(p,r,I)$ near the point $x-yI\in\partial_{I_1}(\Sigma_{I_1}(p,r,I))$, a contradiction. It implies that $\Sigma(p,r,I)$ is a domain of slice regularity.
\end{proof}

\begin{prop}\label{pr-li}
	Let $I\in\mathcal{C}$ and $\Omega$ be a domain of slice regularity. If {$\gamma\in\mathscr{P}(\mathbb{C}^d,\Omega)$} and $J\in\mathcal{C}^m_*$ with $\gamma^{J_\ell}\subset\Omega$, then $\gamma^I\subset\Omega$ for any $I\in\mathcal{C}_{ker}(J)$.
\end{prop}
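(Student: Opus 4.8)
The plan is to argue by contradiction: assuming $\gamma^I$ leaves $\Omega$, I would use the Extension Theorem to continue every slice regular function on $\Omega$ across a boundary point, thereby producing a slice-triple and contradicting, via Proposition~\ref{pr-so}, the assumption that $\Omega$ is a domain of slice regularity.

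Concretely, fix $I\in\mathcal{C}_{ker}(J)$ and suppose $\gamma^I\not\subset\Omega$. Since $\gamma^{J_1}(0)=\gamma(0)\in\Omega$, the point $\gamma^I(0)=\gamma(0)$ lies in $\Omega_I$, while $(\gamma^I)^{-1}(\mathbb{C}_I^d\setminus\Omega_I)$ is a nonempty closed subset of $[0,1]$ avoiding $0$; let $t_0>0$ be its minimum, so $\gamma^I([0,t_0))\subset\Omega$ and $\gamma^I(t_0)\notin\Omega$. If $I=J_\ell$ for some $\ell$ the conclusion is already part of the hypothesis, so we may assume $I\neq J_\ell$ for all $\ell$; when $J$ is a slice-solution we additionally choose the half-cone $\mathcal{C}_J^+$ to contain $I$, which is possible because $I\neq\pm J_\ell$. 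Since $\gamma\in\mathscr{P}(\mathbb{C}^d,\Omega)$ and $J=(J_1,\dots,J_m)^T\in[\mathcal{C}(\Omega,\gamma)]^m$, Lemma~\ref{lm-los} produces a domain $U\subset\mathbb{C}^d$ with $\gamma([0,1])\subset U$ and $U_\ell:=\Psi_i^{J_\ell}(U)\subset\Omega$ for every $\ell$; note $U_\mathbb{C}=U$, and each $f|_{U_\ell}$ is holomorphic as a restriction of the slice regular $f$. Hence the Extension Theorem~\ref{th-et} gives a slice regular $\widetilde f:U_\Delta^\backsim\rightarrow\mathbb{R}^{2n}$ that agrees with $f$ on $\bigl(\bigcup_\ell U_\ell\setminus\mathbb{R}^d\bigr)\cup(U_\Delta)_\mathbb{R}$.

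The crux is that, because $I\in\mathcal{C}_{ker}(J)$ (and $I\in\mathcal{C}_J^+$ in the slice-solution case), the descriptions \eqref{eq-udb}--\eqref{eq-udb1} of $U_\Delta^\backsim$ yield
\begin{equation*}
	U_\Delta^\backsim\cap\mathbb{C}_I^d=\Psi_i^I(U_\mathbb{C})=\Psi_i^I(U)\supseteq\gamma^I([0,1]),
\end{equation*}
so $\gamma^I(t_0)\in\Psi_i^I(U)=:V_2$. Thus $V_2$ is a connected open subset of $\mathbb{C}_I^d$ (as $U$ is a domain) meeting $\mathbb{C}_I^d\setminus\Omega_I$ at $\gamma^I(t_0)$, and $(\widetilde f)_I=\widetilde f|_{V_2}$ is holomorphic. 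To check that $\widetilde f$ continues $f$ in the sense required by an $I$-triple, let $\Omega_0$ be the slice-connected component through $\gamma(0)$ of the slice-open set $\Omega\cap U_\Delta^\backsim$; it is a slice-domain, and $\widetilde f=f$ on the nonempty set $(U\cap\mathbb{R}^d)\cap(\Omega_0)_\mathbb{R}$, which is relatively open in $(\Omega_0)_\mathbb{R}$, so the Identity Principle~\ref{tm-ip}(i) forces $\widetilde f=f$ on all of $\Omega_0$. Then $V_1:=(\Omega_0)_I$ is nonempty and open in $\mathbb{C}_I^d$, contained in $\Omega_I\cap V_2$, and $\widetilde f|_{V_2}=f$ on $V_1$; therefore $(\Omega,V_1,V_2)$ is an $I$-triple, contradicting Proposition~\ref{pr-so}. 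This forces $\gamma^I\subset\Omega$.

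The step I expect to be the main obstacle is the ``crux'': showing that the set $U_\Delta^\backsim$ supplied by the Extension Theorem really contains the whole path $\gamma^I([0,1])$ for every $I\in\mathcal{C}_{ker}(J)$. This rests on the precise form of $U_\Delta^\backsim\cap\mathbb{C}_I^d$ in \eqref{eq-udb}--\eqref{eq-udb1} and, when $J$ is a slice-solution, on choosing $\mathcal{C}_J^+$ compatibly with $I$; the case distinction between $I=\pm J_\ell$ and $I\in\mathcal{C}_{ker}(J)\setminus\{\pm J_1,\dots,\pm J_m\}$ has to be handled with care.
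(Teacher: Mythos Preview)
Your overall strategy—derive a holomorphic extension on $\Psi_i^I(U)$ and produce an $I$-triple—is exactly the paper's, but there is a genuine gap in your case analysis. You write ``we may assume $I\neq J_\ell$ for all $\ell$; when $J$ is a slice-solution we additionally choose the half-cone $\mathcal{C}_J^+$ to contain $I$, which is possible because $I\neq\pm J_\ell$.'' The passage from $I\neq J_\ell$ to $I\neq\pm J_\ell$ is unjustified: when $J$ is a slice-solution one has $\mathcal{C}_{ker}(J)=\mathcal{C}$, so $I=-J_\ell$ is a perfectly legitimate value of $I$ that must be treated. For such $I$ one cannot place $I$ in $\mathcal{C}_J^+$ (since $J_\ell\in\mathcal{C}_J^+$ already), and the formula \eqref{eq-udb1} then gives $U_\Delta^\backsim\cap\mathbb{C}_I^d=(U_\ell\setminus\mathbb{R}^d)\cup(U_\Delta)_\mathbb{R}$, which does \emph{not} contain $\gamma^{-J_\ell}([0,1])$ in general (that would require $\overline{\gamma(t)}\in U$, not just $\gamma(t)\in U$). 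So your ``crux'' fails precisely here, and the Extension Theorem~\ref{th-et} gives you no extra room on the slice $\mathbb{C}_{-J_\ell}^d$.

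The paper avoids this difficulty entirely by invoking the Extension \emph{Lemma}~\ref{lm-lut} rather than the Extension Theorem. That lemma directly produces, for \emph{every} $I\in\mathcal{C}_{ker}(J)$ (including $I=-J_\ell$), a holomorphic $g[I]:\Psi_i^I(U)\to\mathbb{R}^{2n}$ agreeing with $f$ on $U_\mathbb{R}$; a short Taylor-series comparison at one real point $w\in U_\mathbb{R}$ then shows $g[I]=f$ on a small ball $B_I(w,r)\subset\Omega_I\cap\Psi_i^I(U)$, yielding the $I$-triple $(\Omega,B_I(w,r),\Psi_i^I(U))$. This bypasses both the half-cone convention and the case $I=\pm J_\ell$, and replaces your Identity-Principle argument on a slice-connected component by a one-line local computation.
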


\begin{proof}
	We shall prove this  by contradiction. Suppose that $\gamma^I\not\subset \Omega$ for some $I\in\mathcal{C}_{ker}(J)$. By Lemma \ref{lm-los}, there is a domain $U$ in $\mathbb{C}^d$ containing $\gamma([0,1])$ such that
	\begin{equation*}
		\Psi_i^{J_\ell}(U)\subset\Omega,\qquad\ell=1,...,k.
	\end{equation*}
	Since $I\in\mathcal{C}_{ker}(J)$, it follows by Extension Lemma \ref{lm-lut} that there is a holomorphic function $g[I]:\Psi_i^I(U)\rightarrow\mathbb{R}^{2n}$ defined by
	\begin{equation*}
		g[I](x+yI)=(1,I)
		\zeta^+(J)
		f(x+yJ),\qquad\forall\ x+yi\in U,
	\end{equation*}
	such that $g[I]=f$ on $U_\mathbb{R}$. For each fixed $w\in U_\mathbb{R}$, it follows from Splitting Lemma \ref{lm-wsp1} that the Taylor series of holomorphic functions $g[I]_I$ and $f_I$ at the point $w$ are the same. Therefore there is sufficiently small $r\in\mathbb{R}_+$ such that $B_I(w,r)\subset\Psi_i^I(U)\cap\Omega_I$ and
	\begin{equation*}
		g[I]=f,\qquad\mbox{on}\qquad B_I(w,r).
	\end{equation*}
	It is easy to check that $(\Omega,B_I(w,r),\Psi_i^I(U))$ is an $I$-triple. It implies by Proposition \ref{pr-so} that $\Omega$ is not a domain of slice regularity, a contradiction. Therefore $\gamma^I\subset\Omega$ for each $I\in\mathcal{C}_{ker}(J)$.
\end{proof}

\section{Taylor series}\label{sc-ts}
When dealing with slice regular functions an important concept is that one of slice derivative.
In this section, we define this concept in the very general case of functions with values in the Euclidean space $\mathbb R^{2n}$ and then we prove that there is a Taylor series for weak slice regular functions over slice-cones.

\subsection{Slice derivatives}

In this subsection, we generalize the slice derivative to weak slice regular functions over slice-cones. We use the notations and definitions given in Section 5.

\begin{defn}
	Let $I\in\mathcal{C}$, $U$ be an open set in $\mathbb{C}_I^d$, $\ell\in\{1,...,d\}$ and $f:U\rightarrow\mathbb{R}^{2n}$ be real differentiable. The $(I,\ell)$-derivative $\partial_{I,\ell}f:U\rightarrow\mathbb{R}^{2n}$ of $f$ is defined by
	\begin{equation*}
		\partial_{I,\ell}f(x+yI):=\frac{1}{2}\left(\frac{\partial}{\partial x_\ell}-I\frac{\partial}{\partial y_\ell}\right)f_I(x+yI).
	\end{equation*}
	Moreover, if $f$ is $\alpha$-th real differentiable for $\alpha=(\alpha_1,...,\alpha_d)\in\mathbb{N}^d$, then the $(I,\alpha)$-derivative $f^{(I,\alpha)}:U\rightarrow\mathbb{R}^{2n}$ is defined by
	\begin{equation*}
		f^{(I,\alpha)}:=(\partial_{I,1})^{\alpha_1}\cdots(\partial_{I,d})^{\alpha_d} f.
	\end{equation*}
\end{defn}

\begin{prop}
	Let $I\in\mathcal{C}$, $U\in\tau\left(\mathbb{C}_I^d\right)$, and $f:U\rightarrow\mathbb{R}^{2n}$ be real differentiable. Then
	\begin{equation*}
		\partial_{I,\ell}f=\partial_{-I,\ell}f,\qquad \forall\ \ell\in\{1,...,d\}.
	\end{equation*}
	Moreover, if $f$ is a holomorphic map, $\ell=1,\ldots, d$, then
	\begin{equation}\label{eq-pie}
		\partial_{I,\ell}f=\frac{\partial}{\partial x_\ell} f,\qquad \forall\ \ell\in\{1,...,d\}.
	\end{equation}
\end{prop}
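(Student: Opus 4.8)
The plan is to reduce both statements to short computations with the real partial derivatives $\partial/\partial x_\ell$, $\partial/\partial y_\ell$ occurring in the definition of $\partial_{I,\ell}$ and in the holomorphy condition, after fixing the relevant coordinate identifications.

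For the equality $\partial_{I,\ell}f=\partial_{-I,\ell}f$, the starting point is that $\mathbb{C}_I^d=(\mathbb{R}+\mathbb{R}I)^d=(\mathbb{R}+\mathbb{R}(-I))^d=\mathbb{C}_{-I}^d$, so that $U$ is simultaneously an open subset of $\mathbb{C}_I^d$ and of $\mathbb{C}_{-I}^d$, and $f$ is real differentiable on it in either description. I would then write a point $q\in U$ in the two ways $q=x+yI=x+(-y)(-I)$ with $x,y\in\mathbb{R}^d$; thus, if $(x',y')$ denote the coordinates of $q$ relative to $-I$, one has $x'=x$ and $y'=-y$, hence $\partial/\partial x'_\ell=\partial/\partial x_\ell$ and $\partial/\partial y'_\ell=-\partial/\partial y_\ell$ for every $\ell$. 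Inserting these relations into the definition of $\partial_{-I,\ell}$ gives
\[
\partial_{-I,\ell}f=\frac{1}{2}\left(\frac{\partial}{\partial x'_\ell}-(-I)\frac{\partial}{\partial y'_\ell}\right)f=\frac{1}{2}\left(\frac{\partial}{\partial x_\ell}-I\frac{\partial}{\partial y_\ell}\right)f=\partial_{I,\ell}f,
\]
the two sign changes (one from $-I$ versus $I$, one from $\partial/\partial y'_\ell=-\partial/\partial y_\ell$) cancelling.

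For the second assertion I would use the holomorphy of $f$, which by the defining (left $I$-)holomorphy condition means $\left(\frac{\partial}{\partial x_\ell}+I\frac{\partial}{\partial y_\ell}\right)f=0$ on $U$ for every $\ell$, i.e. $I\,\partial f/\partial y_\ell=-\partial f/\partial x_\ell$; applying $I$ on the left and using $I^2=-1$ (valid since $I\in\mathfrak{C}_n$ acts as a complex structure on the target $\mathbb{R}^{2n}$) yields $\partial f/\partial y_\ell=I\,\partial f/\partial x_\ell$. Substituting this into the definition of $\partial_{I,\ell}$ and using $I^2=-1$ once more gives
\[
\partial_{I,\ell}f=\frac{1}{2}\left(\frac{\partial}{\partial x_\ell}-I\cdot I\frac{\partial}{\partial x_\ell}\right)f=\frac{1}{2}\left(\frac{\partial}{\partial x_\ell}-I^2\frac{\partial}{\partial x_\ell}\right)f=\frac{\partial}{\partial x_\ell}f,
\]
which is \eqref{eq-pie}. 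There is no genuine obstacle in either part; the only point that demands care is the sign bookkeeping in the coordinate change $(x,y)\mapsto(x,-y)$ when switching from the complex structure $I$ to $-I$, together with keeping consistent the fact that here $I$ always acts on the target $\mathbb{R}^{2n}$, where $I^2=-1$.
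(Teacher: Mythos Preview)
Your proof is correct and follows essentially the same approach as the paper: both parts are handled by direct computation, with the first using the coordinate change $(x,y)\mapsto(x,-y)$ when passing from $I$ to $-I$, and the second using the holomorphy condition to eliminate the $\partial/\partial y_\ell$-term. The only cosmetic difference is that in part~(ii) the paper subtracts the vanishing quantity $\tfrac12\bigl(\partial/\partial x_\ell+I\,\partial/\partial y_\ell\bigr)f$ from $\partial f/\partial x_\ell$, whereas you first solve for $\partial f/\partial y_\ell$ and substitute; the computations are equivalent.
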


\begin{proof}
	(i) By direct calculation, for any $\ell\in\{1,...,d\}$ and any $x+yI\in U$,
	\begin{equation*}
		\begin{split}
			\partial_{I,\ell}f(x+yI)=&\frac{1}{2}\left(\frac{\partial}{\partial x_\ell}-I\frac{\partial}{\partial y_\ell}\right)f(x+yI)
			\\=&\frac{1}{2}\left(\frac{\partial}{\partial x_\ell}-(-I)\frac{\partial}{\partial (-y_\ell)}\right)f(x+(-y)(-I))
			\\=&\partial_{-I,\ell}f(x+yI).
		\end{split}
	\end{equation*}
	
	(ii) Suppose that $f$ is holomorphic. Then for any $\ell\in\{1,...,d\}$ and $x+yI\in U$,
	\begin{equation*}
		\frac{1}{2}\left(\frac{\partial}{\partial x_\ell}+I\frac{\partial}{\partial y_\ell}\right)f(x+yI)=0.
	\end{equation*}
As a consequence we have
	\begin{equation*}
		\partial_{I,\ell}f(x+yI)
		=\frac{\partial}{\partial x_\ell}f(x+yI)
		-\frac{1}{2}\left(\frac{\partial}{\partial x_\ell}+I\frac{\partial}{\partial y_\ell}\right)f(x+yI)
		=\frac{\partial}{\partial x_\ell}f(x+yI)
	\end{equation*}
and the assertion follows.
\end{proof}
We now introduce some terminology: the real part and imaginary parts of $\mathcal{W}_{\mathcal{C}}^d$ are defined by
\begin{equation*}
	Re\left(\mathcal{W}_{\mathcal{C}}^d\right)=\mathbb{R}^d, \qquad \qquad {Im\left(\mathcal{W}_{\mathcal{C}}^d\right)=\left\{{(t_1I,...,t_dI)^T}\in\End\left(\mathbb{R}^{2n}\right)^d:t=(t_1,...,t_d)^T\in\mathbb{R}^d,\ I\in\mathcal{C}\right\}},
\end{equation*}
respectively. By its definition, it is clear that $Im\left(\mathcal{W}_{\mathcal{C}}^d\right)$ is a cone in $\left[\End\left(\mathbb{R}^{2n}\right)\right]^d$, moreover
\begin{equation*}
	\mathcal{W}_\mathcal{C}^d=Re\left(\mathcal{W}_{\mathcal{C}}^d\right)\oplus Im\left(\mathcal{W}_{\mathcal{C}}^d\right).
\end{equation*}
For $\Omega\subset\mathcal{W}_\mathcal{C}^d$ and $q\in Im(\mathcal{W}_\mathcal{C}^d)$ let us set
\begin{equation*}
	\Omega[q]:=\Omega\cap\left[Re\left(\mathcal{W}_{\mathcal{C}}^d\right)\oplus \{q\}\right].
\end{equation*}
If $q\in\mathbb{C}_I^d$ for some $I\in\mathcal{C}$, then $Re\left(\mathcal{W}_{\mathcal{C}}^d\right)\oplus \{q\}$ is a $d$-dimensional real vector subspace of $\mathbb{C}_I^d$. If $\Omega$ is slice-open then, by definition, the set
\begin{equation*}
	\Omega[q]=\Omega_I\cap\left[Re\left(\mathcal{W}_{\mathcal{C}}^d\right)\oplus \{q\}\right]
\end{equation*}
is an open set in $Re\left(\mathcal{W}_{\mathcal{C}}^d\right)\oplus \{q\}$.

\begin{defn}
	Let $\Omega\in\tau_s(\mathcal{W}_\mathcal{C}^d)$. A function $f:\Omega\rightarrow\mathbb{R}^{2n}$ is called $\alpha$-th (resp. infinitely) real-slice differentiable, if for each $q\in\Omega$, $f|_{\Omega[q]}$ is $\alpha$-th (resp. infinitely) real differentiable, where $\alpha\in\mathbb{N}^d$.
\end{defn}

Obviously, every weak slice regular functions defined on a slice-open set in $\mathcal{W}_\mathcal{C}^d$ is also infinitely real-slice differentiable.

\begin{defn} (Slice derivatives)
	Let $\Omega\in\tau_s\left(\mathcal{W}_\mathcal{C}^d\right)$, $\ell\in\{1,...,d\}$ and $f:\Omega\rightarrow\mathbb{R}^{2n}$ be a weak slice regular function. The $\ell$-slice derivative $\partial_\ell f:\Omega\rightarrow \mathbb{R}^{2n}$ of $f$ is defined by
	\begin{equation*}
		\partial_\ell f (x+w):=\frac{\partial}{\partial x_\ell} f|_{\Omega[w]}(x),
	\end{equation*}
	where $x=(x_1,...,x_d)\in\mathbb{R}^{d}$ and $w\in Im\left(\mathcal{W}_\mathcal{C}^d\right)$.
	Let $\alpha=(\alpha_1,...,\alpha_d)\in\mathbb{N}^d$. The $\alpha$-th slice derivative $f^{(\alpha)}:\Omega\rightarrow \mathbb{R}^{2n}$ is defined by
	\begin{equation*}
		f^{(\alpha)}:=(\partial_{1})^{\alpha_1}\cdots(\partial_{d})^{\alpha_d} f.
	\end{equation*}
\end{defn}

\begin{prop}
	Let $\Omega\subset\mathcal{W}_\mathcal{C}^d$, $f:\Omega\rightarrow\mathbb{R}^{2n}$ be weak slice regular and $\alpha\in\mathbb{N}^d$. Then $f^{(\alpha)}$ is weak slice regular and
	\begin{equation}\label{eq-lfr}
		\left(f^{(\alpha)}\right)_I=\left(f_I\right)^{(I,\alpha)},\qquad\forall\ I\in\mathcal{C}.
	\end{equation}	
\end{prop}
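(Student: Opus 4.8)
The plan is to reduce everything to the first-order case $\alpha=e_\ell$ (where $e_\ell\in\mathbb{N}^d$ is the $\ell$-th standard basis vector) and then to induct on $|\alpha|$.

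\emph{Step 1: the slice identity for $\partial_\ell f$.} Fix $I\in\mathcal{C}$ and $\ell\in\{1,\dots,d\}$, and take $q=x_0+y_0I\in\Omega_I$. Let $w\in Im(\mathcal{W}_\mathcal{C}^d)$ be the imaginary part of $q$. Then $\Omega[w]$ is an open subset of the real $d$-dimensional affine subspace $Re(\mathcal{W}_\mathcal{C}^d)\oplus\{w\}$ of $\mathbb{C}_I^d$, and in the coordinate $x\mapsto x+w$ one has $f|_{\Omega[w]}(x)=f_I(x+y_0I)$ with $y_0$ frozen. Hence, directly from the definition of the $\ell$-slice derivative, $(\partial_\ell f)(q)=\tfrac{\partial}{\partial x_\ell}f_I(q)$, i.e. $(\partial_\ell f)_I=\tfrac{\partial}{\partial x_\ell}f_I$. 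Since $f$ is weak slice regular, $f_I$ is $I$-holomorphic, so \eqref{eq-pie} gives $\tfrac{\partial}{\partial x_\ell}f_I=\partial_{I,\ell}f_I$; therefore
$$(\partial_\ell f)_I=\partial_{I,\ell}f_I\qquad\text{for every }I\in\mathcal{C}.$$

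\emph{Step 2: $\partial_\ell f$ is again weak slice regular.} Fix $I\in\mathcal{C}$ and an $I$-basis $\{\xi_1,\dots,\xi_n\}$. By the Splitting Lemma \ref{lm-wsp1} there are holomorphic $F_1,\dots,F_n:\Omega_I\subset\mathbb{C}_I^d\to\mathbb{C}_I$ with $f_I=\sum_{j=1}^n F_j\xi_j$. Each $F_j$, being holomorphic in $d$ complex variables, is $C^\infty$, and its real partial $\tfrac{\partial}{\partial x_\ell}F_j$ coincides with the complex derivative and so is again holomorphic. By Step 1, $(\partial_\ell f)_I=\tfrac{\partial}{\partial x_\ell}f_I=\sum_{j=1}^n\big(\tfrac{\partial}{\partial x_\ell}F_j\big)\xi_j$ satisfies the splitting criterion again, hence is $I$-holomorphic; since $I$ was arbitrary, $\partial_\ell f$ is weak slice regular. (Equivalently: $f_I$ is smooth, so $\tfrac{\partial}{\partial x_\ell}$ commutes with each Cauchy--Riemann operator $\tfrac12(\tfrac{\partial}{\partial x_m}+I\tfrac{\partial}{\partial y_m})$, whence $(\partial_\ell f)_I$ is again annihilated by all of them.) Note in passing that $\partial_{I,\ell}$ thus sends $I$-holomorphic functions to $I$-holomorphic ones, and on smooth functions the operators $\partial_{I,1},\dots,\partial_{I,d}$ pairwise commute since they all equal the corresponding $\tfrac{\partial}{\partial x_\ell}$.

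\emph{Step 3: induction on $|\alpha|$.} For $\alpha=0$ there is nothing to prove. If $|\alpha|\ge1$, let $\ell$ be the least index with $\alpha_\ell\ge1$; by the definition of the $\alpha$-th slice derivative, $f^{(\alpha)}=\partial_\ell\big(f^{(\alpha-e_\ell)}\big)$. By the inductive hypothesis, $g:=f^{(\alpha-e_\ell)}$ is weak slice regular with $g_I=(f_I)^{(I,\alpha-e_\ell)}$. Applying Steps 1 and 2 to $g$ shows that $f^{(\alpha)}=\partial_\ell g$ is weak slice regular and $(f^{(\alpha)})_I=\partial_{I,\ell}g_I=\partial_{I,\ell}(f_I)^{(I,\alpha-e_\ell)}=(f_I)^{(I,\alpha)}$, where the last equality is immediate from the definition of the $(I,\cdot)$-derivative for this choice of $\ell$ (and if needed is secured by the commutativity noted in Step 2). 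This is exactly \eqref{eq-lfr}, completing the proof.

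The only genuinely delicate point is the bookkeeping in Step 1: one must carefully match the purely ``horizontal'' partial derivative defining $\partial_\ell f$ with the $(I,\ell)$-derivative on the slice $\mathbb{C}_I^d$, for which \eqref{eq-pie}---valid precisely because $f_I$ is holomorphic---is the essential ingredient. Everything else reduces to the Splitting Lemma together with the smoothness and commutation properties of holomorphic functions of several complex variables.
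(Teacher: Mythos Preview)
Your proof is correct and follows essentially the same approach as the paper: both argue by induction on the multi-index, identify $(\partial_\ell f)_I$ with $\tfrac{\partial}{\partial x_\ell}f_I$ via the definition, and invoke \eqref{eq-pie} to pass from the real partial to the $(I,\ell)$-derivative on holomorphic restrictions. The only cosmetic difference is that in Step~2 you appeal to the Splitting Lemma~\ref{lm-wsp1} to certify holomorphicity of $(\partial_\ell f)_I$, whereas the paper simply observes that $\tfrac{\partial}{\partial x_\ell}$ of a holomorphic function is holomorphic; and you are slightly more explicit about the commutativity needed in the induction.
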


\begin{proof}
The proof is by induction. Since $f^{(0)}=f$ is weak slice regular, then \eqref{eq-lfr} holds when $\alpha=0$. Suppose that $f^{(\beta)}$ is weak slice regular and \eqref{eq-lfr} holds when $\alpha=\beta$. We shall prove that for each $\ell\in\{1,...,d\}$, $f^{(\beta+\theta_\ell)}$ is weak slice regular and \eqref{eq-lfr} holds when $\alpha=\beta+\theta_\ell$, where
	\begin{equation*}
		\theta_\ell=\left(0_{1\times(\ell-1)},1,0,...,0\right).
	\end{equation*}
	
	Let $\ell\in\{1,...,d\}$ and $I\in\mathcal{C}$. Since $f^{(\beta)}$ is weak slice regular, $\left(f^{(\beta)}\right)_I$ is holomorphic. By induction hypothesis and recalling the notation \eqref{eq-pie}, the function
	\begin{equation*}
		\left(f^{(\beta+\theta_\ell)}\right)_I
		=\left(\frac{\partial}{\partial x_\ell}f^{(\beta)}\right)
		=\left(\frac{\partial}{\partial x_\ell}(f_I)^{
			(I,\beta)}\right)
		=(f_I)^{(I,\beta+\theta_\ell)}
	\end{equation*}
	is holomorphic. Since the choice of $I$ is arbitrary, it follows that $f^{(\beta+\theta_\ell)}$ is weak slice regular and \eqref{eq-lfr} holds when $\alpha=\beta+\theta_\ell$.
	
	By induction, for any $\alpha\in\mathbb{N}^n$, $f^{(\alpha)}$ is weak slice regular and \eqref{eq-lfr} holds.
\end{proof}

\subsection{Taylor series}

In this subsection, we shall prove a Taylor expansion on $\sigma$-polydiscs for weak slice regular functions. To this end, we consider $I\in\mathcal{C}$ and $r=(r_1,...,r_d)\in\mathbb{R}_+^d=(0,+\infty]^d$. For any $z=(z_1,...,z_d)\in\mathbb{C}_I^d$, we denote the polydisc with center $z$ and radius $r$ by
\begin{equation*}
	P_I(z,r):=\left\{w=(w_1,...,w_d)\in\mathbb{C}_I^d:|z_\imath-w_\imath|\le r_\imath,\ \imath=1,...,n\right\},
\end{equation*}
and we set
\begin{equation*}
	\widetilde{P_I}(z,r):=\{x+yJ\in\mathcal{W}_\mathcal{C}^d:J\in\mathcal{C},\ \mbox{and}\ x\pm yI\in P_I(z,r)\}\bigcup P_I(z,r).
\end{equation*}
It is easy to check that if $z\in\mathbb{C}_I^d\cap\mathbb{C}_J^d$, for some $I,J\in\mathcal{C}$, then
\begin{equation*}
	\widetilde{P_I}(z,r)=\widetilde{P_J}(z,r).
\end{equation*}
Hence, we can write $\widetilde{P}(z,r)$ instead of $\widetilde{P_I}(z,r)$, without ambiguity. We call $\widetilde{P}(z,r)$ the $\sigma$-polydisc with center $z$ and radius $r$.

\begin{prop}\label{pr-ts}
	Let $I\in\mathcal{C}$, $U$ be an open set in $\mathbb{C}_I^d$ and $f:U\rightarrow\mathbb{R}^{2d}$ be holomorphic. Then for any $z_0\in U$ and $r\in\mathbb{R}_+^d$ with $P_I(z_0,r)\subset U$, we have
	\begin{equation*}
		f(z)=\sum_{\alpha\in\mathbb{N}^d}(z-z_0)^\alpha f^{(I,\alpha)}(z_0),\qquad\forall\ z\in D_I(z_0,r).
	\end{equation*}
\end{prop}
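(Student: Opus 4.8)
The plan is to deduce this from the classical theory of holomorphic functions of several complex variables, by transporting $f$ through a fixed complex-linear identification. Equip $\mathbb{R}^{2n}$ with the complex structure $L_I$; then $(\mathbb{R}^{2n},L_I)$ is a complex vector space, $\mathbb{C}$-linearly isomorphic to $\mathbb{C}^n$ via $\sum_{\ell=1}^n(a_\ell\theta^I_\ell+b_\ell I\theta^I_\ell)\mapsto(a_\ell+ib_\ell)_{\ell=1}^n$, where $\theta^I=\{\theta^I_1,\dots,\theta^I_n\}$ is the fixed $I$-basis from \eqref{eq-ti}. Under this isomorphism, together with $\mathbb{C}_I^d\cong\mathbb{C}^d$, $x+yI\mapsto x+yi$, the hypothesis that $f$ is real differentiable and satisfies $\tfrac12(\partial_{x_\ell}+I\partial_{y_\ell})f=0$ for $\ell=1,\dots,d$ becomes exactly the statement that the associated map $U\to\mathbb{C}^n$ is real differentiable and satisfies the Cauchy--Riemann equations in each variable, hence is holomorphic in the usual sense on $U$. (Equivalently one may proceed componentwise, writing $f=\sum_\ell F_\ell\theta^I_\ell$ with $F_\ell\colon U\to\mathbb{C}_I$ holomorphic, as in the Splitting Lemma \ref{lm-wsp1}.)

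Next I would apply the iterated Cauchy integral formula on the closed polydisc $P_I(z_0,r)\subset U$. Expanding each factor of the Cauchy kernel as a geometric series and interchanging summation with integration yields, for every $z$ in the open polydisc $D_I(z_0,r)$, an expansion $f(z)=\sum_{\alpha\in\mathbb{N}^d}(z-z_0)^\alpha c_\alpha$ that converges absolutely and uniformly on compact subsets of $D_I(z_0,r)$; here $c_\alpha\in\mathbb{R}^{2n}$ is a Cauchy integral of $f$ over the distinguished boundary of $P_I(z_0,r)$, and $(z-z_0)^\alpha=\prod_{\imath=1}^d(z_\imath-z_{0,\imath})^{\alpha_\imath}\in\mathbb{C}_I$ multiplies $c_\alpha$ through the $\mathbb{C}_I$-module structure of $\mathbb{R}^{2n}$ induced by $L_I$. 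This is the standard several complex variables argument read through the identifications above.

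It then remains to identify the coefficients with slice derivatives. Since the termwise-differentiated series still converges locally uniformly on $D_I(z_0,r)$, one may differentiate it term by term; applying $(\partial/\partial x_1)^{\alpha_1}\cdots(\partial/\partial x_d)^{\alpha_d}$ and evaluating at $z=z_0$ isolates the $\alpha$-th term and gives $(\partial/\partial x)^\alpha f(z_0)=\alpha!\,c_\alpha$, with $\alpha!:=\alpha_1!\cdots\alpha_d!$. By the preceding proposition, for a holomorphic $f$ one has $\partial_{I,\ell}f=\partial f/\partial x_\ell$ (see \eqref{eq-pie}), hence $f^{(I,\alpha)}(z_0)=(\partial/\partial x)^\alpha f(z_0)=\alpha!\,c_\alpha$, i.e. $c_\alpha=\tfrac{1}{\alpha!}\,f^{(I,\alpha)}(z_0)$, which is the asserted Taylor expansion on $D_I(z_0,r)$.

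I do not expect a genuine obstacle here: the mathematical content is classical, and the only points requiring care are bookkeeping ones — namely, that the scalar–vector products $(z-z_0)^\alpha f^{(I,\alpha)}(z_0)$ are well defined and compatible with the $\mathbb{C}_I$-linear identification $\mathbb{R}^{2n}\cong\mathbb{C}^n$, and that the notion of holomorphy used in this paper coincides with classical joint holomorphy of a $\mathbb{C}^n$-valued map of $d$ complex variables (so that, given the real differentiability already assumed, Osgood's lemma lets us pass from separate to joint holomorphy and the polydisc Cauchy formula applies). Once these identifications are fixed, the argument is the verbatim several complex variables proof of Taylor's theorem.
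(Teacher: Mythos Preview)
Your approach is exactly the one the paper indicates: the paper's own proof is the single sentence ``The statement can be proved using the Splitting Lemma \ref{lm-wsp1} and the Taylor expansion in several complex variables,'' and you have simply unpacked that sentence in detail (the complex-linear identification via the $I$-basis is precisely the content of the Splitting Lemma, and the rest is the classical polydisc Taylor argument). One caveat: your computation correctly yields $c_\alpha=\tfrac{1}{\alpha!}f^{(I,\alpha)}(z_0)$, so the displayed formula in the proposition is missing a factor of $\tfrac{1}{\alpha!}$; this appears to be a typo in the paper's statement rather than a flaw in your argument.
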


\begin{proof}
The statement can be proved using the Splitting Lemma \ref{lm-wsp1} and the Taylor expansion in several complex variables.
\end{proof}
Let us define a (partial) order relation on $\mathbb R^d$ as follows: given $\alpha=(\alpha_1,...,\alpha_d)$ and $\beta=(\beta_1,...,\beta_d)$ in $\mathbb{R}^d$ we say that $\alpha<\beta$ if
\begin{equation*}
	\alpha_\ell<\beta_\ell,\qquad\forall\ \ell=1,...,d.
\end{equation*}

Let $\alpha\in\mathbb{N}^d$ and $p\in\mathcal{W}_\mathcal{C}^d$ and let us define the map
\begin{equation*}
	\begin{split}
		(id-p)^{*\alpha}:\quad\mathcal{W}_{\mathcal{C}}^d\ &\xlongrightarrow[\hskip1cm]{}\ \End\left(\mathbb{R}^{2n}\right),
		\\ q\ &\shortmid\!\xlongrightarrow[\hskip1cm]{}\ \sum_{0\le\beta\le\alpha}\left[
		\begin{pmatrix}
			\alpha\\\beta
		\end{pmatrix}L_q^\beta L_p^{\alpha-\beta}\right],
	\end{split}
\end{equation*}
where
\begin{equation*}
	\begin{pmatrix}
		\alpha\\\beta
	\end{pmatrix}:=\begin{pmatrix}
		\alpha_1\\\beta_1
	\end{pmatrix}\cdots\begin{pmatrix}
		\alpha_d\\\beta_d
	\end{pmatrix}
\end{equation*}
is the binomial coefficient and
\begin{equation*}	L_q^\beta:=\prod_{\ell=1}^d\left(L_{q_\ell}\right)^{\beta_\ell}=\left(L_{q_1}\right)^{\beta_1}\cdots \left(L_{q_d}\right)^{\beta_d}.
\end{equation*}
For simplicity,  below we shall write  $(q-q_0)^{*\alpha}$ instead of $(id-p)^{*\alpha}(q)$ and $(q-p)^{*\alpha}a$ instead of $[(q-p)^{*\alpha}](a)$. Moreover note that  $|\cdot|$ denotes a real linear norm in $\mathbb{R}^{2n}$, i.e. $|\lambda a|=|\lambda||a|$ for each $\lambda\in\mathbb{R}$ and $a\in\mathbb{R}^{2n}$.

\begin{prop}\label{pr-lpn}
	Let $p\in\mathcal{W}_\mathcal{C}^d$ and $a\in\mathbb{R}^{2n}$. The function defined by
	\begin{equation*}
		\begin{split}
			f:\quad\mathcal{W}_\mathcal{C}^d\ &\xlongrightarrow[\hskip1cm]{}\ \mathbb{R}^{2n},
			\\ q\ &\shortmid\!\xlongrightarrow[\hskip1cm]{}\ (q-p)^{*\alpha}a,
		\end{split}
	\end{equation*}
	is a weak slice regular function.
	Moreover, if $p\in\mathbb{C}_I^d$ and $q=x_0+y_0J\in\mathbb{C}_J^d$ for some $I,J\in\mathcal{C}$, then
	\begin{equation}\label{eq-pq}
		\left|(q-p)^{*\alpha}a\right|\le (|J||I|+1) \max_{r=x_0\pm y_0I}\big|(r-p)^{*\alpha}a\big|.
	\end{equation}
\end{prop}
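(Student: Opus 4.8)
The plan is to treat the two claims separately: weak slice regularity is checked one slice at a time by a direct Cauchy--Riemann computation, while the estimate \eqref{eq-pq} is obtained by recognising $q\mapsto(q-p)^{*\alpha}a$ as a slice function whose stem function is even, resp.\ odd, in the imaginary variable, which turns \eqref{eq-pq} into the familiar representation of a slice function on $\mathbb{C}_J^d$ through its values on a slice $\mathbb{C}_I^d$ containing $p$.

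For weak slice regularity, fix $I\in\mathcal{C}$ and, for $q=x+yI\in\mathbb{C}_I^d$, expand
\begin{equation*}
	(q-p)^{*\alpha}a=\sum_{0\le\beta\le\alpha}\binom{\alpha}{\beta}\,L_q^{\beta}\,b_\beta,\qquad b_\beta:=L_p^{\alpha-\beta}a\in\mathbb{R}^{2n},
\end{equation*}
where, restricted to $\mathbb{C}_I^d$, $L_q^{\beta}=\prod_{\ell=1}^{d}(x_\ell+y_\ell I)^{\beta_\ell}$ takes values in the commutative slice $\mathbb{C}_I$. Since $x_\ell+y_\ell I$ commutes with $I$ and $I^2=-1$, one has $\bigl(\tfrac{\partial}{\partial x_\ell}+I\tfrac{\partial}{\partial y_\ell}\bigr)(x_\ell+y_\ell I)^{\beta_\ell}=\beta_\ell(x_\ell+y_\ell I)^{\beta_\ell-1}(1+I^2)=0$, so each summand $L_q^{\beta}b_\beta$ is $I$-holomorphic; hence the finite sum $f|_{\mathbb{C}_I^d}$ is holomorphic, and as $I\in\mathcal{C}$ is arbitrary, $q\mapsto(q-p)^{*\alpha}a$ is weak slice regular.

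For \eqref{eq-pq}, I would expand each factor $(x_\ell+y_\ell I)^{\beta_\ell}$ by the binomial theorem and collect the powers of $I$ modulo $4$, getting $(x_\ell+y_\ell I)^{\beta_\ell}=P_\ell(x_\ell,y_\ell)+Q_\ell(x_\ell,y_\ell)I$ with $P_\ell,Q_\ell$ real polynomials not depending on $I$, $P_\ell$ even and $Q_\ell$ odd in $y_\ell$; multiplying these factors out inside $\mathbb{C}_I$ and summing over $\beta$ one obtains $(q-p)^{*\alpha}a=F_1(x,y)+I\,F_2(x,y)$ for $q=x+yI\in\mathbb{C}_I^d$, where $F_1,F_2\colon\mathbb{R}^d\times\mathbb{R}^d\to\mathbb{R}^{2n}$ are independent of $I$, with $F_1$ even and $F_2$ odd in $y$ (a product of terms of the form $P+QI$ with $P$ even and $Q$ odd is again of this form). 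Equivalently, once weak slice regularity is in hand the slice-function structure also follows from Proposition~\ref{pr-lo}, $\mathcal{W}_\mathcal{C}^d$ being an axially symmetric slice-domain. Now take $p\in\mathbb{C}_I^d$, $q=x_0+y_0J$, put $r^{\pm}:=x_0\pm y_0 I$, $u:=F_1(x_0,y_0)$, $v:=F_2(x_0,y_0)$, and use the parity to write $(r^{+}-p)^{*\alpha}a=u+Iv$, $(r^{-}-p)^{*\alpha}a=u-Iv$, $(q-p)^{*\alpha}a=u+Jv$. Solving the first two relations for $u$ and $v$ (using $I^{-1}=-I$) and substituting gives
\begin{equation*}
	(q-p)^{*\alpha}a=\tfrac12\bigl[(r^{+}-p)^{*\alpha}a+(r^{-}-p)^{*\alpha}a\bigr]-\tfrac12\,(JI)\bigl[(r^{+}-p)^{*\alpha}a-(r^{-}-p)^{*\alpha}a\bigr],
\end{equation*}
and then $|JI|\le|J||I|$, $|1|=1$ for the operator norm subordinate to $|\cdot|$, and the triangle inequality yield $\bigl|(q-p)^{*\alpha}a\bigr|\le\tfrac12(1+|J||I|)\bigl(|(r^{+}-p)^{*\alpha}a|+|(r^{-}-p)^{*\alpha}a|\bigr)\le(|J||I|+1)\max_{r=x_0\pm y_0I}\bigl|(r-p)^{*\alpha}a\bigr|$, which is \eqref{eq-pq}. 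The one step needing genuine care is the parity bookkeeping — that the real part of $L_q^{\beta}$ is even and its $I$-part odd in $y$ — together with checking in the last inequality that the operator-norm constant collapses precisely to $|J||I|+1$; everything else is routine.
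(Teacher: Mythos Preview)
Your proof is correct and follows essentially the same route as the paper. For part (i) both you and the paper compute the Cauchy--Riemann operator term by term on $L_q^\beta b_\beta$; for part (ii) the paper simply invokes Proposition~\ref{pr-lo} to get the slice structure $f(x+yJ)=s+JIr$ with $s=\tfrac12[f(x+yI)+f(x-yI)]$ and $r=-\tfrac12[f(x+yI)-f(x-yI)]$, then estimates $|s|+|JIr|\le(1+|J||I|)M$---exactly your representation and bound, just written with $s,r$ instead of your $u,v$. Your explicit binomial/parity derivation of $F_1,F_2$ is a bit more than the paper does (and you note the Proposition~\ref{pr-lo} shortcut yourself), but it leads to the same formula and the same estimate.
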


\begin{proof}
	(i) Let $K\in\mathcal{C}$ and $z=x+yK\in\mathbb{C}_K^d$. For each $\beta\in\mathbb{N}^d$, $b\in\mathbb{R}^{2n}$ and $\ell=\{1,...,d\}$, we have
	\begin{equation*}
		\begin{split}
			&\frac{1}{2}\left(\frac{\partial}{\partial x_\ell}+K\frac{\partial}{\partial y_\ell}\right)(L_z^\beta b)
			\\=&\left[\frac{1}{2}\left(\frac{\partial}{\partial x_\ell}+L_K\frac{\partial}{\partial y_\ell}\right)(x_\ell+y_\ell L_k)^{\beta_\ell}\right]\left(\prod_{\jmath\neq\ell}(L_{z_\jmath})^{\beta_\jmath}b\right)=0.
		\end{split}
	\end{equation*}
	It implies that, for each $\ell\in\{1,...,d\}$,
	\begin{equation*}
		\begin{split}
			&\frac{1}{2}\left(\frac{\partial}{\partial x_\ell}+K\frac{\partial}{\partial y_\ell}\right)f_K(z)
			\\=&\frac{1}{2}\left(\frac{\partial}{\partial x_\ell}+K\frac{\partial}{\partial y_\ell}\right)(z-p)^{*\alpha}a
			\\=&\sum_{0\le\beta\le\alpha}\left[\frac{1}{2}\left(\frac{\partial}{\partial x_\ell}+L_K\frac{\partial}{\partial y_\ell}\right)
			\begin{pmatrix}
				\alpha\\\beta
			\end{pmatrix}L_z^\beta \left(L_p^{\alpha-\beta}a\right)\right]=0.
		\end{split}
	\end{equation*}
	Since the choice of $z$ and $K$ is arbitrary, $f_K$ is holomorphic and then $f$ is weak slice regular.
	
	(ii) Set $M:=\max_{r=x_0\pm y_0I}\big|(r-p)^{*\alpha}a\big|$. Since $f$ is weak slice regular and $\mathcal{W}_\mathcal{C}^d$ is axially symmetric, it follows from Proposition \ref{pr-lo} that $f$ is slice. Hence
	\begin{equation*}
		(q-p)^{*\alpha}a=f(q)=f(x+yJ)=s+JIr,
	\end{equation*}
	where
	{\begin{equation*}
		\begin{cases}
			s=\frac{1}{2}\left[f(x+yI)+f(x-yI)\right],
			\\r=-\frac{1}{2}[f(x+yI)-f(x-yI)].
		\end{cases}
	\end{equation*}}
	It is easy to check that $|r|,|s|\le M$ and
	\begin{equation*}
		|JIr|\le |J||Ir|\le|J||I||r|\le |J||I| M,
	\end{equation*}
	so \eqref{eq-pq} holds.
\end{proof}

\begin{thm} (Taylor series)\label{tm-ot}
	Let $\Omega\in\tau_s\left(\mathcal{W}_\mathcal{C}^d\right)$ and $f:\Omega\rightarrow\mathbb{R}^{2n}$ be weak slice regular. Then for each  $I\in\mathbb{S}$, $q_0\in\Omega_I$ and $r\in\mathbb{R}_+^d$ with $P_I(q_0,r)\subset\Omega$, we have
	\begin{equation}\label{eq-fqsa}
		f(q)=\sum_{\alpha\in\mathbb{N}^d}(q-q_0)^{*\alpha} f^{(\alpha)}(q_0),
	\end{equation}
 for each $q$ in the slice-connected component of  $\widetilde{P}(q_0,r)\cap\Omega$ including $q_0$.
\end{thm}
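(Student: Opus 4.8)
The plan is to fix $I\in\mathcal C$ with $q_0\in\mathbb C_I^d$, to prove \eqref{eq-fqsa} first on the single slice $\mathbb C_I^d$ by reducing it to the classical Taylor expansion in several complex variables, and then to propagate it to the whole $\sigma$-polydisc $\widetilde P(q_0,r)$ by the Identity Principle, once the series on the right of \eqref{eq-fqsa} has been shown to define a weak slice regular function there.

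For the slice reduction I would use two ``dictionary'' identities. First, on $\mathbb C_I^d$ the variable and the center $q_0$ lie in the commutative slice algebra $\mathbb C_I$, so all left multiplications occurring in $(id-q_0)^{*\alpha}$ commute and the $*$-monomial $q\mapsto(q-q_0)^{*\alpha}a$ collapses there to the ordinary monomial $z\mapsto(z-q_0)^{\alpha}a$. Second, by \eqref{eq-lfr} together with \eqref{eq-pie} the coefficient $f^{(\alpha)}(q_0)$ equals $(f_I)^{(I,\alpha)}(q_0)$, i.e.\ exactly the coefficient in the holomorphic Taylor expansion of $f_I$ at $q_0$. Since $f_I$ is holomorphic on $\Omega_I$ and $\widetilde P(q_0,r)_I$ lies inside a polydisc about $q_0$ on which $f_I$ is holomorphic, the Taylor proposition for holomorphic maps stated above gives
\[
f_I(z)=\sum_{\alpha\in\mathbb N^d}(z-q_0)^{\alpha}f^{(\alpha)}(q_0),\qquad z\in\widetilde P(q_0,r)_I,
\]
with absolute and locally uniform convergence.

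Then I would show that $g(q):=\sum_{\alpha}(q-q_0)^{*\alpha}f^{(\alpha)}(q_0)$ converges absolutely and locally uniformly on all of $\widetilde P(q_0,r)$ and is weak slice regular there. The key is the estimate \eqref{eq-pq}: for $q=x+yJ\in\widetilde P(q_0,r)$ one has $x\pm yI\in\widetilde P(q_0,r)_I$ directly from the definition of $\widetilde P$, and
\[
\bigl|(q-q_0)^{*\alpha}f^{(\alpha)}(q_0)\bigr|\le(|J|\,|I|+1)\max_{s\in\{x+yI,\,x-yI\}}\bigl|(s-q_0)^{\alpha}f^{(\alpha)}(q_0)\bigr|,
\]
so the convergence on $\widetilde P(q_0,r)_I$ from the previous step transfers, the factor $|J|\,|I|+1$ being locally bounded. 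Each partial sum is weak slice regular by Proposition~\ref{pr-lpn}, and a locally uniform limit of functions holomorphic on every slice $\mathbb C_K^d$ is again holomorphic on every slice (reduce to $\mathbb C_K$-valued limits by the Splitting Lemma~\ref{lm-wsp1}), so $g$ is weak slice regular on $\widetilde P(q_0,r)$. Finally $\widetilde P(q_0,r)$ is an axially symmetric slice-domain (its slices are convex, hence connected, and, when non-empty, it meets $\mathbb R^d$), so $f$ restricted to it is slice by Proposition~\ref{pr-lo}; since $g$ and $f$ agree on the open set $\widetilde P(q_0,r)_I$ of $\mathbb C_I^d$ by the displayed slice identity, Theorem~\ref{tm-ip} yields $g=f$ on $\widetilde P(q_0,r)$, which is \eqref{eq-fqsa}.

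The step I expect to be the main obstacle is the middle one, passing from the central slice to the whole $\sigma$-polydisc: one has to guarantee that the formally written series converges and remains weak slice regular off $\mathbb C_I^d$, and this is precisely what Proposition~\ref{pr-lpn} is for --- both the slice regularity of the $*$-monomials and the growth bound \eqref{eq-pq} that carries the one-slice Taylor estimate to all other slices. A secondary point that must be checked honestly is that $\widetilde P(q_0,r)_I$ does sit inside a polydisc about $q_0$ contained in $\Omega_I$, so that the holomorphic Taylor proposition applies with the stated domain of validity.
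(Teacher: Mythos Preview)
Your proposal is correct and follows essentially the same route as the paper: establish the identity on the slice $\mathbb C_I^d$ via the holomorphic Taylor expansion of $f_I$, use the estimate \eqref{eq-pq} from Proposition~\ref{pr-lpn} to transfer absolute convergence of the series to every other slice, observe that the resulting sum $g$ is weak slice regular, and conclude $f=g$ on $\widetilde P(q_0,r)$ by the Identity Principle. You are in fact slightly more careful than the paper on two points: you justify the weak slice regularity of $g$ as a locally uniform limit via the Splitting Lemma (the paper simply invokes Proposition~\ref{pr-lpn}), and you verify that $\widetilde P(q_0,r)$ is a slice-domain meeting $\mathbb R^d$ so that Theorem~\ref{tm-ip} applies; the appeal to Proposition~\ref{pr-lo} is harmless but unnecessary, since the Identity Principle only needs weak slice regularity.
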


\begin{proof}
	Let $q=x+yJ\in\widetilde{P}(q_0,r)$. If $J=I$, then \eqref{eq-fqsa} holds by Proposition \ref{pr-ts}. Otherwise, $J\neq I$. By definition, $x\pm yI\in P_I(q_0,r)$. Then according to \eqref{eq-pq}, for each $\alpha\in\mathbb{N}^d$,
	\begin{equation}\label{eq-faq0}
		\begin{split}
			&\left|(q-q_0)^{*\alpha} f^{(\alpha)}(q_0)\right|
			\\\le&(|J||I|+1)\left(\left|(q_I-q_0)^{*\alpha} f^{(\alpha)}(q_0)\right|+\left|(q_{-I}-q_0)^{*\alpha} f^{(\alpha)}(q_0)\right|\right)
			\\=&(|J||I|+1)\left(\left|(q_I-q_0)^{\alpha} f^{(I,\alpha)}(q_0)\right|+\left|(q_{-I}-q_0)^{\alpha} f^{(I,\alpha)}(q_0)\right|\right),
		\end{split}
	\end{equation}
	where $q_I=x+yI$ and $q_{-I}=x-yI$. Since
	\begin{equation*}
		\sum_{\alpha\in\mathbb{N}^d}\left|(q_I-q_0)^{\alpha} f^{(I,\alpha)}(q_0)\right|\qquad\mbox{and}\qquad\sum_{\alpha\in\mathbb{N}^d}\left|(q_{-I}-q_0)^{\alpha} f^{(I,\alpha)}(q_0)\right|
	\end{equation*}
	are convergent, it follows from \eqref{eq-faq0} that
	\begin{equation*}
		\sum_{\alpha\in\mathbb{N}^d}\left|(q-q_0)^{*\alpha} f^{(\alpha)}(q_0)\right|\qquad\mbox{and}\qquad \sum_{\alpha\in\mathbb{N}^d}(q-q_0)^{*\alpha} f^{(\alpha)}(q_0)
	\end{equation*}
	are also convergent.
	
	By Proposition \ref{pr-lpn}, the function $g:\widetilde{P}(q_0,r)\rightarrow\mathbb{R}^{2n}$ defined by
	\begin{equation*}
		g(q):=\sum_{\alpha\in\mathbb{N}^d}(q-q_0)^{*\alpha} f^{(\alpha)}(q_0)
	\end{equation*}
	is a weak slice regular function. Note that $f(q)$ and $g(q)$ are weak slice regular and
	\begin{equation*}
		f=g,\qquad\mbox{on}\qquad P_I(q_0,r),
	\end{equation*}
	thus, by the Identity Principle \ref{tm-ip} we deduce $f=g$ on the slice-connected component of $\widetilde{P}(q_0,r)$ including $q_0$, i.e. \eqref{eq-fqsa} holds.
\end{proof}


\begin{thebibliography}{99}
\bibitem{Albert1949001}
Abraham.~A. Albert.
\newblock On the right alternative algebras.
\newblock {\em Ann. of Math. (2)}, 50:318--328, 1949.

\bibitem{Alpay2015001}
Daniel Alpay, Fabrizio Colombo, Jonathan Gantner, and Irene Sabadini.
\newblock A new resolvent equation for the {$S$}-functional calculus.
\newblock {\em J. Geom. Anal.}, 25(3):1939--1968, 2015.

\bibitem{Alpay2012001}
Daniel Alpay, Fabrizio Colombo, and Irene Sabadini.
\newblock Schur functions and their realizations in the slice hyperholomorphic
  setting.
\newblock {\em Integral Equations Operator Theory}, 72(2):253--289, 2012.

\bibitem{Alpay2016001B}
Daniel Alpay, Fabrizio Colombo, and Irene Sabadini.
\newblock {\em Slice hyperholomorphic {S}chur analysis}, volume 256 of {\em
  Operator Theory: Advances and Applications}.
\newblock Birkh\"auser/Springer, Cham, 2016.

\bibitem{MR3967697}
Fabrizio Colombo and Jonathan Gantner.
\newblock {\em Quaternionic closed operators, fractional powers and fractional
  diffusion processes}, volume 274 of {\em Operator Theory: Advances and
  Applications}.
\newblock Birkh\"{a}user/Springer, Cham, 2019.

\bibitem{MR3887616}
Fabrizio Colombo, Jonathan Gantner, and David~P. Kimsey.
\newblock {\em Spectral theory on the {S}-spectrum for quaternionic operators},
  volume 270 of {\em Operator Theory: Advances and Applications}.
\newblock Birkh\"{a}user/Springer, Cham, 2018.

\bibitem{Colombo2009001}
Fabrizio Colombo, Graziano Gentili, Irene Sabadini, and Daniele Struppa.
\newblock Extension results for slice regular functions of a quaternionic
  variable.
\newblock {\em Adv. Math.}, 222(5):1793--1808, 2009.

\bibitem{Colombo2015001}
Fabrizio Colombo, Roman L\'avicka, Irene Sabadini, and Vladim\'ir Soucek.
\newblock The {R}adon transform between monogenic and generalized slice
  monogenic functions.
\newblock {\em Math. Ann.}, 363(3-4):733--752, 2015.

\bibitem{Colombo2009Structure}
Fabrizio Colombo and Irene Sabadini.
\newblock A structure formula for slice monogenic functions and some of its
  consequences.
\newblock In {\em Hypercomplex analysis}, Trends Math., pages 101--114.
  Birkh\"{a}user Verlag, Basel, 2009.

\bibitem{MR2089988}
Fabrizio Colombo, Irene Sabadini, Franciscus Sommen, and Daniele~C. Struppa.
\newblock {\em Analysis of {D}irac systems and computational algebra},
  volume~39 of {\em Progress in Mathematical Physics}.
\newblock Birkh\"{a}user Boston, Inc., Boston, MA, 2004.

\bibitem{Colombo2009002}
Fabrizio Colombo, Irene Sabadini, and Daniele~C. Struppa.
\newblock Slice monogenic functions.
\newblock {\em Israel J. Math.}, 171:385--403, 2009.

\bibitem{Colombo2010003}
Fabrizio Colombo, Irene Sabadini, and Daniele~C. Struppa.
\newblock Duality theorems for slice hyperholomorphic functions.
\newblock {\em J. Reine Angew. Math.}, 645:85--105, 2010.

\bibitem{Colombo2011001B}
Fabrizio Colombo, Irene Sabadini, and Daniele~C. Struppa.
\newblock {\em Noncommutative functional calculus}, volume 289 of {\em Progress
  in Mathematics}.
\newblock Birkh\"auser/Springer Basel AG, Basel, 2011.
\newblock Theory and applications of slice hyperholomorphic functions.

\bibitem{Colombo2012002}
Fabrizio Colombo, Irene Sabadini, and Daniele~C. Struppa.
\newblock Algebraic properties of the module of slice regular functions in
  several quaternionic variables.
\newblock {\em Indiana Univ. Math. J.}, 61(4):1581--1602, 2012.

\bibitem{Cullen1965001}
Charles.~G. Cullen.
\newblock An integral theorem for analytic intrinsic functions on quaternions.
\newblock {\em Duke Math. J.}, 32:139--148, 1965.

\bibitem{DentoniSce}
Paolo Dentoni and Michele Sce.
\newblock Funzioni regolari nell'algebra di {C}ayley.
\newblock {\em Rend. Sem. Mat. Univ. Padova}, 50:251--267 (1974), 1973.

\bibitem{Dou2020001}
Xinyuan Dou, Guangbin Ren, and Irene Sabadini.
\newblock Extension theorem and representation formula in non-axially-symmetric
  domains for slice regular functions.
\newblock {\em J. Eur. Math. Soc. (JEMS)}, 25(9):3665--3694, 2023.

\bibitem{Dou2020004}
Xinyuan Dou, Guangbin Ren, and Irene Sabadini.
\newblock A representation formula for slice regular functions over slice-cones
  in several variables.
\newblock {\em Ann. Mat. Pura Appl. (4)}, 202(5):2421--2446, 2023.

\bibitem{Dou2020003}
Xinyuan Dou, Guangbin Ren, Irene Sabadini, and Ting Yang.
\newblock Weak slice regular functions on the {$n$}-dimensional quadratic cone
  of octonions.
\newblock {\em J. Geom. Anal.}, 31(11):11312--11337, 2021.

\bibitem{Fueter1934001}
Rudolf Fueter.
\newblock Die {F}unktionentheorie der {D}ifferentialgleichungen {$\Delta u=0$}
  und {$\Delta\Delta u=0$} mit vier reellen {V}ariablen.
\newblock {\em Comment. Math. Helv.}, 7(1):307--330, 1934.

\bibitem{Gentili2014001}
Graziano Gentili, Simon Salamon, and Caterina Stoppato.
\newblock Twistor transforms of quaternionic functions and orthogonal complex
  structures.
\newblock {\em J. Eur. Math. Soc. (JEMS)}, 16(11):2323--2353, 2014.

\bibitem{Gentili2012001}
Graziano Gentili and Caterina Stoppato.
\newblock Power series and analyticity over the quaternions.
\newblock {\em Math. Ann.}, 352(1):113--131, 2012.

\bibitem{Gentili2013001B}
Graziano Gentili, Caterina Stoppato, and Daniele~C. Struppa.
\newblock {\em Regular functions of a quaternionic variable}.
\newblock Springer Monographs in Mathematics. Springer, Heidelberg, 2013.

\bibitem{Gentili2006001}
Graziano Gentili and Daniele~C. Struppa.
\newblock A new approach to {C}ullen-regular functions of a quaternionic
  variable.
\newblock {\em C. R. Math. Acad. Sci. Paris}, 342(10):741--744, 2006.

\bibitem{Gentili2007001}
Graziano Gentili and Daniele~C. Struppa.
\newblock A new theory of regular functions of a quaternionic variable.
\newblock {\em Adv. Math.}, 216(1):279--301, 2007.

\bibitem{Gentili2010001}
Graziano Gentili and Daniele~C. Struppa.
\newblock Regular functions on the space of {C}ayley numbers.
\newblock {\em Rocky Mountain J. Math.}, 40(1):225--241, 2010.

\bibitem{Ghiloni2011001}
Riccardo Ghiloni and Alessandro Perotti.
\newblock Slice regular functions on real alternative algebras.
\newblock {\em Adv. Math.}, 226(2):1662--1691, 2011.

\bibitem{Ghiloni2012001}
Riccardo Ghiloni and Alessandro Perotti.
\newblock Slice regular functions of several {C}lifford variables.
\newblock {\em AIP Conference Proceedings}, 1493, 734, 2012.

\bibitem{Ghiloni2020001}
Riccardo Ghiloni and Alessandro Perotti.
\newblock Slice regular functions in several variables.
\newblock {\em Math. Z.}, 302(1):295--351, 2022.

\bibitem{Ghiloni20171002}
Riccardo Ghiloni, Alessandro Perotti, and Caterina Stoppato.
\newblock The algebra of slice functions.
\newblock {\em Trans. Amer. Math. Soc.}, 369(7):4725--4762, 2017.

\bibitem{MR2369875}
Klaus G\"{u}rlebeck, Klaus Habetha, and Wolfgang Spr\"{o}\ss~ig.
\newblock {\em Holomorphic functions in the plane and {$n$}-dimensional space}.
\newblock Birkh\"{a}user Verlag, Basel, 2008.
\newblock Translated from the 2006 German original, With 1 CD-ROM (Windows and
  UNIX).

\bibitem{MR1096955}
Klaus G\"{u}rlebeck and Wolfgang Spr\"{o}ssig.
\newblock {\em Quaternionic analysis and elliptic boundary value problems},
  volume~89 of {\em International Series of Numerical Mathematics}.
\newblock Birkh\"{a}user Verlag, Basel, 1990.

\bibitem{Ren2017001}
Guangbin Ren and Xieping Wang.
\newblock Growth and distortion theorems for slice monogenic functions.
\newblock {\em Pacific J. Math.}, 290(1):169--198, 2017.

\bibitem{Ren2017002}
Guangbin Ren and Xieping Wang.
\newblock Julia theory for slice regular functions.
\newblock {\em Trans. Amer. Math. Soc.}, 369(2):861--885, 2017.

\bibitem{Ren2020001}
Guangbin Ren and Ting Yang.
\newblock Slice regular functions of several octonionic variables.
\newblock {\em Math. Methods Appl. Sci.}, 43(9):6031--6042, 2020.

\bibitem{Rinehart1960001}
Robert~F. Rinehart.
\newblock Elements of a theory of intrinsic functions on algebras.
\newblock {\em Duke Math. J}, 27:1--19, 1960.

\bibitem{Sce}
Michele Sce.
\newblock Osservazioni sulle serie di potenze nei moduli quadratici.
\newblock {\em Atti Accad. Naz. Lincei Rend. Cl. Sci. Fis. Mat. Nat. (8)},
  23:220--225, 1957.

\bibitem{Stoppato2012001}
Caterina Stoppato.
\newblock A new series expansion for slice regular functions.
\newblock {\em Adv. Math.}, 231(3-4):1401--1416, 2012.

\bibitem{Sudbery1979001}
Anthony Sudbery.
\newblock Quaternionic analysis.
\newblock {\em Math. Proc. Cambridge Philos. Soc.}, 85(2):199--224, 1979.

\bibitem{Wang2017001}
Xieping Wang and Guangbin Ren.
\newblock Boundary {S}chwarz lemma for holomorphic self-mappings of strongly
  pseudoconvex domains.
\newblock {\em Complex Anal. Oper. Theory}, 11(2):345--358, 2017.

\bibitem{Xu2018001}
Zhenghua Xu and Guangbin Ren.
\newblock Slice starlike functions over quaternions.
\newblock {\em J. Geom. Anal.}, 28(4):3775--3806, 2018.


\end{thebibliography}

\end{document}